\crefname{hypothesis}{Hypothesis}{Hypotheses}
\title{Parameter-robust preconditioner for Stokes-Darcy coupled problem with Lagrange multiplier\thanks{Submitted to the editors DATE.
\funding{MK acknowledges support from the Research Council of Norway (NFR) grant \#303362.
KAM acknowledges funding and support from Stiftelsen
Kristian Gerhard Jebsen via the K. G. Jebsen Centre for Brain Fluid
Research, the Research Council of Norway  \#300305 (SciML) and \#301013 (Alzheimer´s physics) and the European Research Council under grant  101141807 (aCleanBrain).
}}}
\author{Xiaozhe Hu\thanks{Department of Mathematics, Tufts University, Medford, MA 02155, USA
  (\email{xiaozhe.hu@tufts.edu}).}
\and Miroslav Kuchta\thanks{Department of Numerical Analysis and Scientific Computing, Simula Research Laboratory, 0164 Oslo, Norway
  (\email{miroslav@simula.no}).}
\and Kent-Andre Mardal\thanks{Simula Research Laboratory, 0164 Oslo, Norway, and Department of Mathematics, University of Oslo, Blindern, 0316 Oslo, Norway
  (\email{kent-and@uio.no})}
\and Xue Wang \thanks{School of Mathematics, Shandong University, 250100 Jinan, China}
  (\email{wangxsdu@mail.sdu.edu.cn})
  }
\definecolor{darkgreen}{rgb}{0, .6, 0}
\newtheorem{example}{Example}[section]
\begin{document}
\maketitle
\numberwithin{equation}{section}

\begin{abstract}
    In this paper, we propose a parameter-robust preconditioner for the coupled Stokes-Darcy problem equipped with various boundary conditions, enforcing the mass conservation at the interface via a Lagrange multiplier. We rigorously establish that the coupled system is well-posed with respect to physical parameters and mesh size and provide a framework for constructing parameter-robust preconditioners. Furthermore, we analyze the convergence behavior of the Minimal Residual method in the presence of small outlier eigenvalues linked to specific boundary conditions,  which can lead to slow convergence or stagnation.  To address this issue, we employ deflation techniques to accelerate the convergence. Finally, numerical experiments confirm the effectiveness and robustness of the proposed approach.
\end{abstract}

\begin{keywords}
  Stokes-Darcy problem; Lagrange multiplier; preconditioning; parameter-robust solver; deflation.
\end{keywords}

\begin{AMS}
  65F08, 65F10, 65M60, 65N12, 65N55
\end{AMS}

\section{Introduction}
This paper focuse on the preconditioning of a monolithic scheme for the coupled Stokes-Darcy flow problem, which models the interaction between free fluid and porous media flows. It has been extensively studied in recent years due to its numerous applications, such as industrial filtration, water/gas management in fuel cells, and surface–subsurface interactions \cite{cao2010coupled,ervin2009coupled}.  The classical set of coupling conditions contain continuity of normal velocity and balance of normal and tangential forces, with the latter being described by the Beavers-Joseph-Saffman condition, see \cite{Beavers1967BoundaryCA,Saffman1971On}.

In this paper, we focus on the coupled, linear, stationary Stokes-Darcy model and the formulation where the coupling is enforced using Lagrange multiplier on the interface \cite{layton2002coupling}. Different formulations of the Stokes-Darcy problem have been analyzed in the literature, e.g. the formulation with Darcy problem in the primal form \cite{discacciati2004domain} or fully mixed formulation \cite{gatica2011analysis}. The formulations have been targeted by discretization schemes, for example,  finite volume method \cite{fetzer2017conditions,masson2016coupling,schneider2020coupling}, (mixed) finite element method \cite{CAUCAO2017943,CaucaoGaticaOyarzúaŠebestová,rui2009unified}, and finite difference method \cite{shiue2018convergence,greif2023block,rui2020mac}. Iterative methods for the resulting linear systems have been developed including domain decomposition approaches \cite{discacciati2007robin,vassilev2014domain, boon2020parameter}, multilevel methods \cite{mu2007two, luo2017uzawa} or block preconditioners \cite{boon2022robust,Holter2020robust, Strohbeck2023Robust,Strohbeck2024Efficient}. 
In particular, Boon et al. \cite{boon2022robust} developed robust monolithic solvers for the coupled primal Stokes–Darcy problem, assuming that the interface is in contact with portions of the external boundary where Neumann conditions are imposed on both subdomains. However, the authors also noted important differences in the norm of the Lagrange multiplier at the interface compared to the mixed-form Stokes–Darcy problem studied in \cite{Holter2020robust}, and extensive numerical tests were presented to support this observation.

To our knowledge, an optimal monolithic preconditioner, designed using the operator preconditioning framework and robust with respect to both physical and discretization parameters, has not been accomplished for the Lagrange multiplier formulation of the coupled Stokes-Darcy problem. In this work,  we rigorously establish the well-posedness of the problem, paying in particular special attention to the boundary conditions of the subproblems (cf. numerical experiments in \cite{Holter2020robust}).
As will become evident the coupled problem is not well-posed uniformly with respect to the parameters in certain boundary condition configurations. These cases are characterized by existence of a near-kernel space. In turn, in our analysis, we examine the inf-sup condition, with particular focus on how the inf-sup constant depends on the physical parameters, the entire solution space, and the complement of the near-kernel space.

Using stability estimates from the continuous problem, we will analyze the convergence of the Minimal Residual (MINRES) method. It is well-known that the convergence of such a Krylov subspace iterative method is closely related to the spectral properties of the system matrix~\cite{barrett1994templates,saad2003iterative, nielsen2013analysis,simoncini2013superlinear}. In particular, achieving fast convergence requires clustered eigenvalues that are bounded away from zero. However, in near-kernel cases of the Stokes-Darcy problem, the MINRES method exhibits slow convergence or even stagnation, meaning the residual remains nearly unchanged over a substantial number of iterations, depending on the physical parameter values.  To better understand this behavior, we analyze the convergence of the MINRES method in the presence of a ``bad'' or slow-to-converge eigenvalue of the preconditioned matrix. Building on this analysis, we employ the deflation technique \cite{gutknecht2012spectral,soodhalter2020surveysubspacerecyclingiterative}, which is commonly used to shift isolated clusters of small eigenvalues, resulting in a tighter eigenvalue distribution and a reduced condition number.  In connection with the Stokes flow,  \cite{Dumitrasc_2024} applied deflation preconditioners to speed up the slow convergence of standard block preconditioners when the problem domain is elongated, see also \cite{dobrowolski2003lbb}.

The remainder of the paper is organized as follows. In \Cref{sec2:model}, we introduce some notations and present the Stokes-Darcy coupled model, its weak formulation, and the abstract framework for establishing  well-posedness. The well-posedness analysis of the coupled Stokes-Darcy problem is carried out in \Cref{sec3:well-posed}, where we furthermore develop the parameter-robust preconditioner with respect to both physical and discretization parameters. In \Cref{sec4:discrete analysis}, we analyze the behavior of the MINRES method and make use of the deflation technique to accelerate convergence . Finally, we provide conclusions in \Cref{sec:conclusions}.

\section{Preliminaries and  Model}\label{sec2:model}
In this section, we first introduce the required notation, and then present the Stokes-Darcy model and derive weak formulation of the problem. Additionally, we outline the abstract framework proposed in \cite{Holter2020robust} for analyzing the well-posedness of general coupled problems, emphasizing the key assumptions and functional settings required to ensure stability and solvability of the system.

\subsection{Preliminaries}
Throughout the paper, we use boldface symbols to represent vector fields and their corresponding spaces, while scalar fields and their spaces are written in regular font.

For a domain $\Omega_i\subset \mathbb{R}^d,  d\in\{2, 3\}$, the Sobolev space $H^m(\Omega_i) = W^{2,m}(\Omega_i)$ is defined in the usual way, with the norm $\|\cdot\|_{m,i}$ and seminorm $|\cdot|_{m,i}$, for any $i = \{S,D\}$. When $m=0$, we identify $H^{0}(\Omega_i) = L^2(\Omega_i)$ and use $\|\cdot\|_i$ for $\|\cdot\|_{0,i}$, with associated inner product denoted by $(\cdot, \cdot)_i$.  For a general Hilbert space $V$, we denote its norm by $\lVert \cdot \rVert_V$. For the $L^2$ space, we use the simplified notation $\lVert\cdot\rVert$ for the norm and $(\cdot, \cdot)$ for the corresponding inner product. To emphasize the domain $\Omega$, we may explicitly write $(\cdot, \cdot)_{\Omega}$ and $\|\cdot\|_{\Omega}$. Finally, function spaces consisting of functions with mean value zero are represented as quotient spaces, e.g., $L^2(\Omega)/\mathbb{R}$ denotes the space of $L^2$ functions on $\Omega$ with zero mean. 

If $X, Y$ are Sobolev spaces and $\alpha$ is an arbitrary positive real number, we define the weighted space $\alpha X$ as the space $X$ equipped with the norm $\alpha\|\cdot\|_X$. The intersection $X \cap Y$ and sum $X+Y$ form Banach spaces with norms given by
$$
\|u\|_{X \cap Y}=\sqrt{\|u\|_X^2+\|u\|_Y^2}, \quad \text { and } \quad\|u\|_{X+Y}=\inf _{\substack{x+y=u \\ x \in X, y \in Y}} \sqrt{\|x\|_X^2+\|y\|_Y^2}.
$$
The dual space of the Hilbert space $X$ is denoted by $X^{'}$  and $\mathcal{L}(X,Y)$ denotes the space of bounded linear operators mapping from $X$ to $Y$.  

For $\Gamma$ a subset of $\partial \Omega$, we define $H_{00}^{1 / 2}(\Gamma)$ as the space of all $w \in$ $H^{1 / 2}(\Gamma)$ whose extension by zero to $\partial \Omega$ belongs to $H^{1 / 2}(\partial \Omega)$. We also define $H_{00}^{-1 / 2}(\Gamma)$ to be the dual of $H_{00}^{1 / 2}(\Gamma)$. For the details on fractional and trace spaces, we refer to \cite{juan2007nonmathcing}. 

Throughout this paper, the constant $C$,  with or without subscripts,  represents a constant independent of the material and discrete parameters.  Additionally, we use the notation  $a\lesssim b$ $(a\gtrsim b)$ to indicate $a\leq Cb$ $(a\geq Cb)$ for some constant $C$.

\subsection{Model}
Let $\Omega = \Omega_S\cup\Omega_D\subset\mathbb{R}^d, d\in\{2,3\}$ be an open, bounded domain, with $\Omega_S$, $\Omega_D$ as two non-overlapping Lipschitz subdomains sharing a common interface $\Gamma = \partial\Omega_S\cap\partial\Omega_D\subset\mathbb{R}^{d-1}$. We define the outer boundaries $\Gamma_S = \partial\Omega_S\setminus\Gamma$ and $\Gamma_D = \partial\Omega_D\setminus\Gamma$, and assume that $\lvert \Gamma_S \rvert>0$ and $\lvert \Gamma_D \rvert>0$. The subscripts $S$ and $D$ are used throughout this work to denote entities related to Stokes and Darcy domains, respectively. Let $\mathbf{n}_i$ denote the outward unit vector normal to $\partial\Omega_i$, $i = S, D$. Consider the Stokes problem in the free fluid domain $\Omega_S$,
\begin{equation}\label{Stokes-model}
\begin{cases}
-\nabla\cdot\bm{\sigma}(\mathbf{u}_S,p_S)=\bm{f}_S, \quad \nabla \cdot \mathbf{u}_S = 0, & \text { in } \Omega_S, \\
\mathbf{u}_S=\mathbf{0}, & \text { on } \Gamma_S^E, \\
-\bm{\sigma}(\mathbf{u}_S,p_S)\cdot\mathbf{n}_S = \mathbf{0}, & \text { on } \Gamma_S^N.
\end{cases}
\end{equation}
Here,  the unknowns $\mathbf{u}_S, p_S$ are the velocity and pressure, respectively. $\bm{\sigma}(\mathbf{u}_S,p_S) = 2 \mu \bm{\epsilon}\left(\mathbf{u}_S\right)- p_S I $ denotes the stress tensor where $\bm\epsilon(\mathbf{u}_S)=\frac{1}{2}\left(\nabla \mathbf{u}_S+\nabla \mathbf{u}_S^T\right)$ is the deformation rate tensor and $\mu>0$ denotes the fluid viscosity. The source term of the momentum equation is denoted as $\bm{f}_S$.

In the porous medium $\Omega_D$, the fluid pressure $p_D$ and velocity $\mathbf{u}_D$ satisfy the Darcy's law
\begin{equation}\label{Darcy-model}
\begin{cases}
\mathbf{K}^{-1} \mathbf{u}_D+\nabla p_D = \mathbf{0}, \quad
\nabla \cdot \mathbf{u}_D = g_D, & \text { in } \Omega_D, \\
\mathbf{u}_D \cdot \mathbf{n}_D = 0, & \text { on } \Gamma_D^E, \\
p_D = 0, & \text { on } \Gamma_D^N.
\end{cases}
\end{equation}
In this setting, $\mathbf{K}$ is the hydraulic conductivity of the porous medium, which is a positive-definite tensor, and $g_D$ is the given source term. For simplicity, we let $\mathbf{K} = K\mathbf{I}$, where $K$ is a scalar constant, and assume throughout the paper that all the material parameters are constant.

The problems \eqref{Stokes-model}-\eqref{Darcy-model} are coupled by the following interface conditions
\begin{subequations}\label{interface}
\begin{alignat}{2}
\mathbf{u}_S \cdot \mathbf{n}_S+\mathbf{u}_D \cdot \mathbf{n}_D & = 0, \quad & \text{ on } \Gamma, \label{interface-1}\\
-2 \mu \bm{\epsilon}\left(\mathbf{u}_S\right) \mathbf{n}_S \cdot \mathbf{n}_S + p_S & = p_D,  \quad & \text{ on } \Gamma, \label{interface-2} \\
-2 \mu \bm{\epsilon}\left(\mathbf{u}_S\right) \mathbf{n}_S \cdot \bm{\tau} & = \beta_\tau\mathbf{u}_S \cdot \bm{\tau}, \quad & \text{ on } \Gamma. \label{interface-3}
\end{alignat}
\end{subequations}
On the interface $\Gamma$, we have $\mathbf{n}_S=-\mathbf{n}_D$, and $\bm{\tau} = \mathbf{I} - (\mathbf{n}_S\otimes\mathbf{n}_S)$ denotes a tangential projection onto $\Gamma$. We also define $\beta_\tau = \alpha_{\text{BJS}}\frac{\mu}{\sqrt{\bm{\tau}\cdot\bm{\kappa}\bm{\tau}}}$, where $\bm{\kappa} = \mu\mathbf{K}$ is the permeability, and $\alpha_{\text{BJS}}>0$ is the Beavers-Joseph-Saffman (BJS) slip coefficient. As for the physical meaning of the interface conditions, the first interface condition represents the mass conservation across $\Gamma$, the second one represents the balance of normal forces, and the third condition is the BJS condition \cite{Beavers1967BoundaryCA,Saffman1971On}. 

On the outer boundaries, we consider the decomposition $\Gamma_S = \Gamma_S^E \cup \Gamma_S^N$, and $\Gamma_D = \Gamma_D^E \cup \Gamma_D^N$, where the superscript distinguishes essential $(E)$ and natural $(N)$ boundary conditions. Specifically, on $\Gamma_S^E$ and $\Gamma_D^E$, we prescribe the velocity $\mathbf{u}_S$ and normal velocity $\mathbf{u}_D\cdot\mathbf{n}_D$, respectively. On $\Gamma_S^N$ and $\Gamma_D^N$, we prescribe the traction $\bm{\sigma}\cdot\mathbf{n}_S$ and the pressure, $p_D$, respectively. To avoid the rigid body motion, we assume that $|\Gamma_S^{E}|>0$.

\subsection{Weak formulation}
To present the weak form of the coupled problem, let us define the appropriate weighted Sobolev spaces. Let the velocity and pressure spaces on $\Omega_S$ equipped with $\Gamma_S^E,\, \Gamma_S^N \neq \emptyset$ be
\begin{align}\label{Stokes-spaces}
   & \mathbf{V}_S := \sqrt{\mu}\,\mathbf{H}_{0,E}^1(\Omega_S) \cap \sqrt{\beta_{\tau}}\mathbf{L}_t^2(\Gamma), \quad Q_S := \frac{1}{\sqrt{\mu}}L^2(\Omega_S),
\end{align}
where 
  $\mathbf{H}_{0,E}^1(\Omega_S) := \left\{\mathbf{v}\in[H^1(\Omega_S)]^d \, | \ \mathbf{v} = 0 \text{ on } \Gamma_S^E \right\}$ and $\mathbf{L}^2_t(\Gamma) = \{\, \mathbf{u} \in [L^2(\Gamma)]^d \, | \ \mathbf{u} \cdot \bm{\tau} \in L^2(\Gamma) \,\}$.
The velocity-pressure spaces on $\Omega_D$ are 
\begin{align}\label{Darcy-spaces}
   & \mathbf{V}_D := \frac{1}{\sqrt{K}} \mathbf{H}_{0,E}(\operatorname{div};\Omega_D), \quad Q_D := \sqrt{K}L^2(\Omega_D),
\end{align}
where $\mathbf{H}_{0,E}(\operatorname{div};\Omega_D) := \left\{\mathbf{v}\in [L^2(\Omega_D)]^d | \ \nabla\cdot\mathbf{v}\in L^2(\Omega_D), \  \mathbf{v}\cdot\mathbf{n}_D = 0 \text{ on } \Gamma_D^E \right\}$. 
In addition, let $\mathbf{V} = \mathbf{V}_S\times\mathbf{V}_D$ and $Q = Q_S\times Q_D$. For any $\mathbf{v}\in\mathbf{V},\, q\in Q$, we denote $\mathbf{v}_i = \mathbf{v}|_{\Omega_i},\, q_i = q|_{\Omega_i},\, i = S,D$. In particular, if we consider the coupled problem with pure Dirichlet/essential boundary conditions, that is, $\Gamma_S^N = \Gamma_D^N = \emptyset$, then the pressure space should be redefined as
$
    Q/\mathbb{R} = \left\{q=(q_S,q_D)\in Q_S\times Q_D| \int_{\Omega} q \mathrm{~d}x = 0
    \right\}.
$
Following \cite{Holter2020robust}, the weighted norms of $\mathbf{V}$ and $Q$ are given by
\begin{align*}
     \|\mathbf{v}\|_{\mathbf{V}} &  := \left(\|\mathbf{v}_S\|_{\mathbf{V}_S}^2  +\|\mathbf{v}_D\|_{\mathbf{V}_D}^2\right)^{1/2}, \quad 
    \|q \|_{Q} := \left(\|q_S\|_{Q_S}^2+ \|q_D \|_{Q_D}^2\right)^{1/2},
\end{align*}
where
\begin{align*}
\|\mathbf{v}_S\|_{\mathbf{V}_S}&:= \left( \mu\|\mathbf{v}_S\|_{1,S}^2 + \beta_{\tau}\|\mathbf{v}_S\cdot\bm{\tau}\|_{\Gamma}^2 \right)^{1/2}, \quad  \|\mathbf{v}_D\|_{\mathbf{V}_D}:= \left( {K}^{-1}\|\mathbf{v}_D\|_{D}^2 + {K}^{-1}\|\nabla\cdot\mathbf{v}_D\|_{D}^2\right)^{1/2}, \\
\|q_S\|_{Q_S} &:= \mu^{-1/2}\|q_S\|_{S}, \quad \|q_D \|_{Q_D} :=  {K}^{1/2}\|q_D\|_{D}.
\end{align*}
Next, we introduce the Lagrange multiplier $\lambda$ to enforce the mass conservation condition \eqref{interface-1} and the balance of stress \eqref{interface-2} on the interface. Specifically, we define $\lambda:=p_D$ on $\Gamma$. Here, $\lambda\in \Lambda$, where the multiplier space $\Lambda$ will be defined later, depending on different boundary conditions configurations.

With the above function spaces, we arrive at the Lagrange multiplier variational formulation: find the $(\mathbf{u}, p, \lambda)\in \mathbf{V}\times Q\times \Lambda$ such that
\begin{subequations}\label{weak-form}
    \begin{alignat}{2}
        a(\mathbf{u},\mathbf{v}) - b(\mathbf{v},(p,\lambda)) & = (\bm{f}_S,\mathbf{v}_S),  \quad && \forall\,\mathbf{v}\in\mathbf{V}, \\
        b(\mathbf{u},(q,\phi)) & = (g_D, q_D), \quad && \forall\, (q,\phi)\in Q\times\Lambda,
    \end{alignat}
\end{subequations}
with the bilinear forms defined as follows,
\begin{align*}
    & a(\mathbf{u},\mathbf{v}) = 2\mu(\bm{\epsilon}(\mathbf{u}_S), \bm{\epsilon}(\mathbf{v}_S))_{S} + \beta_\tau(T_t\mathbf{u}_S, T_t\mathbf{v}_S)_{\Gamma} + {K}^{-1}(\mathbf{u}_D,\mathbf{v}_D)_{D}, \\
    & b(\mathbf{v}, (p,\lambda)) =  ( \nabla \cdot \mathbf{v}_S, p_S)_{S} +(\nabla\cdot\mathbf{v}_D, p_D)_{D} - ( T_n\mathbf{v}_S, \lambda)_{\Gamma} + (T_n\mathbf{v}_D, \lambda )_{\Gamma}.
\end{align*}
Here, $T_t$ is the tangential trace operator and $T_n$ is the normal trace operator such that,
\[
T_t\mathbf{u} = \mathbf{u}\cdot \bm{\tau}, \quad T_n\mathbf{u} = \mathbf{u}\cdot \mathbf{n}_S, \quad \mbox{ on } \Gamma.
\]
To distinguish $T_n\mathbf{u}_S$ and $T_n\mathbf{u}_D$, we introduce $T_S\mathbf{u}_S = T_n\mathbf{u}_S$ and $T_D\mathbf{u}_D =  -T_n\mathbf{u}_D$, which will be used in the following abstract framework.
The weak form \cref{weak-form} induces the following linear operator
\begin{equation}\label{operator-A}
\mathcal{A}=\left(\begin{array}{cc|ccc}
-2\mu\nabla\cdot\bm{\epsilon} + \beta_{\tau} T_t^{\prime} T_t & & -\nabla & & T_S^{\prime} \\ 
& {K}^{-1} I & & -\nabla & T_D^{\prime} \\ 
\hline \nabla \cdot & & & & \\ 
& \nabla \cdot & & & \\ 
T_S & T_D & & &
\end{array}\right).
\end{equation}

\subsection{Abstract framework}\label{sec:abstract}
In this section, we review general framework for the interface coupled multiphysics problems. Given two non-overlapping domains with common interface $\Gamma$, we assume that there are  two subdomain problems, each with boundary conditions on $\Gamma$ enforced in terms of a Lagrange multiplier. Consequently, we are concerned with a pair or saddle-point problems ($i = S, D$) of the form:
find $(\mathbf{u}_i,p_i,\lambda_i)\in \mathbf{V}_i\times Q_i\times \Lambda_i$ such that 
\begin{equation}\label{abstrac-equ}
\left(\begin{array}{ccc}
A_i & B_i^{\prime} & T_i^{\prime} \\
B_i &            &       \\
T_i &            &        \\
\end{array}\right)
\left(\begin{array}{c}
     \mathbf{u}_i \\
      p_i\\
      \lambda_i \\
\end{array}\right)
=\left(\begin{array}{c}
     \mathbf{f}_i \\
      g_i\\
      h_i \\
\end{array}\right)
\in\left(\begin{array}{c}
     \mathbf{V}_i^{\prime} \\
      Q_i^{\prime}\\
      \Lambda_i^{\prime}\\
\end{array}\right).
\end{equation}
According to the Brezzi theory \cite{brezzi1974existence}, the well-posedness of the \cref{abstrac-equ} requires the existence of constants $\alpha_i, C_i, \gamma_i, D_i>0$ such that 
\begin{subequations}\label{Brezzi}
\begin{align}
    & (A_i\mathbf{u}_i,\mathbf{u}_i) \geq \alpha_i\|\mathbf{u}_i\|_{\mathbf{V}_i}^2, \quad \forall\,\mathbf{u}_i\in \mathbf{Z}_i, \label{a-coercivity} \\
    & (A_i\mathbf{u}_i,\mathbf{v}_i) \leq C_i\|\mathbf{u}_i\|_{\mathbf{V}_i} \|\mathbf{v}_i\|_{\mathbf{V}_i}, \quad \forall\,\mathbf{u}_i,\mathbf{v}_i\in \mathbf{V}_i,  \label{a-continuity} \\
    & \sup_{\mathbf{0}\neq\mathbf{u}_i\in \mathbf{V}_i}\frac{(B_i\mathbf{u}_i, q_i) + (T_i\mathbf{u}_i, \phi_i)}{\|\mathbf{u}_i\|_{\mathbf{V}_i}} \geq \gamma_i \left(\|q_i\|_{Q_i}^2 + \|\phi_i\|_{\Lambda_i}^2\right)^{1/2}, \quad \forall\,(q_i,\phi_i) \in Q_i\times\Lambda_i,\label{inf-sup}\\
    & (B_i\mathbf{u}_i,q_i) + (T_i\mathbf{u}_i,\phi_i) \leq D_i \|\mathbf{u}_i\|_{\mathbf{V}_i} \left( \|q_i\|_{Q_i}^2 + \|\phi_i\|_{\Lambda_i}^2  \right)^{1/2}, \quad \forall\,(\mathbf{u}_i, q_i,\phi_i) \in \mathbf{V}_i\times Q_i\times \Lambda_i,\label{b-continuity}
\end{align}
\end{subequations}
where $\mathbf{Z}_i = \left\{ \mathbf{u}_i\in\mathbf{V}_i\ | \ (B_i\mathbf{u}_i, q_i) + (T_i\mathbf{u}_i, \lambda_i) = 0, \quad \forall\,(q_i, \lambda_i)\in Q_i\times \Lambda_i  \right\}$ is the kernel space. We shall consider the following condition, 
\begin{equation}\label{a-coercivity-stronger}
    (A_i\mathbf{u}_i,\mathbf{u}_i) \geq \alpha_i \|\mathbf{u}_i\|_{\mathbf{V}_i}^2, \quad \forall\,\mathbf{u}_i\in\left\{ \mathbf{u}_i\in\mathbf{V}\ | \ (B_i\mathbf{u}_i, q_i) = 0, \ \forall\,q_i\in Q_i  \right\},
\end{equation}
which is stronger than \eqref{a-coercivity}, but more common in single-physics problems with the boundary conditions $T_i\mathbf{u}_i=0$ on $\Gamma$ enforced in the standard way. 
The Brezzi conditions ensure that 
\begin{align}\label{eq:estimate-abstract}
\left\|\mathbf{u}_i\right\|_{\mathbf{V}_i}+\left\|p_i\right\|_{Q_i}+\left\|\lambda_i\right\|_{\Lambda_i} \leq E_i(\left\|\mathbf{f}_i\right\|_{\mathbf{V}_i^{\prime}}+\left\|g_i\right\|_{Q_i^{\prime}}+\left\|h_i\right\|_{\Lambda_i^{\prime}}).
\end{align}
Here, $E_i$ depends only on $\alpha_i, \gamma_i, C_i$ and $D_i$.

We now consider the existence and uniqueness of the coupled problem: find $\left(\mathbf{u}_S, \mathbf{u}_D, p_S, p_D, \lambda\right) \in \mathbf{V}_S \times \mathbf{V}_D \times Q_S \times Q_D \times (\Lambda_S \cap \Lambda_D)$ such that
\begin{equation}\label{abstract-coupled-equ}
\mathcal{A}\left(\begin{array}{c}
\mathbf{u}_S \\
\mathbf{u}_D \\
p_S \\
p_D \\
\lambda
\end{array}\right)=\left(\begin{array}{lllll}
A_S & & B_S^{\prime} & & T_S^{\prime} \\
& A_D & & B_D^{\prime} & T_D^{\prime} \\
B_S & & & & \\
& B_D & & & \\
T_S & T_D & & &
\end{array}\right)\left(\begin{array}{c}
\mathbf{u}_S \\
\mathbf{u}_D \\
p_S \\
p_D \\
\lambda
\end{array}\right)=\left(\begin{array}{c}
\bm{f}_S \\
\bm{f}_D \\
g_S \\
g_D \\
h
\end{array}\right) \in\left(\begin{array}{c}
\mathbf{V}_S^{\prime} \\
\mathbf{V}_D^{\prime} \\
Q_S^{\prime} \\
Q_D^{\prime} \\
\Lambda_S^{\prime}+\Lambda_D^{\prime}
\end{array}\right) .
\end{equation}
We remark that $T_i: \mathbf{V}_i \rightarrow \Lambda_i^{\prime}$ and hence $T_S \mathbf{u}_S+T_D \mathbf{u}_D \in \Lambda_S^{\prime}+\Lambda_D^{\prime}$. Therefore, $\lambda \in \Lambda_S \cap \Lambda_D$ since $\left(\Lambda_S \cap \Lambda_D\right)^{\prime}=\Lambda_S^{\prime}+\Lambda_D^{\prime}$.   Following Theorem 3.1 in \cite{Holter2020robust}, we have the following theorem.
\begin{theorem}[Well-posedness of \eqref{abstract-coupled-equ} ]\label{thm-abstract}
    Suppose the problems \eqref{abstrac-equ} satisfy the Brezzi conditions \eqref{a-continuity}-\eqref{b-continuity} in $\mathbf{V}_i \times Q_i \times \Lambda_i, i=S,D$ and the coercivity condition \eqref{a-coercivity-stronger}. Then the coupled problem \eqref{abstract-coupled-equ} is well posed in $\mathbf{W} = \mathbf{V}_S \times Q_S \times \mathbf{V}_D \times Q_D \times\left(\Lambda_S \cap \Lambda_D\right)$ in the sense that
$$
\|\mathcal{A}\|_{\mathcal{L}\left(\mathbf{W}, \mathbf{W}^{\prime}\right)} \quad \text { and } \quad\left\|\mathcal{A}^{-1}\right\|_{\mathcal{L}\left(\mathbf{W}^{\prime}, \mathbf{W}\right)}
$$
are bounded by some positive constant $C$ depending only on the Brezzi constants of problems \eqref{abstrac-equ}.
\end{theorem}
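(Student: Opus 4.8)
The plan is to recast \eqref{abstract-coupled-equ} as a single (generalized) saddle-point problem and to verify the four Brezzi conditions for it, after which boundedness of $\mathcal{A}$ and of $\mathcal{A}^{-1}$ follows from the classical theory \cite{brezzi1974existence}. I would group the velocities as $\mathbf{u}=(\mathbf{u}_S,\mathbf{u}_D)\in\mathbf{V}=\mathbf{V}_S\times\mathbf{V}_D$ and the remaining unknowns as $(p,\lambda)\in Q\times(\Lambda_S\cap\Lambda_D)$, and write $\mathcal{A}$ in the block form with diagonal operator $A=\operatorname{diag}(A_S,A_D):\mathbf{V}\to\mathbf{V}'$ and constraint operator $\mathcal{B}\mathbf{u}=(B_S\mathbf{u}_S,\,B_D\mathbf{u}_D,\,T_S\mathbf{u}_S+T_D\mathbf{u}_D)$, whose transpose supplies the $B_i'$ and $T_i'$ blocks (note $T_i'\phi\in\mathbf{V}_i'$ is well defined for $\phi\in\Lambda_S\cap\Lambda_D\subset\Lambda_i$). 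Continuity of $A$ on $\mathbf{V}$ is immediate from \eqref{a-continuity} with constant $\max(C_S,C_D)$, and continuity of $\mathcal{B}$ (equivalently of the bilinear form $b$) follows from \eqref{b-continuity} together with the elementary bound $\|\phi\|_{\Lambda_i}\le\|\phi\|_{\Lambda_S\cap\Lambda_D}$, so that the cross terms $(T_i\mathbf{u}_i,\phi)_\Gamma$ are controlled by a fixed multiple of $\max(D_S,D_D)$. These two estimates, together with the structure of $\mathcal{A}$, yield the claimed bound on $\|\mathcal{A}\|_{\mathcal{L}(\mathbf{W},\mathbf{W}')}$.

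The first substantive step is coercivity of $A$ on the coupled kernel $\mathbf{Z}=\{\mathbf{u}\in\mathbf{V}:\mathcal{B}\mathbf{u}=0\}$. If $\mathbf{u}\in\mathbf{Z}$ then in particular $B_S\mathbf{u}_S=0$ and $B_D\mathbf{u}_D=0$, so each $\mathbf{u}_i$ lies in $\{\mathbf{u}_i\in\mathbf{V}_i:(B_i\mathbf{u}_i,q_i)=0\ \forall q_i\in Q_i\}$; hence the strengthened hypothesis \eqref{a-coercivity-stronger} — and not merely \eqref{a-coercivity} — gives $(A_i\mathbf{u}_i,\mathbf{u}_i)\ge\alpha_i\|\mathbf{u}_i\|_{\mathbf{V}_i}^2$, and summing over $i=S,D$ yields $(A\mathbf{u},\mathbf{u})\ge\min(\alpha_S,\alpha_D)\|\mathbf{u}\|_{\mathbf{V}}^2$. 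The point to emphasize is that membership in $\mathbf{Z}$ only forces $T_S\mathbf{u}_S+T_D\mathbf{u}_D=0$ on $\Gamma$, not $T_S\mathbf{u}_S=T_D\mathbf{u}_D=0$ separately, so coercivity on the smaller kernels $\mathbf{Z}_i$ would be useless here; this is precisely why \eqref{a-coercivity-stronger} is assumed.

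The second substantive step is the inf-sup condition for $\mathcal{B}$ on $\mathbf{V}\times\bigl(Q\times(\Lambda_S\cap\Lambda_D)\bigr)$. Fix $(q_S,q_D,\phi)$. Since $\mathbf{V}$ is a Hilbert product and the numerator splits additively, the supremum over $\mathbf{u}=(\mathbf{u}_S,\mathbf{u}_D)$ of $\bigl[(B_S\mathbf{u}_S,q_S)+(T_S\mathbf{u}_S,\phi)+(B_D\mathbf{u}_D,q_D)+(T_D\mathbf{u}_D,\phi)\bigr]/\|\mathbf{u}\|_{\mathbf{V}}$ equals $\bigl(\ell_S^2+\ell_D^2\bigr)^{1/2}$, where $\ell_i$ is the supremum of the $i$-th subproblem quotient (here I use that $\phi\in\Lambda_S\cap\Lambda_D$ is an admissible multiplier test function for both subproblems). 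Applying \eqref{inf-sup} to each subproblem gives $\ell_i\ge\gamma_i\bigl(\|q_i\|_{Q_i}^2+\|\phi\|_{\Lambda_i}^2\bigr)^{1/2}$, hence the supremum is at least $\min(\gamma_S,\gamma_D)\bigl(\|q_S\|_{Q_S}^2+\|q_D\|_{Q_D}^2+\|\phi\|_{\Lambda_S}^2+\|\phi\|_{\Lambda_D}^2\bigr)^{1/2}$, and the defining identity $\|\phi\|_{\Lambda_S}^2+\|\phi\|_{\Lambda_D}^2=\|\phi\|_{\Lambda_S\cap\Lambda_D}^2$ converts the right-hand side into $\min(\gamma_S,\gamma_D)\bigl(\|q\|_Q^2+\|\phi\|_{\Lambda_S\cap\Lambda_D}^2\bigr)^{1/2}$ — the desired coupled inf-sup bound. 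With all four Brezzi conditions established for the coupled problem, \cite{brezzi1974existence} gives that $\mathcal{A}:\mathbf{W}\to\mathbf{W}'$ is an isomorphism together with a stability estimate of the form \eqref{eq:estimate-abstract}, i.e. a bound on $\|\mathcal{A}^{-1}\|_{\mathcal{L}(\mathbf{W}',\mathbf{W})}$ depending only on $\alpha_i,\gamma_i,C_i,D_i$. I expect the main obstacle to be purely bookkeeping: tracking the intersection- and sum-space norms carefully enough that the subproblem Brezzi constants assemble into coupled constants with no hidden dependence on the physical parameters; the algebraic manipulations themselves are routine.
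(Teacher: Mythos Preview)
Your proof is correct and follows the same overall strategy as the paper --- recast the coupled operator as a $2\times 2$ saddle-point system, then verify the four Brezzi conditions --- and your treatment of continuity and coercivity on $\mathbf{Z}$ matches the paper essentially verbatim, including the (correct and important) observation that only \eqref{a-coercivity-stronger}, not \eqref{a-coercivity}, suffices because $\mathbf{Z}$ forces only $T_S\mathbf{u}_S+T_D\mathbf{u}_D=0$.

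The genuine difference is in the inf-sup step. The paper proceeds constructively: given $(q_S,q_D,\phi)$ it \emph{solves} each subproblem \eqref{abstrac-equ} with right-hand side $(\mathbf{0},R_{Q_i}^{-1}q_i,R_{\Lambda_i}^{-1}\phi)$, obtains a test function $\mathbf{u}_i^*$ bounded via the full stability constant $E_i$ from \eqref{eq:estimate-abstract}, and plugs $(\mathbf{u}_S^*,\mathbf{u}_D^*)$ into the supremum. This yields a coupled inf-sup constant $1/E$ with $E=2\max\{E_S,E_D\}^2$, which depends on \emph{all} the subproblem Brezzi constants. Your route is more direct: you exploit the Hilbert product structure of $\mathbf{V}=\mathbf{V}_S\times\mathbf{V}_D$ to identify the coupled supremum with $(\ell_S^2+\ell_D^2)^{1/2}$, then invoke the subproblem inf-sup conditions \eqref{inf-sup} to bound each $\ell_i$. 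This avoids solving anything and delivers the sharper coupled inf-sup constant $\min(\gamma_S,\gamma_D)$. Both arguments are valid; yours is more elementary and gives a better constant, while the paper's construction has the minor conceptual advantage of exhibiting an explicit near-optimal test function, which can be useful if one later wants to identify or approximate it (as the paper in fact does in \Cref{sec:deflation}).
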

\begin{proof}
We verify the Brezzi conditions for \cref{abstract-coupled-equ} in the form
\begin{equation*}
\mathcal{A}=\left(\begin{array}{cc}
A & B^{\prime} \\
B &
\end{array}\right), \text { where } A=\left(\begin{array}{cc}
A_S & \\
& A_D
\end{array}\right) \quad \text { and } \quad B=\left(\begin{array}{cc}
B_S & \\
& B_D \\
T_S & T_D
\end{array}\right).
\end{equation*}
By considering $\mathcal{A}$ as an operator on $\mathbf{V} \times Q$ where $\mathbf{V}=\mathbf{V}_S \times \mathbf{V}_D$ and $Q=$ $Q_S \times Q_D \times\left(\Lambda_S \cap \Lambda_D\right)$. The boundedness of $A$ follows from \cref{a-continuity}  and the Cauchy-Schwarz inequality: for any $\mathbf{u}_S, \mathbf{v}_S \in \mathbf{V}_S, \mathbf{u}_D, \mathbf{v}_D \in \mathbf{V}_D$ we have that
\begin{align*}
\left(A\left(\mathbf{u}_S, \mathbf{u}_D\right),\left(\mathbf{v}_S, \mathbf{v}_D\right)\right) & =\left(A_S \mathbf{u}_S, \mathbf{v}_S\right)+\left(A_D \mathbf{u}_D, \mathbf{v}_D\right) \\
& \leq \max \{C_S, C_D\}\left\|\left(\mathbf{u}_S, \mathbf{u}_D\right)\right\|_{\mathbf{V}_S \times \mathbf{V}_D}\left\|\left(\mathbf{v}_S, \mathbf{v}_D\right)\right\|_{\mathbf{V}_S \times \mathbf{V}_D}.
\end{align*}
Similarly, the boundedness of operator $B$ is derived from \cref{b-continuity}  and the subsequent inequalities:
\begin{align*}
\left(B_S \mathbf{u}_S, q_S\right) & +\left(T_S \mathbf{u}_S, \phi\right)+\left(B_D \mathbf{u}_D, q_D\right)+\left(T_D \mathbf{u}_D, \phi\right) \\
& \leq D_S\left\|\mathbf{u}_S\right\|_{\mathbf{V}_S}\left(\left\|q_S\right\|_{Q_S}^2+\|\phi\|_{\Lambda_S}^2\right)^{\frac{1}{2}}+D_D\left\|\mathbf{u}_D\right\|_{\mathbf{V}_D}\left(\left\|q_D\right\|_{Q_D}^2+\|\phi\|_{\Lambda_D}^2\right)^{\frac{1}{2}} \\
& \leq \max \{D_S, D_D\}\left(\left\|\mathbf{u}_S\right\|_{\mathbf{V}_S}^2+\left\|\mathbf{u}_D\right\|_{\mathbf{V}_D}^2\right)^{\frac{1}{2}}\left(\left\|q_S\right\|_{Q_S}^2+\|\phi\|_{\Lambda_S}^2+\left\|q_D\right\|_{Q_D}^2+\|\phi\|_{\Lambda_D}^2\right)^{\frac{1}{2}} \\
& =\max \{D_S, D_D\}\left\|\left(\mathbf{u}_S, \mathbf{u}_D\right)\right\|_{\mathbf{V}_S \times \mathbf{V}_D}\left(\left\|\left(q_S, q_D\right)\right\|_{Q_S \times Q_D}^2+\|\phi\|_{\Lambda_S \cap \Lambda_D}^2\right)^{\frac{1}{2}},
\end{align*}
for all $\left(\mathbf{u}_S, \mathbf{u}_D, p_S, p_D, \lambda\right) \in \mathbf{V}_S \times \mathbf{V}_D \times Q_S \times Q_D \times\left(\Lambda_S \cap \Lambda_D\right)$, where the second inequality is due to the Cauchy-Schwarz inequality. Hence $A$ and $B$ are both bounded, with boundedness constants depending only on those of the subproblems.

For coercivity, note that because $B\left(\begin{array}{c} \mathbf{u}_S \\ \mathbf{u}_D\end{array}\right)=\left(\begin{array}{c}B_S \mathbf{u}_S \\ B_D \mathbf{u}_D \\ T_S \mathbf{u}_S + T_D \mathbf{u}_D\end{array}\right)$, we have that
\begin{equation*}
\operatorname{ker} B=\left(\operatorname{ker} B_S \times \operatorname{ker} B_D\right) \cap \operatorname{ker}\left(\begin{array}{cc}
T_S & T_D
\end{array}\right) 
\subset \operatorname{ker} B_S \times \operatorname{ker} B_D.
\end{equation*}
Based on the assumption of \eqref{a-coercivity-stronger}, 
for any $\left(\mathbf{u}_S, \mathbf{u}_D\right) \in \operatorname{ker} B$,
\begin{align*}
\left(A\left(\mathbf{u}_S, \mathbf{u}_D\right),\left(\mathbf{u}_S, \mathbf{u}_D\right)\right)_{\mathbf{V}_S \times \mathbf{V}_D} & =\left(A_S \mathbf{u}_S, \mathbf{u}_S\right)_{\mathbf{V}_S}+\left(A_D \mathbf{u}_D, \mathbf{u}_D\right)_{\mathbf{V}_D} \\
& \geq \min \{\alpha_S, \alpha_D\}\left\|\left(\mathbf{u}_S, \mathbf{u}_D\right)\right\|_{\mathbf{V}_S \times \mathbf{V}_D}^2.
\end{align*}
Thus $A$ is coercive on ker $B$ with constant $\min \{\alpha_S, \alpha_D\}$.

Finally, let us verify the inf-sup condition \eqref{inf-sup}. Let $R_{Q_i}^{-1}: Q_i \rightarrow Q_i^{\prime}, R_{\Lambda_i}^{-1}: \Lambda_i \rightarrow \Lambda_i^{\prime}$ be the inverse Riesz maps of their corresponding spaces. By the Riesz Representation Theorem, this is an isometry between $Q_i$ and $Q_i^{\prime}$, meaning that $( R_{Q_i}^{-1} q, q )=\|q\|_{Q_i}^2$.  Given $\left(q_S, q_D, \phi \right) \in Q_S \times Q_D \times (\Lambda_S\cap \Lambda_D)$, let $\mathbf{u}_i^*, p_i^*, \lambda_i^*$ be the solution of
\begin{equation*}
\left(\begin{array}{ccc}
A_i & B_i^{\prime} & T_i^{\prime} \\
B_i & & \\
T_i & &
\end{array}\right)\left(\begin{array}{c}
\mathbf{u}_i^* \\
p_i^* \\
\lambda_i^*
\end{array}\right)=\left(\begin{array}{c}
\mathbf{0} \\
R_{Q_i}^{-1} q_i \\
R_{\Lambda_i}^{-1} \phi
\end{array}\right) \quad \text { for } i=S,D.
\end{equation*}
Considering $\mathbf{u}_i^* = \mathbf{u}_i^*\left(q_i, \phi\right)$ as a function of $q_i, \phi$, by \eqref{eq:estimate-abstract} we have that
\begin{equation*}
\|\mathbf{u}_i^*\|_{\mathbf{V}_i}^2 \leq 2 E_i^2\left(\|R_{Q_i}^{-1} q_i\|_{Q_{i}^{\prime}}^2+\|R_{\Lambda_i}^{-1} \phi\|_{\Lambda_{i}^{\prime}}^2\right)=2 E_i^2\left(\left\|q_i\right\|_{Q_i}^2+\|\phi\|_{\Lambda_i}^2\right),
\end{equation*}
for any $\left(q_i, \phi\right) \in Q_i \times \Lambda_i$.
Furthermore, we have that $\left(B_i \mathbf{u}_i^*, q_i\right)+\left(T_i \mathbf{u}_i^*, \phi\right)=(R_{Q_i}^{-1} q_i, q_i)+(R_{\Lambda_i}^{-1} \phi, \phi)=\left\|q_i\right\|_{Q_i}^2+$ $\|\phi\|_{\Lambda_i}^2$. Then, there holds
\begin{align*}
& \sup_{\left(\mathbf{u}_S, \mathbf{u}_D\right) \in \mathbf{V}_S \times \mathbf{V}_D} \frac{\left(B_S \mathbf{u}_S, q_S\right)+\left(B_D \mathbf{u}_D, q_D\right)+\left(T_S\mathbf{u}_S, \phi\right)+\left(T_D \mathbf{u}_D, \phi\right)}{\left(\left\|\mathbf{u}_S\right\|_{\mathbf{V}_S}^2+\left\|\mathbf{u}_D\right\|_{\mathbf{V}_D}^2\right)^{1/2}} \\
 & \quad\quad \geq \frac{\left(B_S \mathbf{u}_S^*, q_S\right)+\left(B_D \mathbf{u}_D^*, q_D\right)+\left(T_S \mathbf{u}_S^*, \phi\right)+\left(T_D \mathbf{u}_D^*, \phi\right)}{\left(\left\|\mathbf{u}_S^*\right\|_{\mathbf{V}_S}^2+\left\|\mathbf{u}_D^*\right\|_{\mathbf{V}_D}^2\right)^{1 / 2}} \\
 & \quad\quad \geq  \frac{1}{E} \frac{\left\|q_S\right\|_{Q_S}^2+\left\|q_D\right\|_{Q_D}^2+\|\phi\|_{\Lambda_S}^2+\|\phi\|_{\Lambda_D}^2}{\left(\left\|q_S\right\|_{Q_S}^2+\left\|q_D\right\|_{Q_D}^2+\|\phi\|_{\Lambda_S}^2+\|\phi\|_{\Lambda_D}^2\right)^{1 / 2}} \\
&\quad\quad =  \frac{1}{E}\left(\left\|q_S\right\|_{Q_S}^2+\left\|q_D\right\|_{Q_D}^2+\|\phi\|_{\Lambda_S \cap \Lambda_D}^2\right)^{1 / 2},
\end{align*}
where $E=2 \max \{E_S, E_D\}^2$. Hence we obtain the desired inf-sup condition.
\end{proof}

\section{Well-posedness and preconditioning}\label{sec3:well-posed}
In this section, we establish the well-posedness of the coupled Stokes-Darcy problem within the abstract framework of \Cref{thm-abstract}. Our analysis will consider different arrangements of boundary conditions in the coupled problems, labeled as NN, EE, NE(NE$^{*}$), and EN(EN$^{*}$) according to the boundary conditions intersecting the interface, as shown in \cref{fig:bc_config}.  For example, NN implies that $\Gamma_S^N\cap\Gamma \neq \emptyset$ (and $\Gamma_S^E\cap\Gamma=\emptyset$) and $\Gamma_D^N\cap\Gamma \neq \emptyset$ (and $\Gamma_D^E\cap\Gamma=\emptyset$) with the other cases following a similar pattern. This distinction is necessary because the Lagrange multiplier space will vary accordingly.


\begin{figure}
  \setlength{\abovecaptionskip}{0.0625\baselineskip}    
  \setlength{\belowcaptionskip}{0.0625\baselineskip}  
    \centering
\begin{subfigure}{0.45\textwidth}
    \centering
\begin{tikzpicture}[line width=1pt,scale=2]
\draw[black, ultra thick] (0,0) -- (2,0);
\draw[black, ultra thick] (0,0) -- (0,1);
\draw[gray, ultra thick] (1,0) -- (1,1);
\draw[black, ultra thick] (0,1) -- (2,1);
\draw[black, ultra thick] (2,0) -- (2,1);
\filldraw[black] (0.4,-0.15) node[anchor=west]{$\Gamma_S^N$};
\filldraw[black] (0.4,1.15) node[anchor=west]{$\Gamma_S^N$};
\filldraw[black] (-0.4,0.5) node[anchor=west]{$\Gamma_S^E$};
\filldraw[black] (1.4,-0.15) node[anchor=west]{$\Gamma_D^N$};
\filldraw[black] (1.4,1.15) node[anchor=west]{$\Gamma_D^N$};
\filldraw[black] (2.02,0.5) node[anchor=west]{$\Gamma_D^E$};
\filldraw[black] (0.4,0.5) node[anchor=west]{$\Omega_S$};
\filldraw[black] (1.4,0.5) node[anchor=west]{$\Omega_D$};
\end{tikzpicture}
\caption{NN}\label{NN}
\end{subfigure}
\hfill
\begin{subfigure}{0.45\textwidth}
    \centering
\begin{tikzpicture}[line width=1pt,scale=2]
\draw[black, ultra thick] (0,0) -- (2,0);
\draw[black, ultra thick] (0,0) -- (0,1);
\draw[gray, ultra thick] (1,0) -- (1,1);
\draw[black, ultra thick] (0,1) -- (2,1);
\draw[black, ultra thick] (2,0) -- (2,1);
\filldraw[black] (0.4,-0.15) node[anchor=west]{$\Gamma_S^E$};
\filldraw[black] (0.4,1.15) node[anchor=west]{$\Gamma_S^E$};
\filldraw[black] (-0.4,0.5) node[anchor=west]{$\Gamma_S^E$};
\filldraw[black] (1.4,-0.15) node[anchor=west]{$\Gamma_D^E$};
\filldraw[black] (1.4,1.15) node[anchor=west]{$\Gamma_D^E$};
\filldraw[black] (2.02,0.5) node[anchor=west]{$\Gamma_D^E$};
\filldraw[black] (0.4,0.5) node[anchor=west]{$\Omega_S$};
\filldraw[black] (1.4,0.5) node[anchor=west]{$\Omega_D$};
\end{tikzpicture}
\caption{EE}\label{DD}
\end{subfigure}
\begin{subfigure}{0.45\textwidth}
    \centering
\begin{tikzpicture}[line width=1pt,scale=2]
\draw[black, ultra thick] (0,0) -- (2,0);
\draw[black, ultra thick] (0,0) -- (0,1);
\draw[gray, ultra thick] (1,0) -- (1,1);
\draw[black, ultra thick] (0,1) -- (2,1);
\draw[black, ultra thick] (2,0) -- (2,1);
\filldraw[black] (0.4,-0.15) node[anchor=west]{$\Gamma_S^N$};
\filldraw[black] (0.4,1.15) node[anchor=west]{$\Gamma_S^N$};
\filldraw[black] (-0.4,0.5) node[anchor=west]{$\Gamma_S^E$};
\filldraw[black] (1.4,-0.15) node[anchor=west]{$\Gamma_D^E$};
\filldraw[black] (1.4,1.15) node[anchor=west]{$\Gamma_D^E$};
\filldraw[black] (2.02,0.5) node[anchor=west, color=black]{$\Gamma_D^N$};
\filldraw[black] (0.4,0.5) node[anchor=west]{$\Omega_S$};
\filldraw[black] (1.4,0.5) node[anchor=west]{$\Omega_D$};
\end{tikzpicture}
\caption{NE$^*$}\label{NDnoker}
\end{subfigure}
\hfill
\begin{subfigure}{0.45\textwidth}
    \centering
\begin{tikzpicture}[line width=1pt,scale=2]
\draw[black, ultra thick] (0,0) -- (2,0);
\draw[black, ultra thick] (0,0) -- (0,1);
\draw[gray, ultra thick] (1,0) -- (1,1);
\draw[black, ultra thick] (0,1) -- (2,1);
\draw[black, ultra thick] (2,0) -- (2,1);
\filldraw[black] (0.4,-0.15) node[anchor=west]{$\Gamma_S^E$};
\filldraw[black] (0.4,1.15) node[anchor=west]{$\Gamma_S^E$};
\filldraw[black] (-0.4,0.5) node[anchor=west, color=black]{$\Gamma_S^N$};
\filldraw[black] (1.4,-0.15) node[anchor=west]{$\Gamma_D^N$};
\filldraw[black] (1.4,1.15) node[anchor=west]{$\Gamma_D^N$};
\filldraw[black] (2.02,0.5) node[anchor=west]{$\Gamma_D^E$};
\filldraw[black] (0.4,0.5) node[anchor=west]{$\Omega_S$};
\filldraw[black] (1.4,0.5) node[anchor=west]{$\Omega_D$};
\end{tikzpicture}
\caption{EN$^*$}\label{DNnoker}
\end{subfigure}
\begin{subfigure}{0.45\textwidth}
    \centering
\begin{tikzpicture}[line width=1pt,scale=2]
\draw[black, ultra thick] (0,0) -- (2,0);
\draw[black, ultra thick] (0,0) -- (0,1);
\draw[gray, ultra thick] (1,0) -- (1,1);
\draw[black, ultra thick] (0,1) -- (2,1);
\draw[black, ultra thick] (2,0) -- (2,1);
\filldraw[black] (0.4,-0.15) node[anchor=west]{$\Gamma_S^N$};
\filldraw[black] (0.4,1.15) node[anchor=west]{$\Gamma_S^N$};
\filldraw[black] (-0.4,0.5) node[anchor=west]{$\Gamma_S^E$};
\filldraw[black] (1.4,-0.15) node[anchor=west]{$\Gamma_D^E$};
\filldraw[black] (1.4,1.15) node[anchor=west]{$\Gamma_D^E$};
\filldraw[black] (2.02,0.5) node[anchor=west, color=red]{$\Gamma_D^E$};
\filldraw[black] (0.4,0.5) node[anchor=west]{$\Omega_S$};
\filldraw[black] (1.4,0.5) node[anchor=west]{$\Omega_D$};
\end{tikzpicture}
\caption{NE}\label{ND}
\end{subfigure}
\hfill
\begin{subfigure}{0.45\textwidth}
    \centering
\begin{tikzpicture}[line width=1pt,scale=2]
\draw[black, ultra thick] (0,0) -- (2,0);
\draw[black, ultra thick] (0,0) -- (0,1);
\draw[gray, ultra thick] (1,0) -- (1,1);
\draw[black, ultra thick] (0,1) -- (2,1);
\draw[black, ultra thick] (2,0) -- (2,1);
\filldraw[black] (0.4,-0.15) node[anchor=west]{$\Gamma_S^E$};
\filldraw[black] (0.4,1.15) node[anchor=west]{$\Gamma_S^E$};
\filldraw[black] (-0.4,0.5) node[anchor=west, color=red]{$\Gamma_S^E$};
\filldraw[black] (1.4,-0.15) node[anchor=west]{$\Gamma_D^N$};
\filldraw[black] (1.4,1.15) node[anchor=west]{$\Gamma_D^N$};
\filldraw[black] (2.02,0.5) node[anchor=west]{$\Gamma_D^E$};
\filldraw[black] (0.4,0.5) node[anchor=west]{$\Omega_S$};
\filldraw[black] (1.4,0.5) node[anchor=west]{$\Omega_D$};
\end{tikzpicture}
\caption{EN}\label{DN}
\end{subfigure}
\caption{Different configurations of boundary conditions considered with the coupled Stokes-Darcy problem. (a)-(d) do not lead to near kernel. Highlighted in color is the change of boundary conditions in (c) and (d), which yields a near kernel in cases (e) and (f).
}
\label{fig:bc_config}
\end{figure}
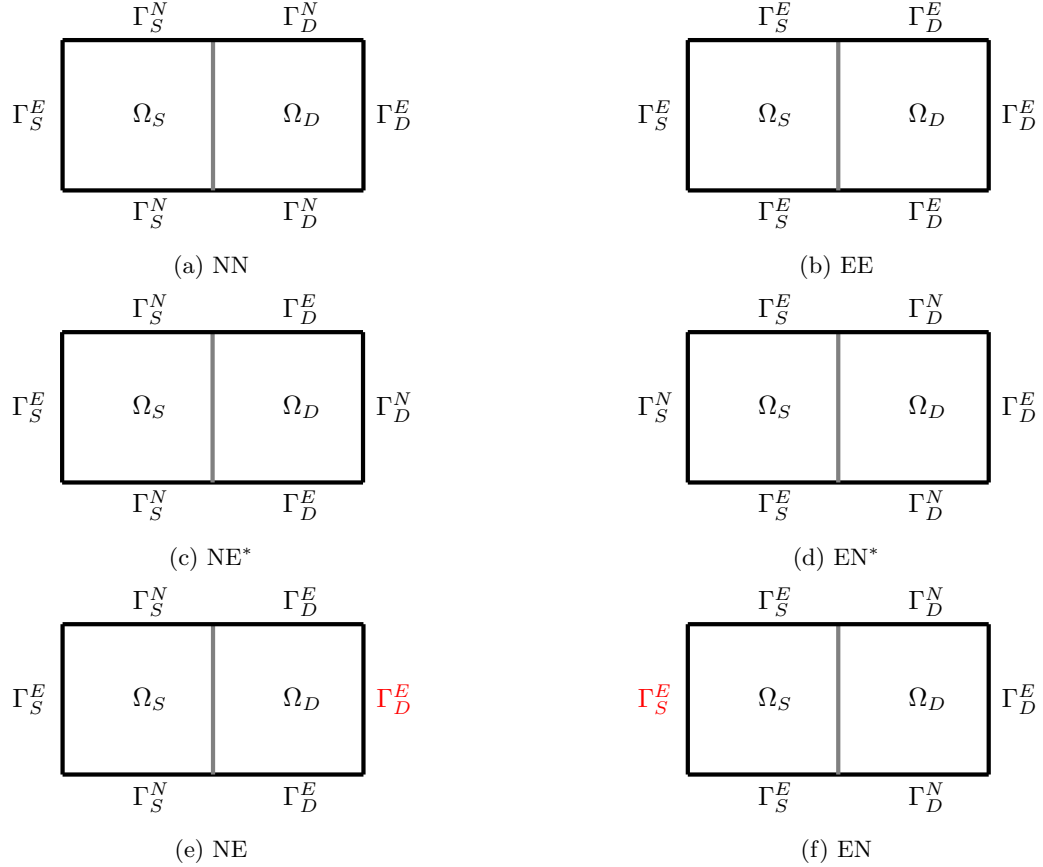

\subsection{Well-posedness of the Stokes-Darcy problem}
Following \Cref{thm-abstract} the well-posedness of the coupled problem requires that the Stokes and Darcy subproblems are individually well-posed, which are subjects of the following two lemmas.

\begin{lemma}[Stokes subproblem with mixed boundary conditions]\label{lemma-Stokes} Let $\Omega_S \subset \mathbb{R}^d$ be a bounded domain with boundary decomposition $\partial \Omega_S=\Gamma \cup  \Gamma_S^E \cup \Gamma_S^N$, where the components are assumed to be of non-zero measure and $\Gamma_S^E\cap \Gamma = \emptyset$. We consider the Stokes problem \eqref{Stokes-model} with the boundary conditions  $\mathbf{u}_S|_{\Gamma_S^E} = \mathbf{0}, \ \bm{\sigma}(\mathbf{u}_S,p_S)\cdot\mathbf{n}_S|_{\Gamma_S^N} = \mathbf{0}$ and
\begin{align}
\mathbf{u}_S \cdot \mathbf{n}_S = g_n, & \quad \text { on } \Gamma, \label{Stokes-inter-1}\\
\boldsymbol{\tau} \cdot\left(2\mu \bm{\epsilon}(\mathbf{u}_S)-p_S I\right) \cdot \mathbf{n}_S +  {\beta_{\tau}}\mathbf{u}_S \cdot \boldsymbol{\tau}  = 0, & \quad \text { on } \Gamma, \label{Stokes-inter-2}
\end{align}
where \eqref{Stokes-inter-1} shall be enforced by a Lagrange multiplier. Let the Lagrange multiplier space be $\Lambda_S = \frac{1}{\sqrt{\mu}} H^{-1 / 2}(\Gamma)$ equipped with the norm $
    \|\lambda\|_{\Lambda_S} = \frac{1}{\sqrt{\mu}}\|\lambda\|_{-\frac{1}{2},\Gamma}. $
Use the velocity and pressure spaces defined in \eqref{Stokes-spaces} and equip $\mathbf{W}=\mathbf{V}_S \times Q_S \times \Lambda_S$ with the norm
\begin{align}\label{norm-stokes}
   & \|(\mathbf{u}_S,p_S,\lambda)\|_{\mathbf{W}} = \left(\|\mathbf{u}_S\|_{\mathbf{V}_S}^2 + \|p_S\|_{Q_S}^2 + \|\lambda\|_{\Lambda_S}^2\right)^{1/2}.
\end{align}
Let $\bm{f}_S \in \mathbf{V}_S^{\prime}$, $g_n \in \Lambda_S^{\prime}$, and  define the bilinear form $a_S: \mathbf{W} \times \mathbf{W} \rightarrow \mathbb{R}$ and a linear operator $L: \mathbf{W} \rightarrow \mathbb{R}$ as follows,
\begin{align*}
a_S((\mathbf{u}_S, p_S, \lambda),(\mathbf{v}_S, q_S, \phi))& = 2\mu(\bm{\epsilon}(\mathbf{u}_S), \bm{\epsilon}(\mathbf{v}_S))_S + \beta_{\tau}(T_t \mathbf{u}_S, T_t \mathbf{v}_S)_{\Gamma} - (\nabla \cdot \mathbf{v}_S, p_S)_S  \\
&\quad -(\nabla \cdot \mathbf{u}_S, q_S)_S + (T_S \mathbf{v}_S, \lambda)_{\Gamma} + ( T_S \mathbf{u}_S, \phi)_{\Gamma},  \\
L((\mathbf{v}_S, q_S, \phi)) & = (\bm{f}_S, \mathbf{v}_S)_S + (g_n, \phi)_{\Gamma}.
\end{align*}
Note that the multiplier space reflects that in our setting $\Gamma$ intersects only with $\Gamma^N_S$. Then the Stokes problem: find $\left(\mathbf{u}_S, p_S, \lambda\right) \in \mathbf{W}$ such that
\begin{equation}\label{Stokes-subproblem}
a_S((\mathbf{u}_S, p_S, \lambda),(\mathbf{v}_S, q_S, \phi))=L((\mathbf{v}_S, q_S, \phi)), \quad \forall\,(\mathbf{v}_S, q_S, \phi) \in \mathbf{W},
\end{equation}
is well-posed. In particular, the Brezzi conditions ensure the following stability estimate
$$
\left\|\left(\mathbf{u}_S, p_S, \lambda\right)\right\|_{\mathbf{W}} \leq C\left(\left\|\bm{f}_S\right\|_{\mathbf{V}_S^{\prime}} + \left\|g_n\right\|_{\Lambda_S^{\prime}}\right),
$$
with $C$ independent of the parameters $\mu$ and $\beta_{\tau}$.
\end{lemma}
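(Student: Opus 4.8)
The plan is to read \eqref{Stokes-subproblem} as a saddle-point problem with operator $A_S$ associated to the form $2\mu(\bm\epsilon(\mathbf{u}_S),\bm\epsilon(\mathbf{v}_S))_S+\beta_\tau(T_t\mathbf{u}_S,T_t\mathbf{v}_S)_\Gamma$ and a single constraint operator $B=(B_S,T_S)$ mapping into $Q_S\times\Lambda_S$, and then to verify the four Brezzi conditions \eqref{a-coercivity-stronger}, \eqref{a-continuity}, \eqref{inf-sup}, \eqref{b-continuity} with constants independent of $\mu$ and $\beta_\tau$; well-posedness and the stated estimate (with $C=E_S$ depending only on these constants, cf. \eqref{eq:estimate-abstract}) then follow from the classical Babu\v ska--Brezzi theory. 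Three of the four conditions are routine. Continuity of $A_S$ is Cauchy--Schwarz together with the definition of $\|\cdot\|_{\mathbf{V}_S}$: $2\mu(\bm\epsilon(\mathbf{u}_S),\bm\epsilon(\mathbf{v}_S))_S\le 2\mu|\mathbf{u}_S|_{1,S}|\mathbf{v}_S|_{1,S}\le 2\|\mathbf{u}_S\|_{\mathbf{V}_S}\|\mathbf{v}_S\|_{\mathbf{V}_S}$ and $\beta_\tau(T_t\mathbf{u}_S,T_t\mathbf{v}_S)_\Gamma\le\|\mathbf{u}_S\|_{\mathbf{V}_S}\|\mathbf{v}_S\|_{\mathbf{V}_S}$. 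Continuity of $B$ combines Cauchy--Schwarz, $\|\nabla\cdot\mathbf{v}_S\|_S\le\sqrt d\,|\mathbf{v}_S|_{1,S}$ and the trace inequality $\|\mathbf{v}_S\cdot\mathbf{n}_S\|_{1/2,\Gamma}\lesssim\|\mathbf{v}_S\|_{1,S}$ (valid since $\Gamma$ is a fixed piece of $\partial\Omega_S$), after which one checks that the $\mu$-weights in $Q_S$, $\Lambda_S$ and $\mathbf{V}_S$ cancel, so $D_S$ is parameter-free.

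For \eqref{a-coercivity-stronger} I would invoke Korn's second inequality on $\mathbf{H}^1_{0,E}(\Omega_S)$: since $|\Gamma_S^E|>0$ there is a constant $C_K=C_K(\Omega_S,\Gamma_S^E)$ with $\|\mathbf{v}_S\|_{1,S}\le C_K\|\bm\epsilon(\mathbf{v}_S)\|_S$ for all $\mathbf{v}_S\in\mathbf{H}^1_{0,E}(\Omega_S)$. Hence $2\mu\|\bm\epsilon(\mathbf{u}_S)\|_S^2+\beta_\tau\|T_t\mathbf{u}_S\|_\Gamma^2\ge\min\{2C_K^{-2},1\}\,\|\mathbf{u}_S\|_{\mathbf{V}_S}^2$ for every $\mathbf{u}_S\in\mathbf{V}_S$ -- in particular on the kernel $\{\mathbf{u}_S\in\mathbf{V}_S:\nabla\cdot\mathbf{u}_S=0\}$ -- with a constant depending only on the geometry.

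The heart of the proof is the inf-sup condition \eqref{inf-sup} for $B=(B_S,T_S)$: given $(q_S,\phi)\in Q_S\times\Lambda_S$ I must produce $\mathbf{v}_S\in\mathbf{V}_S$ with $(\nabla\cdot\mathbf{v}_S,q_S)_S+(T_S\mathbf{v}_S,\phi)_\Gamma\gtrsim\|\mathbf{v}_S\|_{\mathbf{V}_S}\,(\|q_S\|_{Q_S}^2+\|\phi\|_{\Lambda_S}^2)^{1/2}$ with hidden constant free of $\mu,\beta_\tau$. I would build $\mathbf{v}_S=\mathbf{z}+\mathbf{w}$ in two steps. First, using that $\Gamma_S^E\cap\Gamma=\emptyset$ (so the trace of $\mathbf{H}^1_{0,E}(\Omega_S)$ onto $\Gamma$ is all of $H^{1/2}(\Gamma)$, which is exactly why $\Lambda_S=\mu^{-1/2}H^{-1/2}(\Gamma)$ is the correct multiplier space here), take $\eta\in H^{1/2}(\Gamma)$ to be the Riesz representative of $\phi\in H^{-1/2}(\Gamma)$, so that $(\eta,\phi)_\Gamma=\|\phi\|_{-1/2,\Gamma}^2$ and $\|\eta\|_{1/2,\Gamma}=\|\phi\|_{-1/2,\Gamma}$, and let $\mathbf{w}\in\mathbf{H}^1_{0,E}(\Omega_S)$ be a bounded lifting of the vector trace $\eta\,\mathbf{n}_S$ on $\Gamma$ (under mild regularity of $\Gamma$); this lifting has $\mathbf{w}\cdot\bm\tau|_\Gamma=0$ and $\|\mathbf{w}\|_{1,S}\lesssim\|\eta\|_{1/2,\Gamma}$. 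Second, correct the divergence: by the classical Stokes inf-sup on $\Omega_S$ with homogeneous velocity on $\Gamma\cup\Gamma_S^E$ and the natural (do-nothing) condition on $\Gamma_S^N$ -- where $|\Gamma_S^N|>0$ removes the zero-mean compatibility constraint -- there is $\mathbf{z}$ vanishing on $\Gamma\cup\Gamma_S^E$ with $\nabla\cdot\mathbf{z}=q_S-\nabla\cdot\mathbf{w}$ and $\|\mathbf{z}\|_{1,S}\lesssim\|q_S\|_S+\|\mathbf{w}\|_{1,S}$. Then $\mathbf{v}_S=\mathbf{z}+\mathbf{w}$ satisfies $\nabla\cdot\mathbf{v}_S=q_S$, $T_S\mathbf{v}_S|_\Gamma=\eta$, $\mathbf{v}_S\cdot\bm\tau|_\Gamma=0$ (whence $\|\mathbf{v}_S\|_{\mathbf{V}_S}=\sqrt\mu\|\mathbf{v}_S\|_{1,S}$ -- this is precisely why the tangential trace of the lifting must vanish, since otherwise the possibly large weight $\beta_\tau$ would enter the estimate), and $\|\mathbf{v}_S\|_{1,S}\lesssim\|q_S\|_S+\|\eta\|_{1/2,\Gamma}$. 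Substituting $\|q_S\|_S=\sqrt\mu\|q_S\|_{Q_S}$ and $\|\eta\|_{1/2,\Gamma}=\|\phi\|_{-1/2,\Gamma}=\sqrt\mu\|\phi\|_{\Lambda_S}$, the numerator equals $\|q_S\|_S^2+\|\phi\|_{-1/2,\Gamma}^2\simeq\mu(\|q_S\|_{Q_S}^2+\|\phi\|_{\Lambda_S}^2)$ while $\|\mathbf{v}_S\|_{\mathbf{V}_S}\lesssim\mu(\|q_S\|_{Q_S}^2+\|\phi\|_{\Lambda_S}^2)^{1/2}$; the factors of $\mu$ cancel and yield a parameter-independent inf-sup constant.

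The main obstacle is exactly this construction: simultaneously controlling the pressure through $\nabla\cdot\mathbf{v}_S$ and the multiplier through $T_S\mathbf{v}_S$ while forcing $\mathbf{v}_S\cdot\bm\tau|_\Gamma=0$ so the weight $\beta_\tau$ does not pollute $\|\mathbf{v}_S\|_{\mathbf{V}_S}$, and then keeping careful track of the $\sqrt\mu$ weights so the final constant is clean; the role of $|\Gamma_S^N|>0$ (no compatibility constraint) and of $\Gamma_S^E\cap\Gamma=\emptyset$ (full $H^{1/2}(\Gamma)$ trace, hence $H^{-1/2}(\Gamma)$ the right multiplier space, in contrast to the $H^{1/2}_{00}$-situations responsible for the near-kernel cases studied later) enters here. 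Once the four Brezzi conditions hold with parameter-free constants, the Brezzi theory underlying \Cref{thm-abstract} gives well-posedness of \eqref{Stokes-subproblem} and the estimate $\|(\mathbf{u}_S,p_S,\lambda)\|_{\mathbf{W}}\le C(\|\bm f_S\|_{\mathbf{V}_S'}+\|g_n\|_{\Lambda_S'})$ with $C$ independent of $\mu$ and $\beta_\tau$.
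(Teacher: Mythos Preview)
Your proposal is correct and follows essentially the same approach as the paper: both verify the Brezzi conditions with parameter-free constants, and the key inf-sup construction in both cases hinges on lifting the Riesz representative of $\phi$ as a purely normal vector field $\eta\,\mathbf{n}_S$ on $\Gamma$ so that the tangential trace vanishes and the weight $\beta_\tau$ drops out of $\|\mathbf{v}_S\|_{\mathbf{V}_S}$. The only cosmetic difference is the order of the two building blocks: the paper first takes $\mathbf{v}^{p_S}$ with $-\nabla\cdot\mathbf{v}^{p_S}=p_S$ and $\mathbf{v}^{p_S}|_\Gamma=0$, then a \emph{divergence-free} lifting $\mathbf{v}^\lambda$ of $\psi\,\mathbf{n}_S$, and sets $\mathbf{v}_S=\mu^{-1}(\mathbf{v}^{p_S}+\mathbf{v}^\lambda)$; you instead lift first (without asking $\mathbf{w}$ to be divergence-free) and then correct the divergence via $\mathbf{z}$ with $\mathbf{z}|_{\Gamma}=0$. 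Both constructions rely on the same geometric hypotheses ($\Gamma_S^E\cap\Gamma=\emptyset$ for the full $H^{1/2}(\Gamma)$ trace, $|\Gamma_S^N|>0$ to avoid a mean-value constraint) and yield the same parameter-independent inf-sup constant.
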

\begin{proof}
The operator form of problem \eqref{Stokes-subproblem} is
\begin{equation*}
\left(\begin{array}{ccc}
A_S & B_S^{\prime} & T_S^{\prime} \\
B_S & & \\
T_S & &
\end{array}\right)\left(\begin{array}{c}
\mathbf{u}_S \\
p_S \\
\lambda
\end{array}\right)=\left(\begin{array}{c}
\mathbf{f}_S \\
0 \\
g_n
\end{array}\right)
\end{equation*}
In order to prove that the problem \eqref{Stokes-subproblem} is well-posed, we need to verify the conditions \eqref{a-continuity}-\eqref{b-continuity} and \eqref{a-coercivity-stronger}.  The continuity \eqref{a-continuity} and coercivity \eqref{a-coercivity-stronger} are promptly satisfied.  The boundedness \eqref{b-continuity} follows from the Cauchy-Schwarz inequality and trace inequality,
\begin{align*}
     (B_S\mathbf{u}_S, q_S) + (T_S\mathbf{u}_S, \phi)
    & = - (\nabla\cdot\mathbf{u}_S, q_S)_S + (\mathbf{u}_S\cdot\mathbf{n}_S, \phi)_{\Gamma} \\
    & \leq \|\nabla \cdot \mathbf{u}_S\|_{S}\left\|q_S\right\|_{S} + \| \mathbf{u}_S \cdot \mathbf{n}_S \|_{\frac{1}{2}, \Gamma} \|\phi\|_{-\frac{1}{2}, \Gamma} \\
     & \lesssim\|\bm{\epsilon}(\mathbf{u}_S)\|_{S}\left(\|q_S\|_{S}+\|\phi\|_{-\frac{1}{2}, \Gamma}\right) \\
     & \lesssim \|\mathbf{u}_S\|_{\mathbf{V}_S} \left(\|q_S\|_{Q_S}^2 + \|\phi\|_{\Lambda_S}^2\right)^{1/2}.
\end{align*}

Next we verify the inf-sup condition \eqref{inf-sup}, that is,
\begin{equation}\label{inf-sup-Stokes}
    \sup_{\mathbf{v}_S\in\mathbf{V}_S\backslash\{0\}}\frac{(B_S\mathbf{v}_S,p_S)+(T_S\mathbf{v}_S,\lambda)}{\|\mathbf{v}_S\|_{\mathbf{V}_S}} \geq \gamma_S \left(\|p_S\|_{Q_S}^2 + \|\lambda\|_{\Lambda_S}^2\right)^{1/2}.
\end{equation}
Firstly, construct $\mathbf{v}^{p_S}\in\mathbf{V}_S$ satisfying the Stokes inf-sup condition 
\begin{subequations}
\begin{align}
    & -\nabla\cdot\mathbf{v}^{p_S} = p_S, \quad \mathbf{v}^{p_S}|_{\Gamma} = 0, \label{vps-1}\\
    & \|\bm{\epsilon}(\mathbf{v}^{p_S})\|_{S} \lesssim \|p_S\|_{S}.\label{vps-2} 
\end{align}
\end{subequations}
Then, for given $\lambda\in H^{-1/2}(\Gamma)$, let $\psi\in H^{1/2}(\Gamma)$ be the Riesz representation of $\lambda$ with respect to $(\cdot, \cdot)_{\Gamma}$. Define $\mathbf{v}^{\lambda}\in \mathbf{H}^{1}(\Omega_S)$ such that
\begin{subequations}
\begin{align}
    & \nabla\cdot\mathbf{v}^{\lambda} = 0, \quad \mathbf{v}^{\lambda}|_{\Gamma} = \psi\mathbf{n}_S, \label{vl-1}\\
    & \|\bm{\epsilon}(\mathbf{v}^{\lambda})\|_{S} \lesssim \|\psi\|_{\frac{1}{2},\Gamma} = \|\lambda\|_{-\frac{1}{2},\Gamma}. \label{vl-2}
\end{align}
\end{subequations}
Let us choose the test function $\mathbf{v}_S = \mu^{-1}(\mathbf{v}^{p_S}+\mathbf{v}^{\lambda})$, combine \eqref{vps-1} with \eqref{vl-1},  so that, since $\mathbf{v}_S\cdot\bm{\tau} = 0$, we obtain 
\begin{align}
    (B_S\mathbf{v}_S,p_S) + (T_S\mathbf{v}_S,\lambda) & = -(\nabla\cdot\mathbf{v}_S,p_S)_S + (\mathbf{v}_S\cdot\mathbf{n}_S, \lambda)_{\Gamma} \nonumber \\
    & = -\mu^{-1}(\nabla\cdot\mathbf{v}^{p_S},p_S)_S + \mu^{-1} (\mathbf{v}^{\lambda}\cdot\mathbf{n}_S, \lambda)_{\Gamma} = \mu^{-1}\|p_S\|_{S}^2 + \mu^{-1}\|\lambda\|_{-\frac{1}{2},\Gamma}^2. \label{estimate-1}
\end{align}
Furthermore, note that $T_t\mathbf{v}_S = \mathbf{v}_S\cdot\bm{\tau} = 0$, the following estimate follows directly from the estimates \eqref{vps-2} and \eqref{vl-2},
\begin{align}
 \|\mathbf{v}_S\|_{\mathbf{V}_S}^2 & = \mu \|\bm{\epsilon}(\mathbf{v}_S)\|_{S}^2 + \beta_{\tau}\|T_t\mathbf{v}_S\|_{\Gamma}^2 \nonumber \\
    & \leq \mu^{-1} \| \bm{\epsilon}(\mathbf{v}^{p_S})\|_{S}^2 + \mu^{-1} \| \bm{\epsilon}(\mathbf{v}^{\lambda})\|_{S}^2  \lesssim \mu^{-1}\|p_S\|_{S}^2 + \mu^{-1}\|\lambda\|_{-\frac{1}{2},\Gamma}^2. \label{estimate-2}
\end{align}
Combining \eqref{estimate-1} with \eqref{estimate-2} leads to the inf-sup condition \cref{inf-sup-Stokes}. Based on the Brezzi theory, the Stokes subproblem is well-posed with respect to the norm \eqref{norm-stokes}.  
\end{proof}

\begin{remark}[Stokes subproblem with pure Dirichlet conditions] \label{remark-Stokes}
 Assume that $\partial\Omega_S = \Gamma\cup\Gamma_S^E$, i.e. $\Gamma_S^N = \emptyset$. Consequently, the pressure and Lagrange multiplier spaces are adjusted to $(Q_S\times\Lambda_S)/\mathbb{R}=(\frac{1}{\sqrt{\mu}}L^2(\Omega_S)\times \frac{1}{\sqrt{\mu}}H^{-1/2}_{00}(\Gamma))/\mathbb{R}$. In this case, we still construct the Stokes problem \eqref{Stokes-model} with the boundary conditions $\mathbf{u}_S|_{\Gamma_S^E} = 0$, and \eqref{Stokes-inter-1}-\eqref{Stokes-inter-2} on $\Gamma$. Using the same velocity space $\mathbf{V}_S$ and repeating the same proof process outlined above, we obtain the same results as established in \cref{lemma-Stokes}.
\end{remark}

\begin{lemma}[Darcy subproblem with mixed boundary conditions]\label{lemma-Darcy}
Let $\Omega_D \subset \mathbb{R}^d$ be a bounded domain with boundary decomposition $\partial \Omega_D=\Gamma \cup \Gamma_D^E\cup \Gamma_D^N$, where the components are assumed to be of non-zero measure and $\Gamma_D^E\cap\Gamma=\emptyset$. We consider the Darcy problem \eqref{Darcy-model} with the boundary condition $\mathbf{u}_D\cdot\mathbf{n}_D|_{\Gamma_D^E} = 0, \ p_D|_{\Gamma_D^N} = 0$ and 
\begin{align}\label{Darcy-inter}
\mathbf{u}_D \cdot \mathbf{n}_D = -g_n, & \quad \text { on } \Gamma,
\end{align}
which shall be enforced by a Lagrange multiplier. Let the Lagrange multiplier space be $\Lambda_D = \sqrt{{K}} H_{00}^{1/2}(\Gamma)$ equipped with the weighted norm $\|\lambda\|_{\Lambda_D} = \sqrt{{K}}\|\lambda\|_{\frac{1}{2},\Gamma}$. Use the velocity and pressure spaces \eqref{Darcy-spaces} and equip $\mathbf{W}=\mathbf{V}_D \times Q_D \times \Lambda_D$ with the norm
\begin{align*}
   & \|(\mathbf{u}_D,p_D,\lambda)\|_{\mathbf{W}} = \left(\|\mathbf{u}_D\|_{\mathbf{V}_D}^2 + \|p_D\|_{Q_D}^2 + \|\lambda\|_{\Lambda_D}^2\right)^{1/2}.
\end{align*}
Let $g_D \in Q_D^{\prime}$, $g_n \in \Lambda_D^{\prime}$, and
define the bilinear form $a_D : \mathbf{W} \times \mathbf{W} \to \mathbb{R}$  and a linear form $L: \mathbf{W} \to \mathbb{R}$ as follows, 
\begin{align*}
a_D((\mathbf{u}_D, p_D, \lambda),(\mathbf{v}_D, q_D, \phi))& = {K}^{-1}(\mathbf{u}_D, \mathbf{v}_D)_D - ( \nabla \cdot \mathbf{v}_D, p_D)_D \\
& \quad +(\nabla \cdot \mathbf{u}_D, q_D)_D + (T_D \mathbf{v}_D, \lambda)_{\Gamma} + (T_D \mathbf{u}_D, \phi)_{\Gamma},  \\
L((\mathbf{v}_D, q_D, \phi))& =(g_D, q_D)_D - (g_n, \phi)_{\Gamma} .
\end{align*}
Then the Darcy problem: find $(\mathbf{u}_D, p_D, \lambda) \in \mathbf{W}$ such that
\begin{equation*}
a_D((\mathbf{u}_D, p_D, \lambda),(\mathbf{v}_D, q_D, \phi))=L((\mathbf{v}_D, q_D, \phi)), \quad \forall\,(\mathbf{v}_D, q_D, \phi) \in \mathbf{W},
\end{equation*}
is well-posed. In particular, the Brezzi conditions ensure the following stability estimate
\begin{equation*}
\|(\mathbf{u}_D, p_D, \lambda)\|_\mathbf{W} \leq C\left(\|g_D\|_{Q_D^{\prime}} + \|g_n\|_{\Lambda_D^{\prime}}\right),
\end{equation*}
with $C$ independent of the parameter $K$.
\end{lemma}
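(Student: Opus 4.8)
The plan is to verify, for the operator triple $A_D = K^{-1}(\cdot,\cdot)_D$, $B_D = \nabla\cdot$, $T_D\mathbf{u}_D = \mathbf{u}_D\cdot\mathbf{n}_D|_\Gamma$, the Brezzi conditions \eqref{a-continuity}--\eqref{b-continuity} together with the strengthened coercivity \eqref{a-coercivity-stronger}, with respect to the weighted norms of \eqref{Darcy-spaces} and $\|\phi\|_{\Lambda_D} = \sqrt{K}\|\phi\|_{\frac{1}{2},\Gamma}$; well-posedness and the stability estimate then follow from the Brezzi theory (cf.\ \eqref{eq:estimate-abstract}), with a $K$-independent constant because every weight will cancel. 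The continuity of $A_D$ is immediate from Cauchy--Schwarz and $K^{-1/2}\|\cdot\|_D \le \|\cdot\|_{\mathbf{V}_D}$, giving $C_D = 1$. For \eqref{a-coercivity-stronger}, observe that $\{\mathbf{u}_D \in \mathbf{V}_D : (B_D\mathbf{u}_D, q_D)=0\ \forall q_D\in Q_D\}$ is exactly the space of divergence-free fields, on which the $\nabla\cdot$ contribution to $\|\cdot\|_{\mathbf{V}_D}$ vanishes; hence $(A_D\mathbf{u}_D,\mathbf{u}_D) = K^{-1}\|\mathbf{u}_D\|_D^2 = \|\mathbf{u}_D\|_{\mathbf{V}_D}^2$ there and $\alpha_D = 1$. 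The continuity \eqref{b-continuity} of the combined operator $(B_D,T_D)$ follows from Cauchy--Schwarz on the volume term $(\nabla\cdot\mathbf{u}_D,q_D)_D$ and from the normal-trace inequality $\|\mathbf{u}_D\cdot\mathbf{n}_D\|_{-\frac{1}{2},\Gamma}\lesssim\|\mathbf{u}_D\|_{\mathbf{H}(\operatorname{div};\Omega_D)}$ on the interface term $(T_D\mathbf{u}_D,\phi)_\Gamma$, after which the $K$-weights cancel exactly as in the bound for $(B_S,T_S)$ in \Cref{lemma-Stokes}.

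The crux is the inf-sup condition \eqref{inf-sup}: given $(q_D,\phi)\in Q_D\times\Lambda_D$ one must exhibit $\mathbf{u}_D\in\mathbf{V}_D$ with $(B_D\mathbf{u}_D,q_D)+(T_D\mathbf{u}_D,\phi)_\Gamma\gtrsim\|q_D\|_{Q_D}^2+\|\phi\|_{\Lambda_D}^2$ and $\|\mathbf{u}_D\|_{\mathbf{V}_D}\lesssim(\|q_D\|_{Q_D}^2+\|\phi\|_{\Lambda_D}^2)^{1/2}$, with constants free of $K$. Mirroring the construction $\mathbf{v}^{p_S}+\mathbf{v}^{\lambda}$ in the proof of \Cref{lemma-Stokes}, I would set $\mathbf{u}_D=\mathbf{u}_D^{q}+\mathbf{u}_D^{\lambda}$. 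For the pressure part, choose $\mathbf{u}_D^{q}\in\mathbf{H}_{0,E}(\operatorname{div};\Omega_D)$ with $\nabla\cdot\mathbf{u}_D^{q}=q_D$, $\mathbf{u}_D^{q}\cdot\mathbf{n}_D=0$ on $\Gamma$, and $\|\mathbf{u}_D^{q}\|_{\mathbf{H}(\operatorname{div};\Omega_D)}\lesssim\|q_D\|_D$; such a field exists, e.g.\ as $\nabla z$ with $z$ solving a mixed Laplace problem carrying homogeneous Neumann data on $\Gamma\cup\Gamma_D^E$ and homogeneous Dirichlet data on $\Gamma_D^N$, the a priori bound coming from the mixed Poincar\'e inequality --- and no solvability constraint arises precisely because $|\Gamma_D^N|>0$. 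For the multiplier part, let $\psi$ be the Riesz representative of $\phi$ on the $H^{-1/2}$ side of the $(\cdot,\cdot)_\Gamma$ duality (the analogue of $\psi$ in \Cref{lemma-Stokes}), so that $(\psi,\phi)_\Gamma=\|\phi\|_{\frac{1}{2},\Gamma}^2$ and $\|\psi\|_{-\frac{1}{2},\Gamma}=\|\phi\|_{\frac{1}{2},\Gamma}$, and take $\mathbf{u}_D^{\lambda}\in\mathbf{H}_{0,E}(\operatorname{div};\Omega_D)$ with $\nabla\cdot\mathbf{u}_D^{\lambda}=0$, normal trace on $\Gamma$ equal to $\psi$, and $\|\mathbf{u}_D^{\lambda}\|_{\mathbf{H}(\operatorname{div};\Omega_D)}\lesssim\|\psi\|_{-\frac{1}{2},\Gamma}=\|\phi\|_{\frac{1}{2},\Gamma}$; this is a standard $\mathbf{H}(\operatorname{div})$ trace lifting (cf.\ \cite{juan2007nonmathcing}), again unobstructed by the free normal component on $\Gamma_D^N$.

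Since $\mathbf{u}_D^{q}\cdot\mathbf{n}_D=0$ on $\Gamma$, the cross terms vanish and $(B_D\mathbf{u}_D,q_D)+(T_D\mathbf{u}_D,\phi)_\Gamma=\|q_D\|_D^2+\|\phi\|_{\frac{1}{2},\Gamma}^2$, while $\|\mathbf{u}_D\|_{\mathbf{V}_D}=K^{-1/2}\|\mathbf{u}_D\|_{\mathbf{H}(\operatorname{div};\Omega_D)}\lesssim K^{-1/2}(\|q_D\|_D+\|\phi\|_{\frac{1}{2},\Gamma})$. Substituting $\|q_D\|_D=K^{-1/2}\|q_D\|_{Q_D}$ and $\|\phi\|_{\frac{1}{2},\Gamma}=K^{-1/2}\|\phi\|_{\Lambda_D}$ shows the quotient in \eqref{inf-sup} is $\gtrsim(\|q_D\|_{Q_D}^2+\|\phi\|_{\Lambda_D}^2)^{1/2}$ with $\gamma_D$ independent of $K$. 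With all four Brezzi constants bounded independently of $K$, the estimate \eqref{eq:estimate-abstract} yields well-posedness and the claimed stability bound.

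The main obstacle is the inf-sup step, and within it the combined lifting: producing a single $\mathbf{H}(\operatorname{div})$ field that simultaneously attains a prescribed $L^2(\Omega_D)$ divergence and a prescribed (Riesz-dual) normal trace on $\Gamma$, with control by the natural norms and --- crucially --- with constants that neither blow up nor vanish as $K\to0$ or $K\to\infty$. The bookkeeping between $H^{1/2}_{00}(\Gamma)$ and its dual, and the verification that the free normal component on $\Gamma_D^N$ makes both pieces of the lifting solvable without compatibility conditions, are the delicate points; the remaining three conditions are routine.
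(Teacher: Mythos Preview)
Your argument is correct and follows the same blueprint as the paper: verify continuity and kernel-coercivity directly, then establish the inf-sup condition by constructing an explicit $\mathbf{H}(\operatorname{div})$ lifting from an auxiliary elliptic problem and check that the $K$-weights cancel.

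The one structural difference is how the lifting is built. You split $\mathbf{u}_D=\mathbf{u}_D^{q}+\mathbf{u}_D^{\lambda}$ into two pieces (divergence part and trace part), with the trace data produced by the abstract Riesz isomorphism $H^{1/2}_{00}(\Gamma)\to H^{-1/2}_{00}(\Gamma)$; this mirrors the two-piece construction $\mathbf{v}^{p_S}+\mathbf{v}^{\lambda}$ in \Cref{lemma-Stokes} exactly. The paper instead builds a \emph{single} field $\mathbf{v}_D=\nabla w$ where $w$ solves one mixed Laplace problem carrying both $Kp_D$ as volume source and $K\mathcal{D}_\Gamma\lambda$ as Neumann data on $\Gamma$, with $\mathcal{D}_\Gamma$ the Dirichlet-to-Neumann map (a concrete realization of the same Riesz-type isomorphism you invoke). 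By linearity these constructions are interchangeable; your modular version keeps the cross terms manifestly zero, while the paper's combined version makes the $K$-scaling explicit from the outset. Either way the inf-sup constant comes out independent of $K$, and your care about the $H^{1/2}_{00}$/$H^{-1/2}_{00}$ duality and the role of $|\Gamma_D^N|>0$ in removing compatibility constraints is exactly what is needed.
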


\begin{proof}
In this lemma, we mainly establish the Darcy inf-sup condition on domain $\Omega_D$ for which we follow the same approach as in \cref{lemma-Stokes}.  
For given $(p_D,\lambda)$ in $Q_D\times \Lambda_D$, we construct $w\in H^{1}(\Omega_D)$ as the solution of the following elliptic problem
  \begin{subequations}\label{construct-vd}
      \begin{align}
         -\nabla\cdot\nabla w = {K} p_D, & \quad \text{ in } \Omega_D, \label{construct-vd-1}\\
        \nabla w\cdot\mathbf{n}_D = {K} \mathcal{D}_{\Gamma}  \lambda, & \quad  \text{ on } \Gamma, \label{construct-vd-2}\\
          \nabla w\cdot\mathbf{n}_D  = 0, & \quad  \text{ on } \Gamma_D^E, \label{construct-vd-3}\\
          w = 0, & \quad  \text{ on } \Gamma_D^N, \label{construct-vd-4}
      \end{align}
  \end{subequations}
  where $\mathcal{D}_{\Gamma}: H^{1/2}_{00}(\Gamma)\rightarrow H^{-1/2}_{00}(\Gamma)$ is the Dirichlet-to-Neumann (DtN) mapping~\cite{sylvester1990dirichlet}. It is defined by $\mathcal{D}_{\Gamma}\lambda = \nabla u \cdot \mathbf{n}_D|_{\Gamma}$, where $u$ is the solution of 
\begin{equation}\label{aux-laplace}
\begin{cases}
    -\Delta u = 0, \quad \text{in} \ \Omega_D, \\
     u|_{\Gamma}= \lambda, \quad  u|_{\Gamma_D^N \cup \Gamma_D^E} = 0.  \\
\end{cases}
\end{equation}

Integrating by parts for \eqref{construct-vd-1}, invoking Cauchy-Schwarz inequality, the boundness-property of the DtN mapping, the trace inequality, and Poincar\'{e} inequality, we have
  \begin{align*}
      \|\nabla w\|_{D}^2 & = (\nabla w\cdot \mathbf{n}_D,w)_{\Gamma} + {K} (p_D,w)_D \leq  {K}\| \mathcal{D}_{\Gamma} \lambda\|_{-\frac{1}{2},\Gamma}\|w\|_{\frac{1}{2},\Gamma}  + {K}\|p_D\|_{D} \|w\|_{D}\\
      & \lesssim {K}\|\lambda\|_{\frac{1}{2},\Gamma}\|w\|_{\frac{1}{2},\Gamma}  + {K}\|p_D\|_{D} \|w\|_{D} \lesssim {K} (\|\lambda\|_{\frac{1}{2},\Gamma}^2 + \|p_D\|_{D}^2)^{1/2} \|\nabla w\|_{D}.
  \end{align*}
Letting $\mathbf{v}_D = \nabla w$ we have
\begin{align}\label{bound-vd}
    {K}^{-1}\|\mathbf{v}_D\|_{D}^2 \lesssim {K}( \|p_D\|_D^2 + \|\lambda\|_{\frac{1}{2},\Gamma}^2),
\end{align}
and from the \eqref{construct-vd-1}, there holds
  \begin{align}\label{bound-div-vd}
      & {K}^{-1}\|\nabla\cdot\mathbf{v}_D\|_D^2 = {K} \|p_D\|_{D}^2.
  \end{align}
  Combining \eqref{bound-div-vd} with \eqref{bound-vd} leads to the following estimate, 
  \begin{align}\label{vd-bound}
      \|\mathbf{v}_D\|_{\mathbf{V}_D} & \lesssim  \|p_D\|_{Q_D} + \|\lambda \|_{\Lambda_D},
  \end{align}
 In addition, combining \eqref{construct-vd-2} with \eqref{construct-vd-3}, we have
  \begin{align}\label{vd-inf-sup}
      -(\nabla\cdot\mathbf{v}_D,p_D)_D + (\mathbf{v}_D\cdot\mathbf{n}_D,\lambda)_{\Gamma} = {K}\|p_D\|_{D}^2 +  {K}\|\lambda\|_{\frac{1}{2},\Gamma}^2. 
  \end{align}
  Then, the inf-sup condition \cref{inf-sup} follows from \eqref{vd-bound} and \eqref{vd-inf-sup}. Combining with he continuity and coercivity conditions completes the proof. 
\end{proof}

\begin{remark}[Darcy subproblem with pure Dirichlet conditions]\label{remark-Darcy}
  Assume that $\partial\Omega_D = \Gamma\cup\Gamma_D^E$, i.e. $\Gamma_D^N = \emptyset$. We still construct the Darcy problem \eqref{Darcy-model} with the boundary condition $\mathbf{u}_D\cdot\mathbf{n}_D|_{\Gamma_D^E} = 0$, and the condition \eqref{Darcy-inter} on $\Gamma$. The corresponding pressure and Lagrange multiplier spaces should be $(Q_D\times\Lambda)/\mathbb{R} = (\sqrt{{K}}L^2(\Omega_D)\times \sqrt{{K}}H^{1/2}(\Gamma))/\mathbb{R}$, and we use the same velocity space $\mathbf{V}_D$ as in \eqref{Darcy-spaces}. For the inf-sup condition, we construct the following auxiliary problem similar to \eqref{construct-vd}, 
    \begin{equation*}
      \begin{cases}
         -\nabla\cdot\nabla w = {K} p_D, & \text{ in } \Omega_D,\\
        \nabla w\cdot\mathbf{n}_D = {K} \mathcal{D}_{\Gamma} \lambda, & \text{ on } \Gamma,\\
          \nabla w\cdot\mathbf{n}_D  = 0, & \text{ on } \Gamma_D^E, 
      \end{cases}
    \end{equation*}
    which has a unique solution in {$H^1(\Omega_D)/\mathbb{R}$}. Here, $\mathcal{D}_{\Gamma}: H^{1/2}(\Gamma)\rightarrow H^{-1/2}(\Gamma)$, the difference from \eqref{aux-laplace} is that solution $u$ satisfies  $\nabla u\cdot\mathbf{n}_D|_{\Gamma_D^E} = 0$. Following
    the proof of \cref{lemma-Darcy}, we can obtain well-posedness of the Darcy problem.
\end{remark}

Now we are ready to show the well-posedness of the Stokes-Darcy problem \cref{weak-form}. From the well-posedness of each subproblem in \cref{lemma-Stokes} and \cref{lemma-Darcy}, \cref{remark-Stokes} and \cref{remark-Darcy}, we can formulate the following theorem ensuring the well-posedness of the coupled system.
\begin{theorem}\label{Thm-wellposedness}
  The Stokes-Darcy operator $\mathcal{A}$ in \eqref{operator-A} is an isomorphism mapping $\mathbf{W}$ to $\mathbf{W}^{\prime}$ such that $\|\mathcal{A}\|_{\mathcal{L}\left(\mathbf{W}, \mathbf{W}^{\prime}\right)} \leq C_1$ and $\left\|\mathcal{A}^{-1}\right\|_{\mathcal{L}\left(\mathbf{W}^{\prime}, \mathbf{W}\right)} \leq$ $C_2^{-1}$, where $C_1$ and $C_2$ are independent of the physical parameters. Here the Lagrange multiplier space $\Lambda$ and the solution space $\mathbf{W}$ reflect configuration of the boundary conditions and are defined as follows 
  \begin{align*}
    &\mbox{(NN)}\,\,
&\Lambda  =  \frac{1}{\sqrt{\mu}}H^{-1/2}(\Gamma)\cap \sqrt{{K}}H_{00}^{1/2}(\Gamma),\,
 &\mathbf{W}  = \mathbf{V}\times\frac{1}{\sqrt{\mu}} L^2\left(\Omega_S\right)\times\sqrt{{K}} L^2\left(\Omega_D\right)\times \Lambda, \\
&\mbox{(EE)}\,\,
    &\Lambda  = \frac{1}{\sqrt{\mu}}H_{00}^{-1/2}(\Gamma)\cap \sqrt{{K}}H^{1/2}(\Gamma),\,
   &\mathbf{W}  = \mathbf{V}\times\Big(\frac{1}{\sqrt{\mu}} L^2\left(\Omega_S\right) \times \sqrt{{K}} L^2\left(\Omega_D\right)\times\Lambda\Big)/\mathbb{R}, \\
&\mbox{(NE)}\,\,
&\Lambda  =  \frac{1}{\sqrt{\mu}}H^{-1/2}(\Gamma)\cap \sqrt{{K}}H^{1/2}(\Gamma),\, 
&\mathbf{W} = \mathbf{V}\times\frac{1}{\sqrt{\mu}} L^2\left(\Omega_S\right)\times\Big(\sqrt{{K}} L^2\left(\Omega_D\right)\times \Lambda\Big)/\mathbb{R}, \\
&\mbox{(NE$^{*}$)}\,\,
&\Lambda  =  \frac{1}{\sqrt{\mu}}H^{-1/2}(\Gamma)\cap \sqrt{{K}}H^{1/2}(\Gamma),\, 
&\mathbf{W} = \mathbf{V}\times\frac{1}{\sqrt{\mu}} L^2\left(\Omega_S\right)\times\sqrt{{K}} L^2\left(\Omega_D\right)\times \Lambda, \\
&\mbox{(EN)}\,\,
    &\Lambda  = \frac{1}{\sqrt{\mu}}H_{00}^{-1/2}(\Gamma)\cap \sqrt{{K}}H_{00}^{1/2}(\Gamma),\,
    &\mathbf{W} = \mathbf{V}\times\sqrt{{K}} L^2\left(\Omega_D\right) \times \Big(\frac{1}{\sqrt{\mu}} L^2\left(\Omega_S\right) \times \Lambda\Big)/\mathbb{R}, \\
&\mbox{(EN$^{*}$)}\,\,
    &\Lambda  = \frac{1}{\sqrt{\mu}}H_{00}^{-1/2}(\Gamma)\cap \sqrt{{K}}H_{00}^{1/2}(\Gamma),\,
    &\mathbf{W} = \mathbf{V}\times\sqrt{{K}} L^2\left(\Omega_D\right) \times \frac{1}{\sqrt{\mu}} L^2\left(\Omega_S\right) \times \Lambda.
  \end{align*}
  We recall that $\mathbf{V} = \mathbf{V}_S\times\mathbf{V}_D$ with the spaces $\mathbf{V}_S$, $\mathbf{V}_D$ are defined  in \eqref{Stokes-spaces} and \eqref{Darcy-spaces}, and the norm associated with the space $\mathbf{W}$ is
  \begin{align}
  \|\mathbf{u}, p, \lambda\|_{\mathbf{W}} 
  & = (\mu\|\mathbf{u}_S\|_{1,S}^2+\beta_{\tau}\|\mathbf{u}_S\cdot\bm{\tau}\|_{\Gamma}^2 + K^{-1}\|\mathbf{u}_D\|_{D}^2 + K^{-1}\|\nabla\cdot\mathbf{u}_D\|_{D}^2 \nonumber\\
  & \qquad + \mu^{-1}\|q_S\|_S^2 + K\|q_D\|_D^2 + \mu^{-1}\|\lambda\|_{-\frac12,\Gamma}^2 + K\|\lambda\|_{\frac12,\Gamma}^2)^{1/2}.
  \end{align}
\end{theorem}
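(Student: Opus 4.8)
The plan is to obtain the theorem as an immediate corollary of the abstract result \Cref{thm-abstract}. Concretely, for each of the six boundary configurations I would (i) check that the corresponding Stokes and Darcy subproblems satisfy the Brezzi conditions \eqref{a-continuity}--\eqref{b-continuity} and the strong coercivity \eqref{a-coercivity-stronger} with constants independent of $\mu,\beta_{\tau},K$, and (ii) read off the coupled space $\mathbf{W}$ and the multiplier space $\Lambda$ from the abstract prescription $\mathbf{W}=\mathbf{V}_S\times Q_S\times\mathbf{V}_D\times Q_D\times(\Lambda_S\cap\Lambda_D)$. Almost all of the analytic content is already contained in \Cref{lemma-Stokes}, \Cref{lemma-Darcy}, \Cref{remark-Stokes} and \Cref{remark-Darcy}; what remains is mostly bookkeeping.

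\textbf{Step 1: subproblem well-posedness.} For each configuration the relevant Stokes sub-case is dictated by the boundary condition adjacent to $\Gamma$. If $\Gamma$ abuts a natural piece of $\partial\Omega_S$ (first label $\mathrm{N}$), the Stokes subproblem is exactly the one of \Cref{lemma-Stokes}, which supplies $\Lambda_S=\frac{1}{\sqrt{\mu}}H^{-1/2}(\Gamma)$ and the full pressure space $Q_S$; the coercivity \eqref{a-coercivity-stronger} is immediate because $A_S$ is coercive on all of $\mathbf{V}_S$ by Korn's inequality, which holds with a domain-only constant since $|\Gamma_S^E|>0$. If $\Gamma$ abuts an essential piece (first label $\mathrm{E}$), then $\mathbf{u}_S$ vanishes at the junction, the admissible normal traces on $\Gamma$ lie in $H^{1/2}_{00}(\Gamma)$, and $\Lambda_S=\frac{1}{\sqrt{\mu}}H^{-1/2}_{00}(\Gamma)$; the inf--sup argument of \Cref{lemma-Stokes} is re-run with $\mathbf{v}^{\lambda}$ built from the zero-extension of $\psi\mathbf{n}_S$, which is admissible precisely because $\psi\in H^{1/2}_{00}(\Gamma)$, and if moreover $\Gamma_S^N=\emptyset$ the pressure is taken mean-free as in \Cref{remark-Stokes}. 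The Darcy side is handled symmetrically: $A_D$ is coercive on $\ker B_D$ with constant $1$; the multiplier space is $\sqrt{K}H^{1/2}_{00}(\Gamma)$ or $\sqrt{K}H^{1/2}(\Gamma)$ according to whether $\Gamma$ abuts $\Gamma_D^N$ or $\Gamma_D^E$, with the auxiliary elliptic problem \eqref{construct-vd} reposed with the DtN map on the matching trace space (\Cref{lemma-Darcy}, \Cref{remark-Darcy}), and the Darcy pressure is mean-free iff $\Gamma_D^N=\emptyset$. The configurations $\mathrm{NE}^{*}$ and $\mathrm{EN}^{*}$ are covered by the same arguments, since the analysis only uses the nature of the boundary condition meeting $\Gamma$, not the remaining outer pieces.

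\textbf{Step 2: assembling the coupled spaces.} With the subproblem data in hand, \Cref{thm-abstract} applies, and I would then identify $\Lambda=\Lambda_S\cap\Lambda_D$ together with its intersection norm $\bigl(\|\cdot\|_{\Lambda_S}^2+\|\cdot\|_{\Lambda_D}^2\bigr)^{1/2}=\bigl(\mu^{-1}\|\cdot\|_{-1/2,\Gamma}^2+K\|\cdot\|_{1/2,\Gamma}^2\bigr)^{1/2}$, which is exactly the multiplier block of the stated $\mathbf{W}$-norm; the four possibilities for $(\Lambda_S,\Lambda_D)$ then reproduce the six displayed $\Lambda$'s, and $(\Lambda_S\cap\Lambda_D)'=\Lambda_S'+\Lambda_D'$ guarantees that $\mathcal{A}$ in \eqref{operator-A} maps $\mathbf{W}$ into $\mathbf{W}'$ as in \eqref{abstract-coupled-equ}. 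For the pressure block one collects $Q_S\times Q_D$ and, in the configurations where the boundary data leaves a one-dimensional combination of $(p_S,p_D,\lambda)$ undetermined (the rows carrying $/\mathbb{R}$), factors that combination out; that it is precisely the stated one and that $b$ vanishes along it follows from the divergence theorem on $\Omega_S$ and $\Omega_D$ together with the boundary conditions. On these (possibly quotiented) spaces \Cref{thm-abstract} gives that $\mathcal{A}:\mathbf{W}\to\mathbf{W}'$ is an isomorphism with $\|\mathcal{A}\|_{\mathcal{L}(\mathbf{W},\mathbf{W}')}\le C_1$ and $\|\mathcal{A}^{-1}\|_{\mathcal{L}(\mathbf{W}',\mathbf{W})}\le C_2^{-1}$, where $C_1,C_2$ depend only on the Brezzi constants of the two subproblems and hence are independent of $\mu,\beta_{\tau},K$.

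\textbf{Main obstacle.} The abstract step is mechanical; the delicate part is the interface bookkeeping in Steps 1--2: showing that the subscript $00$ attaches to $H^{\pm1/2}(\Gamma)$ exactly when the outer boundary meeting $\Gamma$ carries the complementary condition (essential for Stokes, natural for Darcy), that the fractional trace/extension estimates and the DtN bounds used in \Cref{lemma-Stokes}--\Cref{lemma-Darcy} carry over to the $H_{00}$-spaces with weight-independent constants, and that the mean-value constraint falls on precisely the combination of $(p_S,p_D,\lambda)$ along which $b$ degenerates. The rows $\mathrm{NE}$ and $\mathrm{EN}$ are the subtle ones: as discussed in \Cref{sec4:discrete analysis} they carry a near-kernel, so although the inf--sup constant stays positive and parameter-robust, this holds only in the carefully weighted norm for $\Lambda$ used here (and after the accompanying quotient), which is exactly the feature that later makes deflation necessary for the MINRES iteration.
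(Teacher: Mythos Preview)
Your proposal is correct and matches the paper's approach exactly: the paper states \Cref{Thm-wellposedness} without an explicit proof, simply noting that it follows from the well-posedness of the subproblems (\Cref{lemma-Stokes}, \Cref{lemma-Darcy}, \Cref{remark-Stokes}, \Cref{remark-Darcy}) via the abstract \Cref{thm-abstract}. You have supplied precisely this argument, together with the bookkeeping for the multiplier and quotient spaces that the paper leaves implicit.
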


We note that, due to the factor spaces in \Cref{remark-Stokes}, \Cref{remark-Darcy} and \Cref{Thm-wellposedness}, the problem data must satisfy compatibility conditions in order to ensure the solvability. For example, in \Cref{remark-Stokes}, we require that $(1, g_n)_{\Gamma}=0$.

\Cref{Thm-wellposedness} highlights a key challenge in ensuing uniform stability across different boundary condition configurations.  Specifically, in the NE and EN cases, the reliance on quotient spaces can complicate the practical implementation, as enforcing constraints directly in these spaces may require additional Lagrange multipliers which lead to dense blocks in the matrix of the discretized problem. To circumvent this difficulty, we next analyze the well-posedness of the Stokes-Darcy problem in the ``unconstrained'' space
\begin{align}\label{whole-space}
\widetilde{\mathbf{W}}= \sqrt{\mu}\,\mathbf{H}_{0, E}^1\left(\Omega_S\right) \cap \sqrt{\beta_{\tau}} \mathbf{L}_t^2(\Gamma) \times \frac{1}{\sqrt{{K}}} \mathbf{H}_{0, E}\left(\operatorname{div}; \Omega_D\right) \times \frac{1}{\sqrt{\mu}} L^2\left(\Omega_S\right) \times \sqrt{{K}} L^2\left(\Omega_D\right) \times \Lambda.
\end{align}
Here the use of standard $L^2$ simplifies implementation.
However, the space introduces a new challenge: inf-sup constant depends on physical parameters, which is discussed in the following theorem.

\begin{theorem}\label{thm-depend-para}
  For the NE and EN cases, the system is well-posed on the space $\widetilde{\mathbf{W}}$. Then the Stokes-Darcy operator $\mathcal{A}$ in \eqref{operator-A} is an isomorphism mapping $\widetilde{\mathbf{W}}$ to $\widetilde{\mathbf{W}}^{\prime}$ such that $\|\mathcal{A}\|_{\mathcal{L}\left(\widetilde{\mathbf{W}}, \widetilde{\mathbf{W}}^{\prime}\right)} \leq C_1$ and $\left\|\mathcal{A}^{-1}\right\|_{\mathcal{L}\left(\widetilde{\mathbf{W}}^{\prime}, \widetilde{\mathbf{W}}\right)} \leq$ $C_2^{-1}$, where $C_1$ and $C_2$ depend on the physical parameters $\mu$ and ${K}$.
\end{theorem}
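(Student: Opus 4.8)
The plan is to verify, for the NE configuration, that $\mathcal{A}$ satisfies the Brezzi conditions on $\widetilde{\mathbf{W}} = \widetilde{\mathbf{V}}\times\widetilde{Q}$, with $\widetilde{\mathbf{V}} = \mathbf{V}_S\times\mathbf{V}_D$ and $\widetilde{Q} = \frac{1}{\sqrt{\mu}}L^2(\Omega_S)\times\sqrt{K}L^2(\Omega_D)\times\Lambda$ as in \eqref{whole-space}, the only difference from the corresponding line of \Cref{Thm-wellposedness} being that the inf-sup constant will now be allowed to depend on $\mu$ and $K$; the EN case is obtained by interchanging the roles of the Stokes and Darcy subproblems (and of $\mu$ and $K$). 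The continuity of $A$ and $B$ (the analogues of \eqref{a-continuity} and \eqref{b-continuity}) is inherited word for word from the proofs of \Cref{lemma-Stokes} and \Cref{lemma-Darcy}, since enlarging the pressure--multiplier space from the quotient space of \Cref{Thm-wellposedness} to $\widetilde{Q}$ affects neither the Cauchy--Schwarz nor the trace estimates used there; this yields $\|\mathcal{A}\|_{\mathcal{L}(\widetilde{\mathbf{W}},\widetilde{\mathbf{W}}')}\le C_1$. Similarly, the kernel of $B$ on $\widetilde{\mathbf{W}}$ --- computed with test functions ranging over all of $\widetilde{Q}$, constants included --- is contained in $\operatorname{ker}B_S\times\operatorname{ker}B_D$, on which \eqref{a-coercivity-stronger} gives coercivity of $A$ with constant $\min\{\alpha_S,\alpha_D\}$, exactly as in \Cref{thm-abstract}. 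Hence only the inf-sup condition \eqref{inf-sup} has to be re-examined on $\widetilde{\mathbf{W}}$.

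Let $n = (0,\mathbf{1}_{\Omega_D},\mathbf{1}_\Gamma)\in\widetilde{Q}$ denote the constant mode that is quotiented out in the NE line of \Cref{Thm-wellposedness}; writing $\widetilde{Q} = Q_0\oplus\mathbb{R}n$ with $Q_0 = n^{\perp}$ in $\widetilde{Q}$ (isometric to the pressure--multiplier space of \Cref{Thm-wellposedness}), decompose any $(p,\lambda)\in\widetilde{Q}$ as $(p,\lambda) = (p,\lambda)_0 + c\,n$, so that $\|(p,\lambda)\|_{\widetilde{Q}}^2 = \|(p,\lambda)_0\|_{\widetilde{Q}}^2 + c^2\|n\|_{\widetilde{Q}}^2$. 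For $(p,\lambda)_0$, \Cref{Thm-wellposedness} --- equivalently, the combination, along the lines of \Cref{thm-abstract}, of the Stokes construction of \Cref{lemma-Stokes} and the Darcy construction of \Cref{remark-Darcy} --- supplies $\mathbf{v}_0\in\widetilde{\mathbf{V}}$ with $\|\mathbf{v}_0\|_{\widetilde{\mathbf{V}}}\lesssim\|(p,\lambda)_0\|_{\widetilde{Q}}$ and $b(\mathbf{v}_0,(p,\lambda)_0)\gtrsim\|(p,\lambda)_0\|_{\widetilde{Q}}^2$, with constants independent of $\mu$ and $K$. For the constant mode one uses the crucial algebraic fact that $b(\cdot,n)$ collapses: since $\mathbf{v}_D\cdot\mathbf{n}_D=0$ on $\Gamma_D^E$ and $\Gamma_D^N=\emptyset$ in the NE case, the divergence and normal-trace contributions of $\mathbf{v}_D$ cancel, leaving $b(\mathbf{v},n) = -\int_\Gamma\mathbf{v}_S\cdot\mathbf{n}_S$. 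Thus $n$ is detected only through the net flux of the Stokes velocity across $\Gamma$: fixing $\mathbf{w}_S\in\mathbf{V}_S$ with $\mathbf{w}_S=\mathbf{0}$ on $\Gamma_S^E$, $\mathbf{w}_S\cdot\bm{\tau}=0$ on $\Gamma$ and $\int_\Gamma\mathbf{w}_S\cdot\mathbf{n}_S=1$ (so that $\|\mathbf{w}_S\|_{\mathbf{V}_S}^2 = \mu\|\mathbf{w}_S\|_{1,S}^2\lesssim\mu$) and rescaling, one obtains $\mathbf{v}_c = (-\rho c\,\mathbf{w}_S,\mathbf{0})$ with $b(\mathbf{v}_c,c\,n)=\rho c^2$ and $\|\mathbf{v}_c\|_{\widetilde{\mathbf{V}}}\lesssim\rho|c|\sqrt{\mu}$. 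Since $\|n\|_{\widetilde{Q}}^2\sim\mu^{-1}+K$, this only gives $b(\mathbf{v}_c,c\,n)\gtrsim(1+\mu K)^{-1/2}\,\|c\,n\|_{\widetilde{Q}}\,\|\mathbf{v}_c\|_{\widetilde{\mathbf{V}}}$, which is the source of the parameter dependence.

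To conclude, take $\mathbf{v}=\mathbf{v}_0 + t\,\mathbf{v}_c$ with $t>0$ to be chosen, and expand $b(\mathbf{v},(p,\lambda)) = b(\mathbf{v}_0,(p,\lambda)_0) + t\,b(\mathbf{v}_c,c\,n) + c\,b(\mathbf{v}_0,n) + t\,b(\mathbf{v}_c,(p,\lambda)_0)$. The last two terms are cross terms; by the continuity of $b$ and the bounds above they are dominated, after a Young's inequality, by arbitrarily small fractions of the two diagonal terms provided $t$ is chosen of the order dictated by the $\sqrt{\mu}$-scale of $\mathbf{v}_c$ relative to the scales of $\mathbf{v}_0$ and of $n$. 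This yields $b(\mathbf{v},(p,\lambda))\gtrsim\gamma(\mu,K)\,\|(p,\lambda)\|_{\widetilde{Q}}\,\|\mathbf{v}\|_{\widetilde{\mathbf{V}}}$ with $\gamma(\mu,K)$ of the order $(1+\mu K)^{-1/2}$ up to geometric constants, i.e., the inf-sup condition \eqref{inf-sup} on $\widetilde{\mathbf{W}}$ with a parameter-dependent constant. Combined with the continuity and kernel-coercivity of the first paragraph, the Brezzi theory gives that $\mathcal{A}:\widetilde{\mathbf{W}}\to\widetilde{\mathbf{W}}'$ is an isomorphism with $\|\mathcal{A}^{-1}\|_{\mathcal{L}(\widetilde{\mathbf{W}}',\widetilde{\mathbf{W}})}\le C_2^{-1}$, $C_2$ depending on $\mu$ and $K$. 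I expect the genuinely delicate point to be this last step: the individual modes $(0,\mathbf{1}_{\Omega_D},0)$ and $(0,0,\mathbf{1}_\Gamma)$ are each controllable with uniform constants, and the deterioration emerges only in the precise linear combination $n$ for which the Darcy-flux contributions cancel --- so one must isolate $n$, avoid double counting it against $(p,\lambda)_0$, and track how the $(1+\mu K)^{-1/2}$ factor survives the absorption of the cross terms.
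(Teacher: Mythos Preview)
Your proposal is correct and arrives at the same parameter dependence $\gamma\sim\min\{1,(\mu K)^{-1/2}\}\sim(1+\mu K)^{-1/2}$ for the NE case (and symmetrically for EN), but the organization differs from the paper's. The paper does not perform the orthogonal decomposition $\widetilde{Q}=Q_0\oplus\mathbb{R}n$; instead it works directly on the slice $\{(q_S,\mathbf{1}_{\Omega_D},\mathbf{1}_\Gamma):q_S\in Q_S\}$, constructs $\mathbf{u}_S$ as the solution of an auxiliary Stokes problem with data $(\mu^{-1}q_S,\mu^{-1}\mathbf{1})$ on the right-hand side (so that $(B_S\mathbf{u}_S,q_S)+(T_S\mathbf{u}_S,\mathbf{1})=\mu^{-1}\|q_S\|_S^2+\mu^{-1}\|\mathbf{1}\|_{-1/2,\Gamma}^2$), sets $\mathbf{u}_D=\mathbf{0}$, and then uses the same divergence-theorem cancellation you identified together with the norm equivalences $\|\mathbf{1}\|_{-1/2,\Gamma}\sim\|\mathbf{1}\|_{1/2,\Gamma}\sim\|\mathbf{1}\|_D$ to extract the factor $\min\{(\mu K)^{-1},1\}$. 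Your route---invoking \Cref{Thm-wellposedness} as a black box on $Q_0$, building a fixed flux-$1$ Stokes function for the $n$-direction, and then gluing the two test functions with a Young-inequality absorption of the cross terms---is more explicit about how the full inf-sup on $\widetilde{\mathbf{W}}$ (as opposed to just the near-kernel slice) is obtained, at the cost of carrying and bounding those cross terms. The paper's route avoids the cross-term bookkeeping entirely by building a single test function that simultaneously handles $q_S$ and the constant, but leaves the passage from the slice inf-sup \eqref{inf-sup-ND} to the full inf-sup on $\widetilde{Q}$ implicit.
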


\begin{proof}
  The continuity and coercivity conditions naturally hold, so we focus on the analysis of the inf-sup condition on the space $\widetilde{\mathbf{W}}$.  It is evident that the difference between $\mathbf{W}$ and $\widetilde{\mathbf{W}}$ lies in a one-dimensional space.
  Therefore, our focus is on establishing the 
  inf-sup condition within this space. We begin by presenting 
  the specific form of the inf-sup condition for each case.

In the NE case, the one-dimensional near-kernel space is characterized by $q_D = \mathbf{1}$ and $\phi = \mathbf{1}$. Let $\mathbf{u} = (\mathbf{u}_S, \mathbf{u}_D)$, then there is a constant $\gamma = C\min\big\{\frac{1}{\sqrt{\mu K}},1 \big\}$ such that
    \begin{align}\label{inf-sup-ND}
        & \inf_{q_S\in Q_S}\sup_{0\neq\mathbf{u}\in\mathbf{V}}\frac{(B_S\mathbf{u}_S,q_S) +(B_D\mathbf{u}_D,\mathbf{1}) + (T_S\mathbf{u}_S,\mathbf{1}) + (T_D\mathbf{u}_D,\mathbf{1})}{(\|\mathbf{u}_S\|_{\mathbf{V}_S}^2+ \|\mathbf{u}_D\|_{\mathbf{V}_D}^2)^{1/2} (\|q_S\|_{Q_S}^2 + \|\mathbf{1}\|_{Q_D}^2 + \|\mathbf{1}\|_{\Lambda}^2)^{1/2}} \geq \gamma. 
    \end{align}
 Similarly, in the EN case, the one-dimension near-kernel space is $q_S = \mathbf{1}, \phi = \mathbf{1}$. The inf-sup condition ensures the existence of a constant $\gamma = C\min\big\{\sqrt{\mu K},1 \big\}$ such that
    \begin{align}\label{inf-sup-DN}
        & \inf_{q_D\in Q_D}\sup_{0\neq\mathbf{u}\in\mathbf{V}}\frac{(B_S\mathbf{u}_S,\mathbf{1}) +(B_D\mathbf{u}_D,q_D) + (T_S\mathbf{u}_S,\mathbf{1})+ (T_D\mathbf{u}_D,\mathbf{1})}{(\|\mathbf{u}_S\|_{\mathbf{V}_S}^2+ \|\mathbf{u}_D\|_{\mathbf{V}_D}^2)^{1/2} (\|\mathbf{1}\|_{Q_S}^2 + \|q_D\|_{Q_D}^2 + \|\mathbf{1}\|_{\Lambda}^2)^{1/2}} \geq \gamma.
    \end{align}

In the following, we first establish the inf-sup condition \eqref{inf-sup-ND} for the NE case in detail. To begin, we construct an auxiliary Stokes subproblem: given $q_S$ and $\phi$, find $\mathbf{u}_S$ such that
    \begin{subequations}\label{cons-us}
    \begin{alignat}{2}
         2\mu(\bm{\epsilon}(\mathbf{u}_S), \bm{\epsilon}(\mathbf{v}_S))_S + \beta_{\tau}(T_t \mathbf{u}_S, T_t \mathbf{v}_S)_{\Gamma} - (\nabla\cdot\mathbf{v}_S,p_S)_S + (T_S\mathbf{v}_S,\lambda_S)_{\Gamma} = 0, &\quad \forall\,\mathbf{v}_S\in\mathbf{V}_S, \label{cons-us-1}\\
         -(\nabla\cdot\mathbf{u}_S,r_S)_S = \mu^{-1}(q_S, r_S)_S,& \quad  \forall\,r_S\in Q_S, \label{cons-us-2}\\
         (T_S\mathbf{u}_S,\mu_S)_{\Gamma} = \mu^{-1}(\phi,\mu_S)_{\Gamma},  &\quad \forall\,\mu_S\in\Lambda.\label{cons-us-3}
    \end{alignat}
    \end{subequations}
    According to the Stokes inf-sup condition, we have the following estimate 
    \begin{align}\label{bound-us}
        \|\mathbf{u}_S\|_{\mathbf{V_S}} \lesssim  \mu^{-1}\left(\|q_S\|_{S}^2 + \|\phi\|_{-\frac{1}{2},\Gamma}^2\right)^{1/2}.
    \end{align}
    Using the fact $q_D = \mathbf{1}$ and $\phi = \mathbf{1}$, and applying integration by parts, we obtain
    \begin{align}\label{cons-ud}
        (B_D\mathbf{u}_D, q_D) + (T_D\mathbf{u}_D, \phi) = -(\nabla\cdot\mathbf{u}_D, \mathbf{1})_D + (\mathbf{u}_D\cdot\mathbf{n}_D, \mathbf{1})_{\Gamma} = (\mathbf{u}_D, \nabla\mathbf{1})_D = 0.
    \end{align}
    According to \eqref{cons-us-2}-\eqref{cons-us-3}, \eqref{cons-ud}, and the norm equivalence, we derive the following inequality,
    \begin{align*}
        & (B_S\mathbf{u}_S,q_S) +(B_D\mathbf{u}_D,\mathbf{1}) + (T_S\mathbf{u}_S,\mathbf{1})+ (T_D\mathbf{u}_D,\mathbf{1}) \nonumber \\
        &  = - (\nabla\cdot\mathbf{u}_S, q_S)_D + (\mathbf{u}_S\cdot\mathbf{n}_S, \mathbf{1})_{\Gamma}  = \mu^{-1}\|q_S\|_{S}^2 + \mu^{-1} \|\mathbf{1}\|_{-\frac{1}{2},\Gamma}^2 \nonumber \\
        & \gtrsim \mu^{-1}\|q_S\|_{S}^2 + \frac{1}{3}\mu^{-1} \|\mathbf{1}\|_{-\frac{1}{2},\Gamma}^2 +  \frac{1}{3}(\mu {K})^{-1} {K}\|\mathbf{1}\|_{D}^2
         +  \frac{1}{3}(\mu {K})^{-1} {K} \|\mathbf{1}\|_{\frac{1}{2},\Gamma}^2 \nonumber\\
         &  \gtrsim \min\left\{(\mu {K})^{-1},1\right\} \left(\mu^{-1}\|q_S\|_{S}^2 + \mu^{-1} \|\mathbf{1}\|_{-\frac{1}{2},\Gamma}^2 +   {K}\|\mathbf{1}\|_{D}^2
         + {K} \|\mathbf{1}\|_{\frac{1}{2},\Gamma}^2\right).
    \end{align*} 
    Then letting $\mathbf{u}_D = \mathbf{0}$, combining this with \eqref{bound-us}, we obtain the inf-sup condition \eqref{inf-sup-ND} by choosing $\gamma^2 = C\min\left\{(\mu K)^{-1},1\right\}$.

    For the EN case, the proof is similar. We construct the Darcy subproblem as follows,
    \begin{subequations}\label{cons-ud-DN}
    \begin{alignat}{2}
         K^{-1}(\mathbf{u}_D,\mathbf{v}_D)_D - (\nabla\cdot\mathbf{v}_D,p_D)_D + (T_D\mathbf{v}_D,\lambda_D)_{\Gamma} = 0, &\quad \forall\,\mathbf{v}_D\in\mathbf{V}_D, \label{cons-ud-1}\\
         -(\nabla\cdot\mathbf{u}_D,r_D)_D= K(q_D, r_D)_D, &\quad  \forall\, r_D\in Q_D, \label{cons-ud-2}\\
         (T_D\mathbf{u}_D,\mu_D)_{\Gamma} = K(\phi,\mu_D)_{\Gamma},  &\quad \forall\,\mu_D\in\Lambda.\label{cons-ud-3}
    \end{alignat}
    \end{subequations}
    According the Darcy inf-sup condition, we have
    \begin{align*}
        \|\mathbf{u}_D\|_{\mathbf{V_D}} \lesssim  K\left(\|q_D\|_{D}^2 + \|\phi\|_{\frac{1}{2},\Gamma}^2\right)^{1/2}.
    \end{align*}
    In this case, the one-dimensional near-kernel space is characterized by $q_S = \mathbf{1}$ and $\phi = \mathbf{1}$ and, thus, we have the following estimate 
    \begin{align*}
        & (B_S\mathbf{u}_S,\mathbf{1}) +(B_D\mathbf{u}_D,q_D) + (T_S\mathbf{u}_S,\mathbf{1})+ (T_D\mathbf{u}_D,\mathbf{1}) \nonumber \\
        &  = - (\nabla\cdot\mathbf{u}_D, q_D)_D + (\mathbf{u}_D\cdot\mathbf{n}_D, \mathbf{1})_{\Gamma} = {K}\|q_D\|_{D}^2 + {K}\|\mathbf{1}\|_{\frac{1}{2},\Gamma}^2 \nonumber \\
        & \gtrsim {K}\|q_D\|_{D}^2 + \frac{1}{3} (\mu {K}) \mu^{-1} \|\mathbf{1}\|_{S}^2 +  \frac{1}{3}(\mu {K})\mu^{-1}\|\mathbf{1}\|_{-\frac{1}{2},\Gamma}^2
         +  \frac{1}{3}{K}\|\mathbf{1}\|_{\frac{1}{2},\Gamma}^2 \nonumber\\
         & \gtrsim  \min\left\{ \mu {K}, 1\right\}\left(\mu^{-1}\|\mathbf{1}\|_{S}^2 + \mu^{-1} \|\mathbf{1}\|_{-\frac{1}{2},\Gamma}^2 +   {K}\|q_D\|_{D}^2
         + {K} \|\mathbf{1}\|_{\frac{1}{2},\Gamma}^2\right).
    \end{align*}
    We conclude the proof of the inf-sup condition \eqref{inf-sup-DN} by setting 
    $\mathbf{u}_S = 0$. 
\end{proof}

To summarize, we have obtained the well-posedness results for all cases of the coupled problem
\cref{weak-form}. For the NN, EE, NE$^{*}$, and EN$^*$ cases, the inf-sup constants are independent of both physical and
discretization parameters. However, for the NE and EN cases, the inf-sup constants depend on the physical parameters 
in the ``unconstrained'' space \cref{whole-space}, whereas they become parameter-independent in the space that excludes the one-dimensional near-kernel space.

\subsection{Preconditioning}
Building on the well-posedness results from \cref{Thm-wellposedness} and \cref{thm-depend-para}, and leveraging the operator preconditioning framework \cite{Kent2011Preconditioning}, the canonical preconditioner for the coupled Stokes-Darcy problem is given by
\begin{equation}\label{preconditioner}
\mathcal{B} = \operatorname{diag}(-2\mu\nabla\cdot\bm{\epsilon} + \beta_{\tau} T_t^{\prime} T_t,\, {K}^{-1} (I -\nabla\nabla\cdot),\, (2\mu)^{-1}I,\,  KI,\, S)^{-1},
\end{equation}
where $S$ depends on the boundary condition that intersects the interface and is defined as follows:
\begin{equation}\label{eq:interface_preconditioner}
  \begin{aligned}
    &(\text{NN}) \,\,
    &S = \mu^{-1}(-\Delta + I)^{-1/2} + {K} (-\Delta + I)_{00}^{1/2},\\
    &(\text{EE}) \,\, 
    &S = \mu^{-1}(-\Delta + I)_{00}^{-1/2} + {K} (-\Delta + I)^{1/2},\\
    &(\text{NE}) \ \text{and} \ (\text{NE}^{*}) \,\,
    &S = \mu^{-1}(-\Delta + I)^{-1/2} + {K} (-\Delta + I)^{1/2},\\
    &(\text{EN}) \ \text{and} \ (\text{EN}^{*}) \,\, 
    &S = \mu^{-1}(-\Delta + I)_{00}^{-1/2} + {K} (-\Delta + I)_{00}^{1/2}.
  \end{aligned}
\end{equation}
\Cref{Thm-wellposedness} leads to the following estimates about the
conditioning of the Stokes-Darcy problem in NN and EE cases with preconditioner
\eqref{preconditioner}.

\begin{theorem}\label{thm-cond-num}
   For NN, EE, NE\,$^{*}$ and EN\,$^*$ cases, the condition number of the preconditioned operator satisfies $\kappa(\mathcal{BA}) = \mathcal{O}(1)$.
\end{theorem}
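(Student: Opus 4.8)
The plan is to invoke the operator preconditioning framework of \cite{Kent2011Preconditioning}: since $\mathcal{B}$ is a symmetric positive-definite operator (a block-diagonal Riesz map), the condition number $\kappa(\mathcal{BA})$ is bounded by the product of the norms $\|\mathcal{A}\|_{\mathcal{L}(\mathbf{W},\mathbf{W}')}$ and $\|\mathcal{A}^{-1}\|_{\mathcal{L}(\mathbf{W}',\mathbf{W})}$ provided that $\mathcal{B}$ is spectrally equivalent, uniformly in the parameters, to the Riesz map $R_{\mathbf{W}}:\mathbf{W}'\to\mathbf{W}$ induced by the norm of $\mathbf{W}$ appearing in \Cref{Thm-wellposedness}. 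By that theorem the first two quantities are bounded by $C_1$ and $C_2^{-1}$ with $C_1,C_2$ independent of $\mu,K$ (and of the mesh), so the whole argument reduces to checking that each diagonal block of $\mathcal{B}^{-1}$ in \eqref{preconditioner} realizes, up to parameter-independent constants, the corresponding component norm in $\|\cdot\|_{\mathbf{W}}$.

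Concretely, I would go block by block. For the Stokes velocity block, $-2\mu\nabla\cdot\bm{\epsilon}+\beta_\tau T_t' T_t$ induces exactly the squared norm $\mu\|\bm{\epsilon}(\mathbf{u}_S)\|_S^2+\beta_\tau\|\mathbf{u}_S\cdot\bm\tau\|_\Gamma^2$, which by Korn's inequality (valid since $|\Gamma_S^E|>0$) is equivalent, uniformly in the parameters, to $\mu\|\mathbf{u}_S\|_{1,S}^2+\beta_\tau\|\mathbf{u}_S\cdot\bm\tau\|_\Gamma^2=\|\mathbf{u}_S\|_{\mathbf{V}_S}^2$; the Korn constant depends only on $\Omega_S$ and the boundary partition, not on $\mu$ or $\beta_\tau$. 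The Darcy velocity block $K^{-1}(I-\nabla\nabla\cdot)$ induces $K^{-1}\|\mathbf{u}_D\|_D^2+K^{-1}\|\nabla\cdot\mathbf{u}_D\|_D^2=\|\mathbf{u}_D\|_{\mathbf{V}_D}^2$ by definition, so this is an identity. The pressure blocks $(2\mu)^{-1}I$ and $KI$ give $\tfrac12\mu^{-1}\|p_S\|_S^2$ and $K\|p_D\|_D^2$, matching $\|p_S\|_{Q_S}^2$, $\|p_D\|_{Q_D}^2$ up to the harmless factor $\tfrac12$ (in the EE case one works on the quotient by constants, where these remain the correct norms). For the interface block, $S^{-1}$ must realize the sum-space norm on $\Lambda=\tfrac1{\sqrt\mu}H^{\pm1/2}(\Gamma)\cap\sqrt K H^{\mp1/2}(\Gamma)$; here one uses that for a Hilbert space written as an intersection $X\cap Y$ the Riesz map is $R_X^{-1}+R_Y^{-1}$ in the dual, i.e. the preconditioner for the intersection norm is $(R_X^{-1}+R_Y^{-1})^{-1}$, which is precisely the additive form $\mu^{-1}(-\Delta+I)^{\mp1/2}+K(-\Delta+I)^{\pm1/2}$ (with the $H_{00}$ variants and appropriate $\pm$ signs selected per the table \eqref{eq:interface_preconditioner} according to which outer boundary meets $\Gamma$). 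The spectral equivalence of $(-\Delta+I)^{s}$ with the $H^s(\Gamma)$ inner product (and of the $_{00}$ version with $H^{s}_{00}$) is standard and parameter-free.

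Having established block-wise spectral equivalence with parameter-independent constants, I would conclude: $\mathcal{B}$ is uniformly equivalent to $R_{\mathbf{W}}^{-1}$ acting $\mathbf{W}\to\mathbf{W}'$, hence $\mathcal{B}\mathcal{A}$ is similar to a self-adjoint operator on $\mathbf{W}$ whose spectrum lies in $[c,C]$ with $c,C$ depending only on $C_2$, $C_1$ and the (parameter-independent) block equivalence constants, giving $\kappa(\mathcal{BA})=C/c=\mathcal{O}(1)$. For the $\mathrm{NE}^*$ and $\mathrm{EN}^*$ cases the same reasoning applies verbatim, now using the well-posedness on $\mathbf{W}$ with the ordinary (non-quotient) $L^2$ pressure spaces, which \Cref{Thm-wellposedness} also covers. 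The main obstacle is the interface block: one must verify carefully that the additive operator in \eqref{eq:interface_preconditioner} is genuinely the inverse Riesz map of the intersection-space norm with a constant that does not degenerate as $\mu K\to 0$ or $\infty$ — i.e. that the ``$\min$''-type loss seen in \Cref{thm-depend-para} does \emph{not} occur here because in the starred/non-starred NN, EE cases the full space $\mathbf{W}$ (no near-kernel) is used, so the inf-sup constant from \Cref{Thm-wellposedness} is already parameter-robust and the fractional-Laplacian identities are themselves parameter-free.
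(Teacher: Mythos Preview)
Your proposal is correct and follows the same approach as the paper: the paper does not give an explicit proof of this theorem but states it as an immediate consequence of \Cref{Thm-wellposedness} together with the operator preconditioning framework of \cite{Kent2011Preconditioning}, and your argument simply fills in the standard details of that deduction (block-wise verification that $\mathcal{B}^{-1}$ realizes the $\mathbf{W}$-norm, then bounding $\kappa(\mathcal{BA})$ by $C_1/C_2$).
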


From the results of \cref{thm-depend-para}, it is evident that the smallest eigenvalue in magnitude—and consequently, the condition number—scales with the physical parameters in the NE and EN cases on the ``unconstrained” space $\widetilde{\mathbf{W}}$ \cref{whole-space}.  However, the effective condition number $\kappa_{\text{eff}}$ is independent of the physical parameters when the smallest eigenvalue in magnitude is removed, as in the space $\mathbf{W}$ defined in \cref{Thm-wellposedness}. 
\begin{theorem}\label{thm-eff-cond-num}
    For the NE and EN cases, the condition number of the preconditioned operator satisfies $\kappa(\mathcal{BA}) := \|\mathcal{B}\mathcal{A}\|_{\widetilde{\mathbf{W}}\mapsto\widetilde{\mathbf{W}}} \|(\mathcal{B}\mathcal{A})^{-1}\|_{\widetilde{\mathbf{W}}\mapsto\widetilde{\mathbf{W}}} = \mathcal{O}(\min\{ 1/ \sqrt{\mu K}, 1 \})$ or $\mathcal{O}(\min\{ \sqrt{\mu K}, 1 \})$. However, the effective condition number satisfies $\kappa_{\text{eff}}(\mathcal{BA}) :=  \|\mathcal{B}\mathcal{A}\|_{\mathbf{W}\mapsto\mathbf{W}} \|(\mathcal{B}\mathcal{A})^{-1}\|_{\mathbf{W}\mapsto\mathbf{W}} = \mathcal{O}(1).$
\end{theorem}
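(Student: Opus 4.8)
The plan is to obtain both estimates from the operator-preconditioning identity, feeding in the two stability analyses already available: \Cref{thm-depend-para} on the full space $\widetilde{\mathbf{W}}$ and \Cref{Thm-wellposedness} on the reduced space $\mathbf{W}$. By construction, the diagonal blocks of $\mathcal{B}^{-1}$ in \eqref{preconditioner}--\eqref{eq:interface_preconditioner} are precisely the Riesz operators inducing the $\widetilde{\mathbf{W}}$-norm, so $\mathcal{B}$ is the inverse Riesz map of $\widetilde{\mathbf{W}}$; hence $\mathcal{B}\mathcal{A}$ is self-adjoint with respect to $(\cdot,\cdot)_{\widetilde{\mathbf{W}}}$, and since the Riesz map is an isometry, $\|\mathcal{B}\mathcal{A}\|_{\widetilde{\mathbf{W}}\mapsto\widetilde{\mathbf{W}}}=\|\mathcal{A}\|_{\mathcal{L}(\widetilde{\mathbf{W}},\widetilde{\mathbf{W}}^{\prime})}$ and $\|(\mathcal{B}\mathcal{A})^{-1}\|_{\widetilde{\mathbf{W}}\mapsto\widetilde{\mathbf{W}}}=\|\mathcal{A}^{-1}\|_{\mathcal{L}(\widetilde{\mathbf{W}}^{\prime},\widetilde{\mathbf{W}})}$. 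The first assertion is then immediate from \Cref{thm-depend-para}: the continuity constant is parameter-independent, while the inverse bound carries exactly the inf-sup constant of \eqref{inf-sup-ND}--\eqref{inf-sup-DN}, which scales like $\min\{1/\sqrt{\mu K},1\}$ in the NE case and like $\min\{\sqrt{\mu K},1\}$ in the EN case, and $\kappa(\mathcal{B}\mathcal{A})$ is the product of these two factors.

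For the effective condition number I would pass to the reduced space. Realize the quotient space $\mathbf{W}$ of \Cref{Thm-wellposedness} as a closed codimension-one subspace of $\widetilde{\mathbf{W}}$---the $\widetilde{\mathbf{W}}$-orthogonal complement $N^{\perp}$ of the one-dimensional near-kernel $N$ found in the proof of \Cref{thm-depend-para} (the constant mode of $(q_D,\phi)$ for NE, of $(q_S,\phi)$ for EN)---equipped with the restricted norm, so that the quotient norm equals $\|\cdot\|_{\widetilde{\mathbf{W}}}$ on $N^{\perp}$. With $\iota\colon\mathbf{W}\hookrightarrow\widetilde{\mathbf{W}}$ the inclusion, the Riesz map of $\mathbf{W}$ is the compression of that of $\widetilde{\mathbf{W}}$ and the restricted operator $\iota^{*}\mathcal{A}\iota\colon\mathbf{W}\to\mathbf{W}^{\prime}$ is the $\mathcal{A}$ of \Cref{Thm-wellposedness}; hence, by the same isometry argument, the reduced preconditioned operator---which is what $\kappa_{\mathrm{eff}}(\mathcal{B}\mathcal{A})$ measures---has $\kappa_{\mathrm{eff}}(\mathcal{B}\mathcal{A})=\|\iota^{*}\mathcal{A}\iota\|_{\mathcal{L}(\mathbf{W},\mathbf{W}^{\prime})}\|(\iota^{*}\mathcal{A}\iota)^{-1}\|_{\mathcal{L}(\mathbf{W}^{\prime},\mathbf{W})}$, and both factors are bounded by the parameter-independent constants of \Cref{Thm-wellposedness}, whence $\kappa_{\mathrm{eff}}(\mathcal{B}\mathcal{A})=\mathcal{O}(1)$. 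The spectral picture behind this: $\mathcal{B}\mathcal{A}$ has real spectrum in $[-a_1,-a_2]\cup[a_3,a_4]$ with $a_1,a_4=\mathcal{O}(1)$, and exactly one eigendirection accounts for the small numbers $a_2,a_3$---if two $\widetilde{\mathbf{W}}$-orthogonal eigenvectors had eigenvalue modulus below $\tfrac12 C_2$, their span would meet $\mathbf{W}$ nontrivially, and for $0\neq w$ in the intersection the chain $\sup_{v\in\mathbf{W}}\langle\mathcal{A}w,v\rangle/\|v\|_{\mathbf{W}}\le\|\mathcal{B}\mathcal{A}w\|_{\widetilde{\mathbf{W}}}<\tfrac12 C_2\|w\|_{\mathbf{W}}$ would contradict the inf-sup bound of \Cref{Thm-wellposedness}; deflating that single outlier restores the $\mathcal{O}(1)$ estimate.

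The main obstacle is the reconciliation underlying this spectral description. Since $\mathcal{B}$ is the Riesz map of the full space $\widetilde{\mathbf{W}}$ while the near-kernel is genuinely \emph{near} rather than an exact kernel---no element of the form ``constant pressure and multiplier, zero velocity'' lies exactly in $\ker\mathcal{A}$---the subspace $\mathbf{W}=N^{\perp}$ need not be exactly $\mathcal{B}\mathcal{A}$-invariant, so one has to show that the discrepancy is a rank-one perturbation confined to the single degenerating eigendirection, i.e.\ that $N^{\perp}$ and the orthogonal complement of the outlier eigenvector agree up to this one-dimensional ambiguity, making ``restricting to $\mathbf{W}$'' and ``deflating the small eigenvalue'' the same operation. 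Once that is in place, the parameter-independence of the remaining spectrum---and hence $\kappa_{\mathrm{eff}}(\mathcal{B}\mathcal{A})=\mathcal{O}(1)$---follows verbatim from the uniform inf-sup estimate of \Cref{Thm-wellposedness}.
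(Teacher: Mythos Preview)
Your approach is correct and mirrors the paper's reasoning. In fact, the paper does not supply a formal proof of this theorem: it is stated as an immediate consequence of the operator preconditioning framework together with \Cref{Thm-wellposedness} (for $\kappa_{\mathrm{eff}}$) and \Cref{thm-depend-para} (for $\kappa$), exactly as you outline in your first two paragraphs.

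The subtlety you raise in the final paragraph---that the near-kernel direction $N$ is not an exact eigenspace of $\mathcal{B}\mathcal{A}$, so restricting to $N^{\perp}$ and deflating the smallest-in-magnitude eigenvalue are not literally the same operation---is addressed in the paper only in the remark immediately following the theorem, where the Courant--Fischer--Weyl min-max principle is invoked. Since $\mathbf{W}$ is a codimension-one subspace of $\widetilde{\mathbf{W}}$ on which the Brezzi constants are parameter-independent by \Cref{Thm-wellposedness}, the min-max characterization bounds the second-smallest-in-magnitude eigenvalue of the self-adjoint operator $\mathcal{B}\mathcal{A}$ on $\widetilde{\mathbf{W}}$ away from zero uniformly in $\mu$ and $K$; this is exactly the spectral statement you sketch with your two-vector argument. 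So the obstacle you identify is real but mild, and the paper's resolution is the min-max route you already gesture toward rather than an explicit rank-one perturbation analysis.
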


\begin{remark}
  By the Courant–Fischer–Weyl min-max principle, \cref{Thm-wellposedness} and \cref{thm-eff-cond-num} indicate that we can divide $\sigma(\mathcal{BA})$, the spectrum of $\mathcal{BA}$, into two parts: $\{\lambda_0\}$ and $\sigma_1(\mathcal{BA})$.  The eigenvalue $\lambda_0$, which has the smallest absolute value, depends on the physical parameters $\mu$ and ${K}$. On the other hand, $\sigma_1(\mathcal{BA}) \subset [-C, -\gamma] \cup [\gamma, C]$ contains the remaining eigenvalues that are bounded with respect to the parameters.  Based on the theory of the MINRES method~\cite{simoncini2013superlinear}, the convergence rate  can be split into two parts; 1) a period of stagnation related to the removal of the eigencomponents associated with the smallest eigenvalue and 2) the convergence rate predicted  by $(\kappa_{\mathrm{eff}}(\mathcal{BA})-1)/(\kappa_{\mathrm{eff}}(\mathcal{BA})+1)$, which is parameter-robust. 
\end{remark}

\subsection{Robustness of discrete preconditioner}\label{sec3-2:preconditioner}
To test numerically the results of \Cref{thm-cond-num} and \Cref{thm-eff-cond-num}, we consider triangulations $\mathcal{T}_h$ of $\Omega$ (parameterized by mesh size $h$), which is conforming with the interface $\Gamma$ in the sense that for each element $K\in\mathcal{T}_h$, the intersection $\overline{K}\cap\Gamma$ is either a vertex or a facet of $K$. Let $\mathcal{T}_{S,h}=\mathcal{T}_h\cap\Omega_S$, the Stokes variables are discretized by the lowest order Taylor-Hood pair, i.e.,
\begin{equation}\label{eq:stokes_h}
\begin{aligned}
  \mathbf{V}_{S, h} &= \left\{\mathbf{v}\in\mathbf{H}^1(\Omega_S)\,|\, \mathbf{v}|_{K}\in[\mathbb{P}_2(K)]^d\ \ \forall\,K\in\mathcal{T}_{S, h}\right\},\\
  Q_{S, h} &= \left\{q \in {H}^1(\Omega_S)\,|\, {q}|_{K}\in \mathbb{P}_1(K) \ \ \forall\, K\in\mathcal{T}_{S, h}\right\},
\end{aligned}
\end{equation}
where $\mathbb{P}_k(K)$ denotes the space of polynomials of degree up to $k$ on element $K$.
The variables in the Darcy domain are discretized using the lowest-order Raviart-Thomas element (denoted by $\mathbb{RT}_0$) and the piecewise constant element, i.e.,
\begin{equation}\label{eq:darcy_h}
\begin{aligned}
  \mathbf{V}_{D, h} &= \left\{\mathbf{v}\in\mathbf{H}(\text{div}, \Omega_D)\,|\, \mathbf{v}|_{K}\in \mathbb{RT}_0(K)\ \ \forall\,K\in\mathcal{T}_{D, h}\right\},\\
  Q_{D, h} &= \left\{q \in {L}^2(\Omega_D)\,|\, {q}|_{K}\in \mathbb{P}_0(K) \ \ \forall\, K\in\mathcal{T}_{D, h}\right\}.
\end{aligned}
\end{equation}
 Letting $\mathcal{F}_h$
be the collection of all facets of elements in $\mathcal{T}_h$ and $\Gamma_h=\mathcal{F}_h\cap\Gamma$,
the discrete Lagrange multiplier space $\Lambda_h$ is constructed as
\begin{equation*}
\begin{aligned}
  \Lambda_h &= \left\{ \lambda \in L^2(\Gamma)\,|\, \lambda|_{F}\in \mathbb{P}_0(F)\ \ \forall\,F\in\Gamma_{h}\right\}.
\end{aligned}
\end{equation*}
Finally, we set
$\mathbf{W}_h=\mathbf{V}_{S, h}\times\mathbf{V}_{D, h}\times Q_{S, h}\times Q_{D, h}\times \Lambda_h$.

We implemented the above discretization using the FEniCS finite element library \cite{logg2012automated} and its extension FEniCS\textsubscript{ii} \cite{kuchta2020assembly} to handle the coupling at the interface. Since the spaces $\mathbf{V}_{S, h}$ and $\mathbf{V}_{D, h}$ are $H^1$- and $H(\text{div})$-conforming, respectively,  the discrete operators $\mathcal{A}_h$ and $\mathcal{B}_h$ are obtained by applying $\mathcal{A}$ and $\mathcal{B}$ to $\mathbf{W}_h\subset \mathbf{W}$, respectively. We note that the discrete representation of  the multiplier preconditioner $S$ in \eqref{eq:interface_preconditioner} is computed using a spectral/Fourier representation, see, for example, \cite{boon2022robust}.

Before conducting further numerical experiments, we verify correctness of our implementation by examining the approximation properties of the discretization.
\begin{example}[Error convergence analysis]\label{ex:mms}
We set the Stokes domain to be $\Omega_S = (0, 1) \times (0, 1)$ and the Darcy domain $\Omega_D = (0, 1)\times
(1, 2)$, such that the interface $\Gamma = (0, 1)\times\{1\}$. As the solution of \eqref{Stokes-model}, \eqref{Darcy-model}, \eqref{interface} we
consider 
\begin{align}
& \mathbf{u}_S = \operatorname{curl}\psi, \ \psi=\cos(\pi(x+y)),
\qquad p_S = \sin(2\pi(x-y)), \quad \text{ in } \Omega_S; \label{eq:fenics_mms} \\
& p_D = \sin(2\pi(x-2y)), \quad\text{ in }\Omega_D. \label{eq:fenics_mmd}
\end{align}
The Darcy flux $\mathbf{u}_D$ is computed from its definition based on the pressure $p_D$. We note that the solution \eqref{eq:fenics_mms}-\eqref{eq:fenics_mmd} requires additional source terms on the right-hand side of the coupling conditions. This data, alone with the volume source terms $\bm{f}_S$ and $g_D$ is set according to the material parameters given as $\mu=3$, ${K}=1$, $\alpha_{\text{BJS}}=0.5$. Finally, we prescribe mixed boundary conditions on the boundaries
\begin{align*}
&\Gamma^N_D=\left\{(x, y)\in\partial\Omega_D, y = 2\right\}, \quad\Gamma^E_D=\left\{(x, y)\in\partial\Omega_D, x(1-x) = 0\right\},\\
&\Gamma^E_S=\left\{(x, y)\in\partial\Omega_S, y = 0\right\}, \quad \Gamma^N_S=\left\{(x, y)\in\partial\Omega_S, x(1-x) = 0\right\}.
\end{align*}

Convergence results of $\mathbb{P}_2$-$\mathbb{P}_1$ and $\mathbb{RT}_0$-$\mathbb{P}_0$ based approximations of the Stokes and Darcy variables, obtained on a sequence of uniformly refined meshes,  are presented in \Cref{tab:mms_cvrg}. We observe quadratic
convergence for the variables in the fluid domain and linear convergence for the
variables in the porous domain. These convergence rates are theoretically optimal.

\begin{table}
\setlength{\tabcolsep}{4pt}  
  \centering
  \footnotesize{
    \begin{tabular}{c|cc|cc}
      \hline
      $h$ &
      $\lVert \nabla (\mathbf{u}_{S, h}-\mathbf{u}_S)\rVert_{0, \Omega_S}$ &
      $\lVert p_{S, h}-p_S\rVert_{0, \Omega_S}$ & 
      $\lVert \nabla\cdot (\mathbf{u}_{D, h}-\mathbf{u}_D)\rVert_{0, \Omega_D}$ &
      $\lVert p_{D, h}-p_D\rVert_{0, \Omega_D}$ \\
      \hline
3.54E-01 & 1.1888E+00(--)   & 2.2983E-01(--)   & 2.8014E+02(--)   & 1.1339E+00(--)\\  
1.77E-01 & 2.9179E-01(2.03) & 4.1099E-02(2.48) & 1.4885E+02(0.91) & 3.6369E-01(1.64)\\ 
8.84E-02 & 7.3164E-02(2.00) & 8.7034E-03(2.24) & 7.5271E+01(0.98) & 1.7232E-01(1.08)\\
4.42E-02 & 1.8330E-02(2.00) & 2.0667E-03(2.07) & 3.7730E+01(1.00) & 8.5300E-02(1.01)\\ 
2.21E-02 & 4.5881E-03(2.00) & 5.0960E-04(2.02) & 1.8877E+01(1.00) & 4.2546E-02(1.00)\\ 
1.10E-02 & 1.1478E-03(2.00) & 1.2695E-04(2.01) & 9.4398E+00(1.00) & 2.1260E-02(1.00)\\
\hline
\end{tabular}}
\caption{Error convergence for the discretization \eqref{eq:stokes_h}, \eqref{eq:darcy_h}
  of the Stokes-Darcy problem setup in \Cref{ex:mms}. Estimated convergence rate
  is shown in the brackets.
}
\label{tab:mms_cvrg}
\vskip -20pt
\end{table}
\end{example}

To assess robustness of the preconditioner,  we typically consider the spectral condition number of the discrete preconditioned problem. The associated generalized eigenvalue problem $\mathcal{A}_hx=\lambda \mathcal{B}^{-1}_h x$ is solved iteratively using SLEPc \cite{Hernandez:2005:SSF} for the largest eigenvalue in magnitude and the smallest (for $\kappa$) or the second smallest (for $\kappa_{\text{eff}}$) eigenvalues in magnitude. Additionally, we will discuss  performance of the preconditioners in terms of boundedness of the MINRES iterations. In this context, we apply the exact preconditioner with the diagonal blocks inverted using LU decomposition. However, we remark that all the blocks can be approximated by scalable optimal methods. Specifically, the fractional interface preconditioners can be efficiently implemented, for example, through rational approximations \cite{budivsa2022rational, adler2024improving}.

\begin{example}[Configurations without a near-kernel]\label{ex:noker}
 Using the domain setup from \Cref{ex:mms} and the discretization given by \eqref{eq:stokes_h} and \eqref{eq:darcy_h} we consider the boundary condition configurations (a)-(d) from \Cref{fig:bc_config}. Following \Cref{thm-cond-num}, we expect that the preconditioner \eqref{preconditioner}, with the interface operator defined by \eqref{eq:interface_preconditioner}, will results in condition numbers that are bounded with respect to the mesh size and the parameters when boundary conditions EE and NN are applied. This behavior is indeed observed in \Cref{fig:cond_noker}. Furthermore, the results are bounded also for the EN$^*$ and NE$^*$ cases, where mixed boundary conditions set on $\partial\Omega_S\setminus\Gamma$ and $\partial\Omega_D\setminus\Gamma$ prevent the near-kernel behavior seen in the EN and NE cases.
  
\begin{figure}[h!]
  \setlength{\abovecaptionskip}{0.0625\baselineskip}    
  \setlength{\belowcaptionskip}{0.0625\baselineskip}
  \centering
  \begin{subfigure}[c]{\textwidth}
    \includegraphics[width=\textwidth]{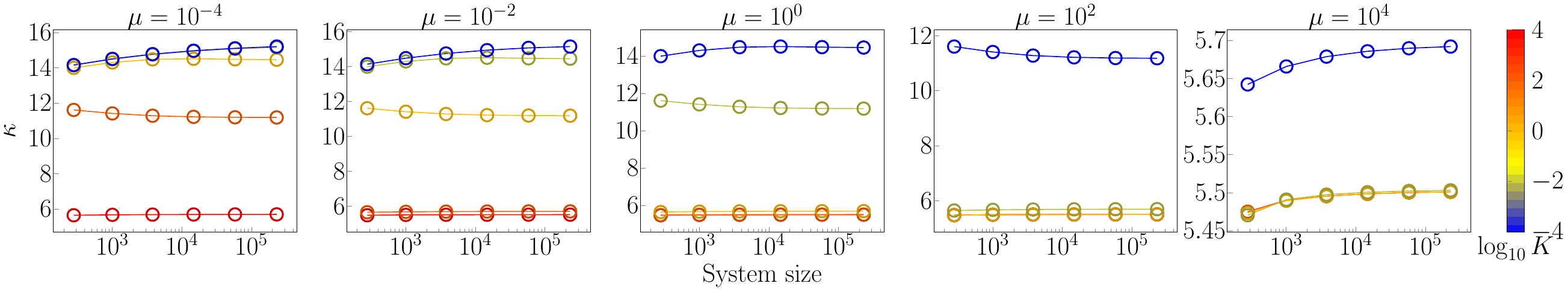}
   \caption{NN}\label{fig:cond_NN}
  \end{subfigure}
    \begin{subfigure}[c]{\textwidth}
      \includegraphics[width=\textwidth]{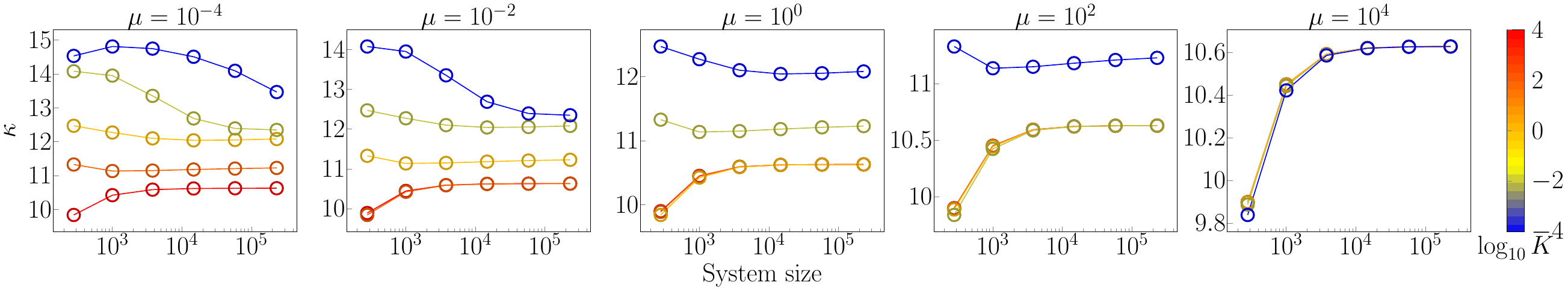}
    \caption{EE}\label{fig:cond_EE}
    \end{subfigure}
    \begin{subfigure}[c]{\textwidth}
      \includegraphics[width=\textwidth]{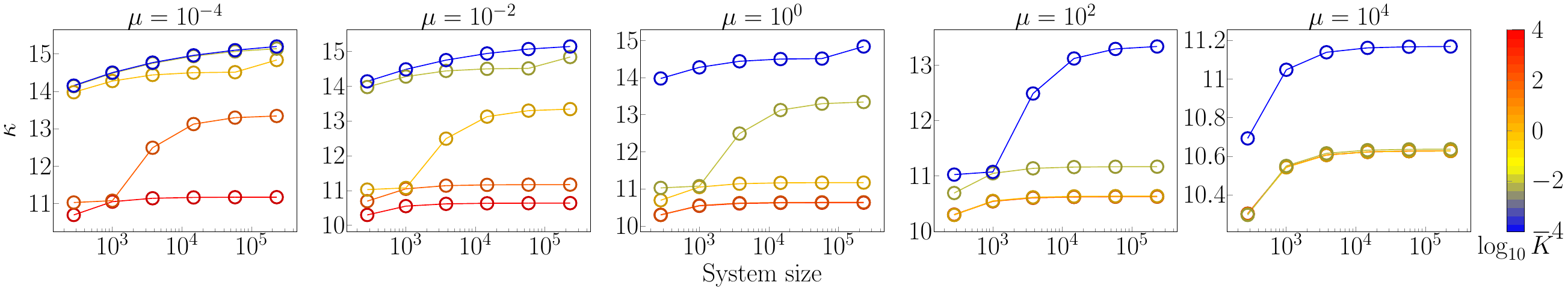}
    \caption{NE$^*$}\label{fig:cond_NEnoker}
    \end{subfigure}
    \begin{subfigure}[c]{\textwidth}
      \includegraphics[width=\textwidth]{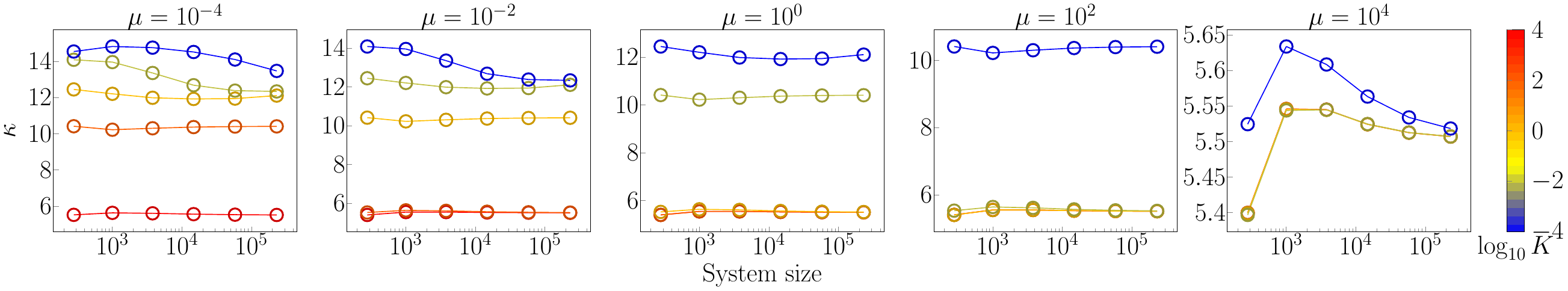}
    \caption{EN$^*$}\label{fig:cond_ENnoker}
    \end{subfigure}
    \caption{Performance of preconditioner \eqref{preconditioner} for the
      Stokes-Darcy problem, applied to the geometry from \Cref{ex:mms}, under different boundary condition configurations (see also \Cref{fig:bc_config}) and parameter variations. We set $\alpha_{\text{BJS}}=0.5$. Defining the multiplier preconditioner $S$ as in \eqref{eq:interface_preconditioner} yields to robustness in $\mu$ and $K$ as the considered boundary condition configurations do not lead to near-kernel.
    }
  \label{fig:cond_noker}
\end{figure}
\end{example}

We next investigate the EN and NE cases which have the near-kernel.

\begin{example}[Near-kernel configurations]\label{ex:ker}
We consider the domain and discretization setup from \Cref{ex:noker} with the EN and NE boundary conditions configurations. In agreement with \Cref{thm-cond-num}, we observe in \Cref{fig:cond_ker} that the condition numbers with preconditioner \eqref{preconditioner} deteriorate based on $\mu K$. Following \Cref{thm-eff-cond-num}, we evaluate the preconditioner in terms of the effective condition number. In \Cref{fig:cond2_ker}, it can be seen that removing the subspace spanned by the eigenmode associated with the $\mu K$-dependent smallest eigenvalue in magnitude leads to a condition number that remains bounded with respect to variations in $\mu$ and $K$.

\begin{figure}[h!]
  \setlength{\abovecaptionskip}{0.0625\baselineskip}    
  \setlength{\belowcaptionskip}{0.0625\baselineskip}
  \centering
    \begin{subfigure}[c]{\textwidth}
      \includegraphics[width=\textwidth]{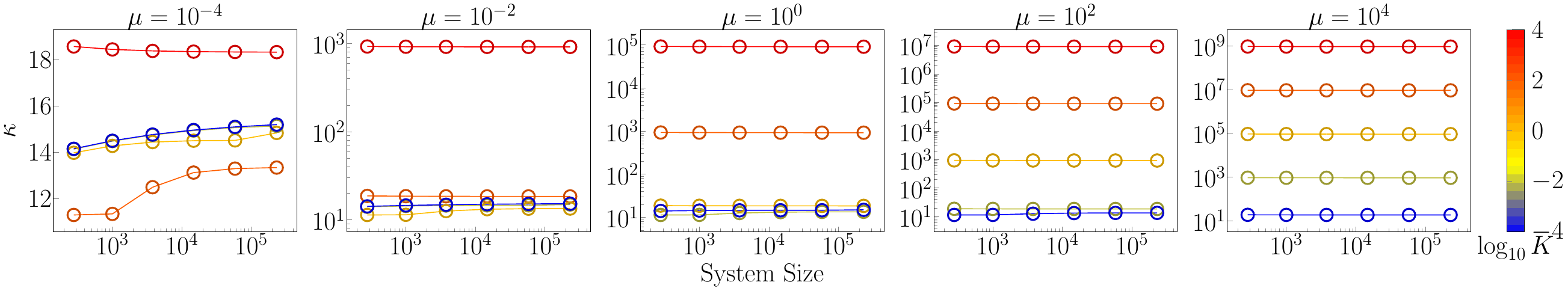}
    \caption{NE}\label{fig:NE_cond_ker}
    \end{subfigure}
    \begin{subfigure}[c]{\textwidth}
      \includegraphics[width=\textwidth]{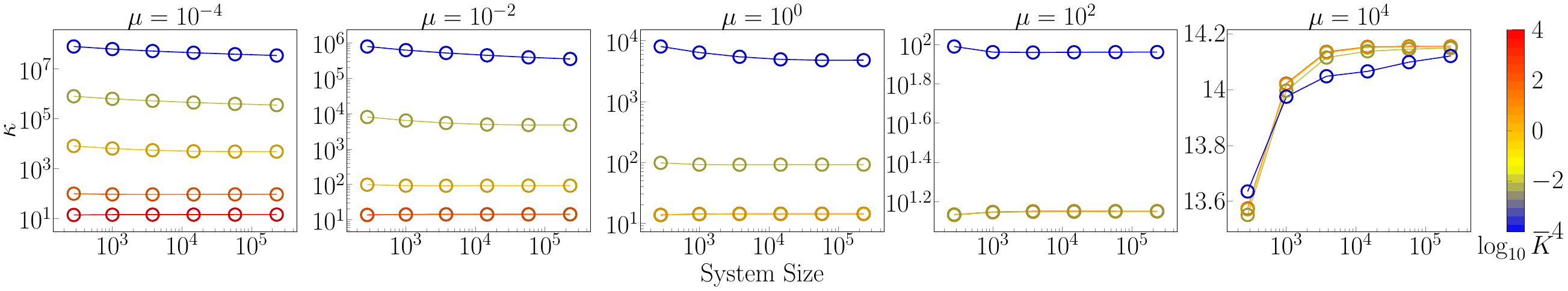}
    \caption{EN}\label{fig:EN_cond_ker}
    \end{subfigure}
    \caption{Performance of preconditioner \eqref{preconditioner} for the
      Stokes-Darcy problem, applied to the geometry from \Cref{ex:mms}, with different parameter
      values for the EN and NE cases (see also \Cref{fig:bc_config}).
      Condition numbers are sensitive to the product $\mu K$ due to the present near-kernel.
    }
  \label{fig:cond_ker}
\end{figure}

\begin{figure}[h!]
  \setlength{\abovecaptionskip}{0.0625\baselineskip}    
  \setlength{\belowcaptionskip}{0.0625\baselineskip}
  \centering
    \begin{subfigure}[c]{\textwidth}
      \includegraphics[width=\textwidth]{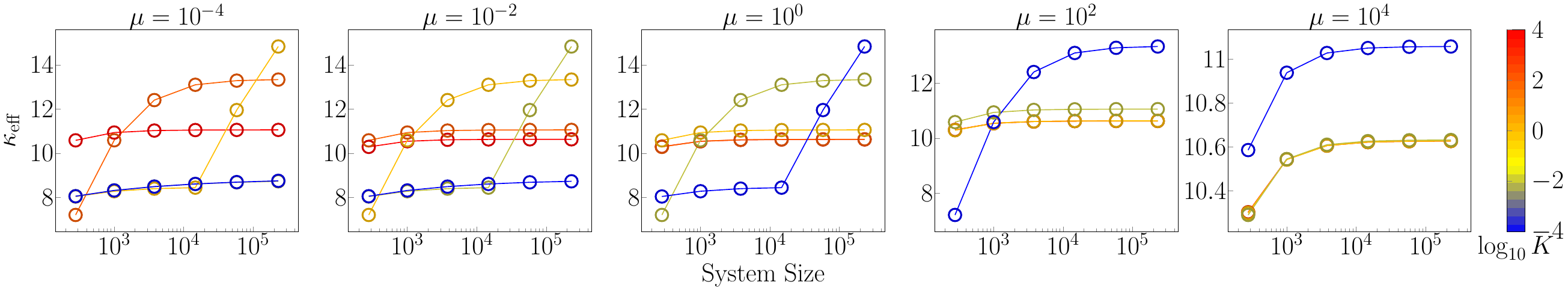}
    \caption{NE}\label{fig:NE_cond2_ker}
    \end{subfigure}
    \begin{subfigure}[c]{\textwidth}
      \includegraphics[width=\textwidth]{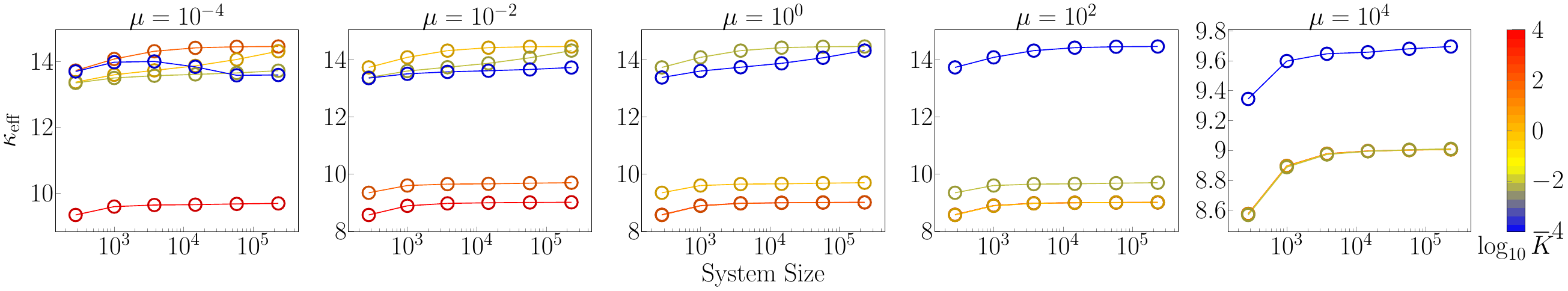}
    \caption{EN}\label{fig:EN_cond2_ker}
    \end{subfigure}
    \caption{
      Performance of the Stokes-Darcy preconditioner in \Cref{fig:cond_ker} in terms
      of the effective condition number based on the second smallest-in-magnitude eigenvalue.
      Comparisong with the rightmost panels for the NE case and leftmost panels for the EN case in
      \Cref{fig:cond_ker} reveals the parameter-sensitivity of the smallest eigenvalue.
    }
  \label{fig:cond2_ker}
\end{figure}
\end{example}

In the final example, we illustrate the impact of the single degenerate eigenvalue of the \eqref{preconditioner}-preconditioned system on the MINRES convergence.  

\begin{example}[MINRES convergence]\label{ex:nodef_cvrg}
 We consider the manufactured test problem from \Cref{ex:mms} with EN configuration of boundary conditions. Setting $\alpha_{\text{BJS}}=0.5$ and $K=10^{-4}$, we compare the convergence of the \eqref{preconditioner}-preconditioned MINRES solver when $\mu=10^{4}$ or $\mu=10^{-4}$. In both cases, the solver starts from zero initial guess and converges once the preconditioned residual norm is reduced by factor $10^{12}$. From \Cref{fig:cond_ker} and \Cref{fig:cond2_ker}, we observe that two cases have similar condition numbers, namely, $\kappa\approx 14.1$ for $\mu=10^4$ and $\kappa_{\text{eff}}\approx 13.6$ for $\mu=10^{-4}$. However, \Cref{fig:cvrg_EN} shows very different characteristics of MINRES convergence between the two cases. In particular, $\mu=10^{-4}$ exhibits prolonged stagnation/plateau regions in the convergence curves.

\begin{figure}
  \centering
  \includegraphics[width=0.8\textwidth]{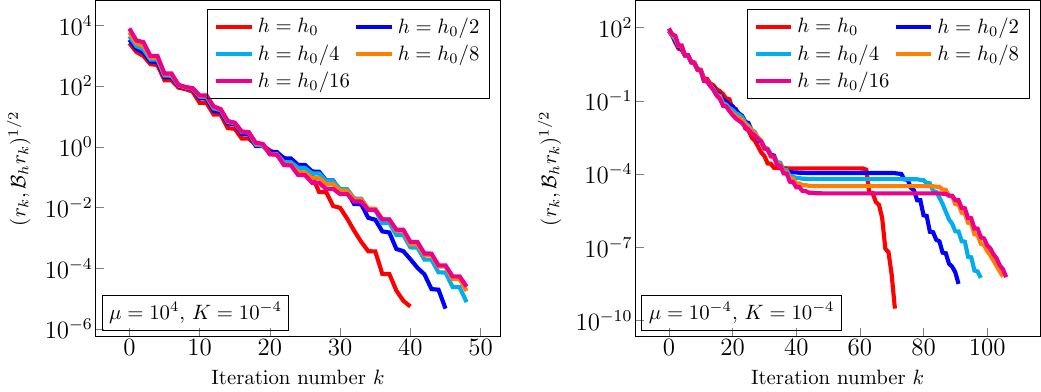}
  \caption{
    Evolution of preconditioned residual norm over MINRES iterations for the Stokes-Darcy problem from \Cref{ex:mms} for the EN case. Following \Cref{thm-cond-num} and \Cref{thm-eff-cond-num}, parameter values $K=10^{-4}$ and $\mu=10^{-4}$ yield a large condition number, while the effective condition number is bounded. However, convergence of the MINRES solver is considerably delayed compared to the setup with $K=10^{-4}$ and$\mu=10^{4}$.
  }
  \label{fig:cvrg_EN}
\end{figure}
\end{example}

Motivated by \Cref{ex:nodef_cvrg}, we will next discuss the convergence process of MINRES method for the two near-kernel cases.

\section{MINRES convergence and deflation preconditioner}\label{sec4:discrete analysis}
In this section, we study the behavior of the MINRES method when there is a slow-to-converge eigenvalue of the linear system.
\subsection{Convergence analysis}
Consider a large and sparse linear system of the form 
\begin{align}\label{iter-system}
    \mathcal{A}x = b,
\end{align}
where the $n\times n$ matrix $\mathcal{A}$ is symmetric and indefinite. Given the initial guess $x_0$ to the solution of \eqref{iter-system}, and the corresponding initial residual $r_0 = b - \mathcal{A}x_0$, the Krylov subspace of dimension $m$ defined by $\mathcal{A}$ and $r_0$ is given by 
 $  \mathcal{K}_m(\mathcal{A},r_0) = \operatorname{span}\{r_0, \mathcal{A}r_0,..., \mathcal{A}^{m-1}r_0\}$.
At the $m$-th step, an approximation to the solution of \eqref{iter-system}, $x_m$, can be obtained in $x_0 + \mathcal{K}_m(\mathcal{A},r_0)$, by imposing some additional conditions. The residual $r_m$ is related to the initial residual as $r_m = p_m(\mathcal{A}) r_0,$ where the residual polynomial $p_m\in \mathbb{P}_m$, the set of polynomials of degree at most $m$ such that $p_m(0) = 1$. Let $\Lambda(\mathcal{A}) = \{\lambda_1, \lambda_2,...,\lambda_n\}$ be the set of eigenvalues and $\{v_1, v_2,...,v_n\}$ be the corresponding orthonormal eigenvectors of $\mathcal{A}$, with the eigenvalues ordered increasingly in magnitude. Then we have the following standard bound \cite{simoncini2013superlinear} 
\begin{align*}
    \| r_m \| = \|p_m(\mathcal{A}) r_0\| \leq \min_{p\in \mathbb{P}_m} \max_{1\leq i\leq n} |p(\lambda_i)| \|r_0\|.
\end{align*}
A more specific bound was developed in \cite[Section 3.1]{Greenbaum1997Iterative} for the case where $\Lambda(\mathcal{A})\subset[a,b]\cup[c,d]$, $a<b<0<c<d$. Under the constraint that $|b-a| = |d-c|$, using an appropriate transformation of the intervals and bounds on Chebychev polynomials, the following estimate holds
\begin{align}\label{bound-rm}
    \frac{\|r_m\|}{\|r_0\|} \leq 2\left(\frac{\sqrt{|ad|}- \sqrt{|bc|}}{\sqrt{|ad|} + \sqrt{|bc|}}\right)^{[m/2]},
\end{align}
where $[m/2]$ is the integer part of $m/2$. 

Following \cite{simoncini2013superlinear}, let us now assume that $\lambda_1$ is close to zero, and the remaining eigenvalues belong to $[a,b]\cup[c,d]$. Let $r_0 = \sum\limits_{k = 1}^n \alpha_k v_k$, where $\alpha_k = (r_0, v_k), k =1,...,n$. Then, we have
\begin{align*}
    r_m & = p_m(\mathcal{A})r_0 = \sum_{k=1}^n p_m(\lambda_k)\alpha_k v_k = p_m(\lambda_1)\alpha_1 v_1 + \sum_{k=2}^n p_m(\lambda_k)\alpha_k v_k \overset{\Delta}{=} p_m(\lambda_1)\alpha_1 v_1 + \Bar{r}_0. 
\end{align*}
Let $\Bar{q}_j$ be the MINRES polynomial in $\mathcal{K}_{j}(\mathcal{A},\Bar{r}_0)$, then
$ \Bar{r}_j = \Bar{q}_j(\mathcal{A}) \Bar{r}_0 = \sum_{k=2}^n \Bar{q}_j(\lambda_k)p_m(\lambda_k)\alpha_k v_k$.
Let $\theta_1^{(m)}$ be the harmonic Ritz value (see  \cite{simoncini2013superlinear,morgan1991computing,paige1995approximate}) closet to $\lambda_1$ in $\mathcal{K}_m(\mathcal{A},r_0)$, and define the polynomial
\begin{align*}
    \phi_m(\lambda) := \frac{\theta_1^{(m)}}{\lambda_1} \frac{\lambda_1 - \lambda}{\theta_1^{(m)}-\lambda} p_m(\lambda).
\end{align*}
We note that $\phi_m(\lambda_1) = 0$. Since the MINRES polynomial is a minimizing polynomial, we obtain
\begin{align*}
    \|r_{m+j}\|^2 
    & \leq \|\phi_m(\mathcal{A}) \Bar{q}_j(\mathcal{A}) r_0\|^2  = \sum_{k=2}^n \phi_m^2(\lambda_k) \Bar{q}_j^2(\lambda_k) \alpha_k^2 \nonumber \leq F_m^2\sum_{k=2}^n p_m^2(\lambda_k) \Bar{q}_j^2(\lambda_k) \alpha_k^2 = F_m^2\|\Bar{r}_j\|^2, \label{r_{m+j}}
\end{align*}
where 
$
F_m = \max\limits_{k\geq 2}\frac{|\theta_1^{(m)}|}{|\lambda_1|}\frac{|\lambda_1 - \lambda_k|}{|\theta_1^{(m)}-\lambda_k|}.
$
Following~\eqref{bound-rm}, we know that 
$
    \|\bar{r}_j\| \leq 2\left(\frac{\sqrt{|ad|}- \sqrt{|bc|}}{\sqrt{|ad|} + \sqrt{|bc|}}\right)^{[j/2]} \|\bar{r}_0\|.
    $
    Then, combining with \eqref{r_{m+j}}, we obtain the bound 
\begin{align}\label{eq:residue}
    \| r_{m+j}\| \leq 2F_m\left(\frac{\sqrt{|ad|}- \sqrt{|bc|}}{\sqrt{|ad|} + \sqrt{|bc|}}\right)^{[j/2]} \|\bar{r}_0\|.
\end{align}
Therefore, the speed of convergence depends on the term $F_m$, which relates the harmonic Ritz value and the degenerate eigenvalue, as well as the distribution of the remaining bounded part of the spectrum. We demonstrate this theoretical observation for our problem in the next example.

\begin{example}[MINRES convergence revisited]\label{ex5.1}
  Using domain setup from \Cref{ex:mms}, we investigate convergence of the MINRES method for the NE and EN cases when varying the parameters $\mu{K}$. We fix $\alpha_{\text{BJS}} = 0.5$ and the mesh size $h = h_0/4$. Recall that in the NE case, the inf-sup constant $\gamma \sim \min\{\frac{1}{\sqrt{\mu {K}}},1\}$. When $\mu{K}$ is greater than $1$, a small eigenvalue appears in the spectrum, and we expect slow convergence. On the other hand, $\mu K < 1$ is the troublesome regime for convergence in the EN case as $\gamma \sim \min\{\sqrt{\mu {K}},1\}$.

 \Cref{fig:HRV-ND} and \Cref{fig:HRV-DN} show the convergence history of the preconditioned MINRES residuals, the factor $F_k$, and the values $\min_{i} |\theta_i(k) - \lambda_{\min}|$ for the NE and EN cases. Here, $\theta_i(k),  i = 1,\cdots, k$ represent the harmonic Ritz values at the $k$-th iteration.  In both cases, the residual curves show long plateau regions. However, consistent with the estimate \eqref{eq:residue}, once the $F_k$ factor converges to $1$ the plateau disappears and the convergence becomes linear.
\end{example}

\begin{figure}[htbp]
  \setlength{\abovecaptionskip}{0.0625\baselineskip}    
  \setlength{\belowcaptionskip}{0.0625\baselineskip}    
\centering
\begin{subfigure}{0.45\textwidth}
\includegraphics[width=0.8\textwidth]{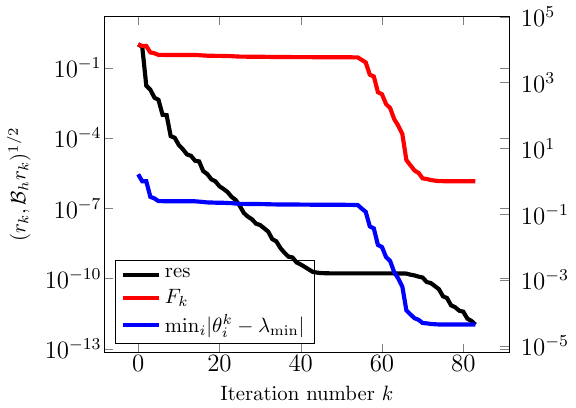}
\caption{$\mu K = 10^4$}
\label{fig:HRV-ND-a}
\end{subfigure}
\begin{subfigure}{0.45\textwidth}
\includegraphics[width=0.8\textwidth]{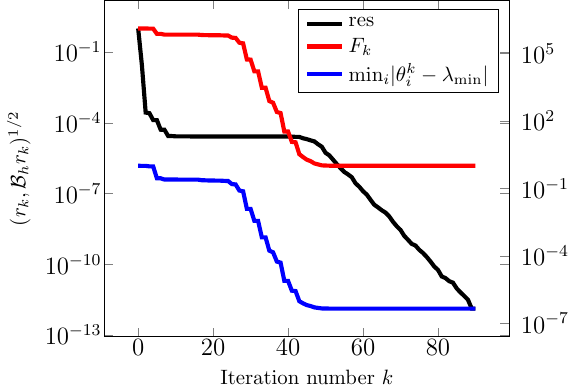} 
\caption{$\mu K = 10^{6}$}
\label{fig:HRV-ND-b}
\end{subfigure}
\caption{Convergence history of the MINRES method (plotted against the left vertical-axis), the harmonic Ritz value, and
  the term $F_k$ (both plotted against the right vertical axis) for the NE case when $\alpha_{\text{BJS}} = 0.5$ and $h = h_0/4$.}
\label{fig:HRV-ND}
\end{figure}

\begin{figure}[htbp]
  \setlength{\abovecaptionskip}{0.0625\baselineskip}    
  \setlength{\belowcaptionskip}{0.0625\baselineskip}  
\centering
\begin{subfigure}{0.45\textwidth}
\includegraphics[width=0.8\textwidth]{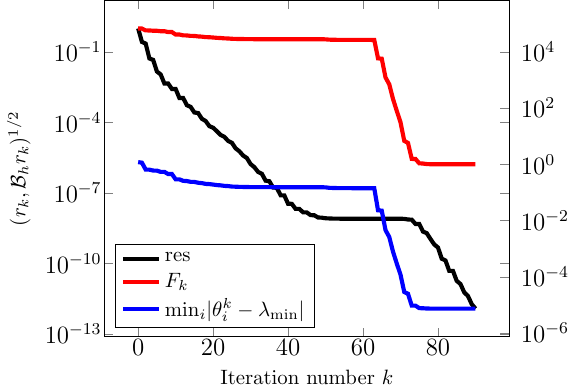}
\caption{$\mu K = 10^{-6}$}
\label{fig:HRV-DN-a}
\end{subfigure}
\begin{subfigure}{0.45\textwidth}
\includegraphics[width=0.8\textwidth]{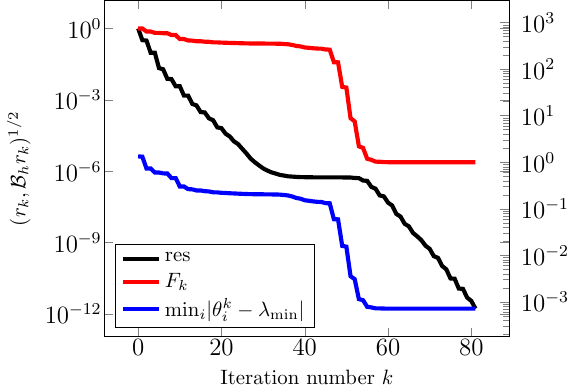} 
\caption{$\mu K = 10^{-4}$}
\label{fig:HRV-DN-b}
\end{subfigure}
\caption{Convergence history of the MINRES method (plotted against the left vertical axis), the harmonic Ritz value,
  and the term $F_k$ (plotted against the right vertical axis) for the EN case with $\alpha_{\text{BJS}} = 0.5$ and $h = h_0/4$.}
\label{fig:HRV-DN}
\end{figure}

In the next section, we employ the deflation technique (e.g. \cite{coulaud2013deflation}) to remove the plateau region and accelerate the MINRES convergence.

\subsection{Deflation technique}\label{sec:deflation}
Let $x_k$ be the $k$-th iterate of the MINRES method for \eqref{iter-system}, with $r_k$ as the corresponding residual. Given a projection operator $P_{\mathcal{A}}$ onto the slow-to-converge eigenmodes, a deflation preconditioner is applied as an update
\[
x_{k+1} = x_k + P_{\mathcal{A}} (P_{\mathcal{A}}^{T} \mathcal{A} P_{\mathcal{A}})^{-1} P_{\mathcal{A}}^{T} r_k.
\]
Constructing $P_{\mathcal{A}}$ in the case of the Stokes-Darcy problem requires computing the eigenmode associated with the smallest eigenvalue in magnitude of $\mathcal{A}x=\lambda \mathcal{B}^{-1} x$, where $\mathcal{B}$ is
the preconditioner \eqref{preconditioner}.  However, this approach is computationally expensive and impractical. Thus, to avoid this, we propose an alternative update
\begin{equation}\label{eq:update}
x_{k+1} = x_k + P_{W} (P_{W}^{T} \gamma \mathcal{B}^{-1} P_{W})^{-1} P_{W}^{T} r_k,
\end{equation}
where $P_W$ is a projection operator onto the subspace spanned by approximations of the slow-to-converge eigenmodes and $\gamma>0$ is a problem-dependent scaling parameter.

\begin{figure}[h!]
  \centering
  \includegraphics[width=0.7\textwidth]{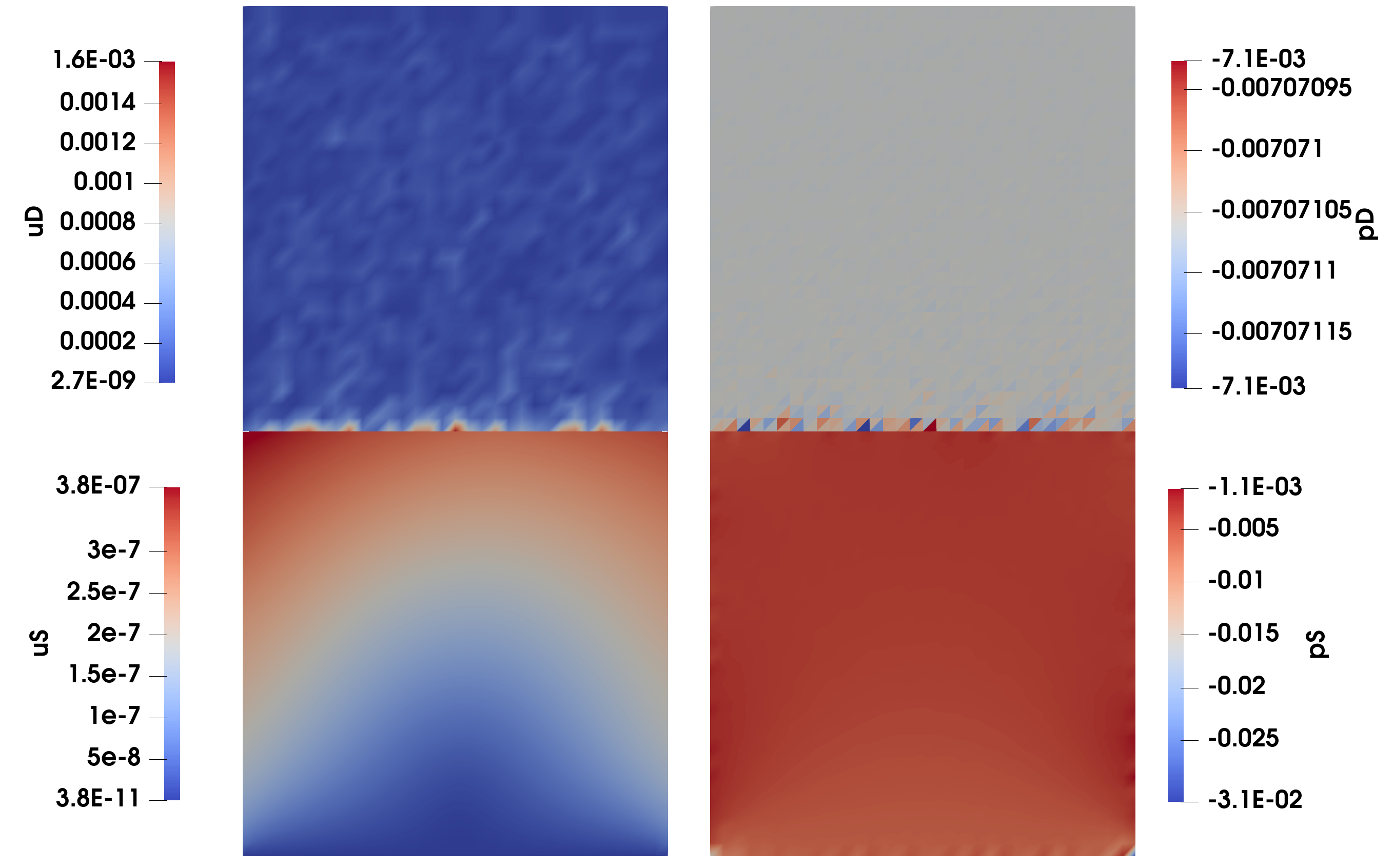}
  \caption{Discrete eigenmode corresponding to the inf-sup constant for the Stokes-Darcy problem, applied to the geometry from \Cref{ex:mms}, for the
    NE case and $\mu=K=10^4$. Components $\mathbf{u}_{S, h}$ and $\mathbf{u}_{D, h}$ are small almost everywhere. Away from the interface $p_{D, h}\approx \text{const} > 0$ 
    while $p_{S, h}\approx 0$. Without including its plot, we remark that $\lambda_h$ is approximately constant.
    Therefore, we use approximation $w_h\approx(\mathbf{0}, \mathbf{0}, 0, 1, 1)$ in the deflation.}
  \label{fig:inf_sup}
\end{figure}

To motivate the approach \eqref{eq:update}, \Cref{fig:inf_sup} illustrates the eigenmode for which the inf-sup constant is attained in the Stokes-Darcy problem from \Cref{ex:mms} for the NE case. As suggested by \cref{thm-depend-para}, the function $w=(\mathbf{u}_S, \mathbf{u}_D, p_S, p_D, \lambda)=(\mathbf{0}, \mathbf{0}, 0, 1, 1)$ provides a good approximation of the true eigenmode.   Therefore, we define $P_W$ and the scaling parameter as follows
\[
w = \begin{cases}
     (\mathbf{0}, \mathbf{0}, 0, 1, 1)    , \ \text{NE} \ \text{case}; \\
     (\mathbf{0}, \mathbf{0}, 1, 0, 1), \  \text{EN}\ \text{case}.
\end{cases}, \quad
    \gamma \sim 
    \begin{cases}
       (\mu {K})^{-1}, \ \text{NE} \ \text{case}; \\
       (\mu {K}), \  \text{EN}\ \text{case},
    \end{cases}
    \]
    and, combing with the preconditioner \eqref{preconditioner}, the deflation preconditioner is defined as
    \begin{equation}\label{eq:defl_preconditioner}
\mathcal{B}_W = \mathcal{B} + P_{W} (P_{W}^{T} \gamma \mathcal{B}^{-1} P_{W})^{-1} P_{W}^{T}.
    \end{equation}
    We remark that the scaling parameter $\gamma$ reflects the dependence of the inf-sup constant in \Cref{thm-depend-para}.  Since the near-kernel is one dimensional, the matrix representation of $P_W$ is a matrix consisting of a single column, where the column vector represents  the function $w$ in $W_h$. As as result, computing the small $(1\times 1)$ matrix $(P_{W}^{T} \gamma \mathcal{B}^{-1} P_{W})$ and its inverse can be done efficiently with optimal complexity. 

    \begin{example}[Deflation preconditioner]\label{ex:deflation}
      To demonstrate the improved performance of the deflation preconditioner \eqref{eq:defl_preconditioner}, we repeat the experiments from \Cref{ex:ker}. As shown in \Cref{fig:history_ker_defl}, the new preconditioner eliminates the plateau regions in the MINRES convergence curves, leading to significantly faster convergence in parameter regimes that previously exhibited slow convergence with \eqref{preconditioner}. In other regimes, \eqref{eq:defl_preconditioner} maintains the favorable performance of \eqref{preconditioner}. Finally, \Cref{fig:cond_ker_defl} illustrates the dependence of the condition number on parameters and mesh size when using \eqref{eq:defl_preconditioner}. Comparing this with \Cref{fig:cond_ker}, which corresponds to \eqref{preconditioner}, we observe that deflation effectively improves the system’s conditioning.
    \end{example}

\begin{figure}[]
  \setlength{\abovecaptionskip}{0.0625\baselineskip}    
  \setlength{\belowcaptionskip}{0.0625\baselineskip}
  \centering
    \begin{subfigure}[c]{\textwidth}
      \includegraphics[width=\textwidth]{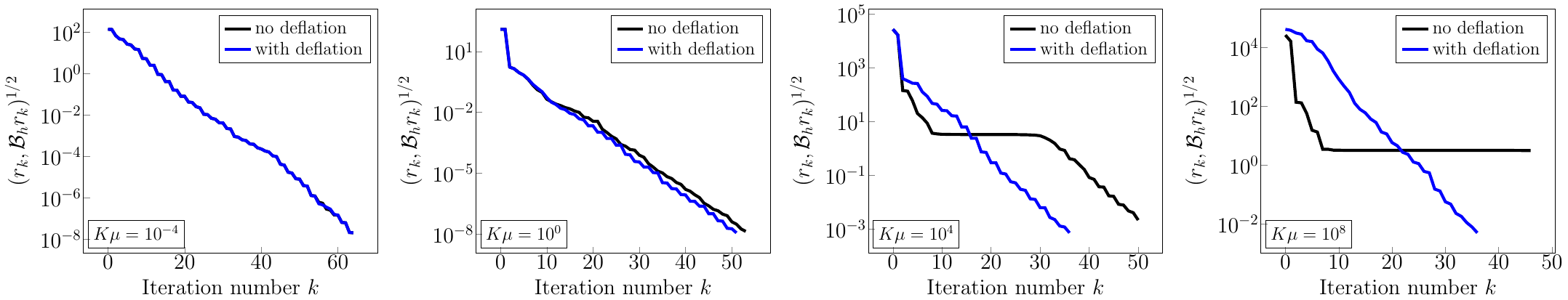}
    \caption{NE}\label{fig:NE_history_ker_defl}
    \end{subfigure}
    \begin{subfigure}[c]{\textwidth}
      \includegraphics[width=\textwidth]{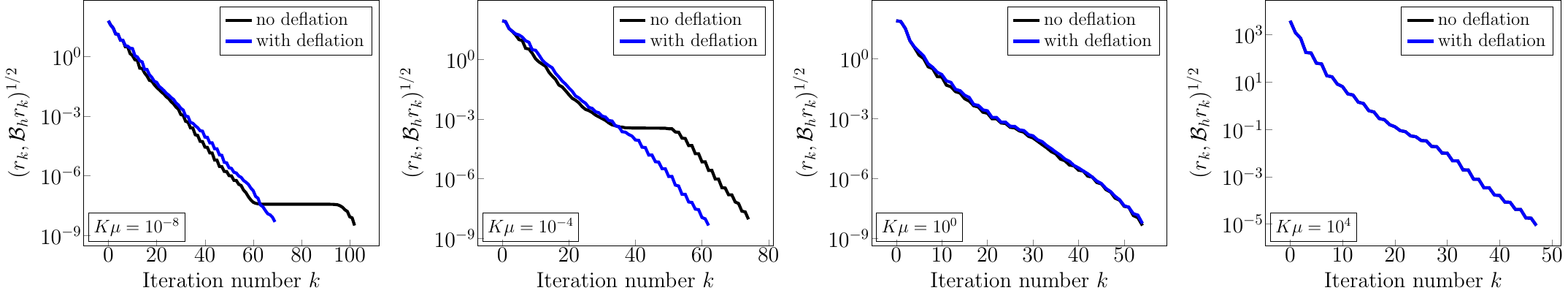}
    \caption{EN}\label{fig:EN_history_ker_defl}
    \end{subfigure}
    \caption{MINRES convergence history using deflation preconditioner \eqref{eq:defl_preconditioner} and
      the standard preconditioner \eqref{preconditioner}. 
      Setup from \Cref{ex:mms} is considered with a fixed mesh resolution $h=h_0/8$ (cf. \Cref{fig:cvrg_EN}).
  }
  \label{fig:history_ker_defl}
\end{figure}

\begin{figure}[]
  \setlength{\abovecaptionskip}{0.0625\baselineskip}    
  \setlength{\belowcaptionskip}{0.0625\baselineskip}
  \centering
    \begin{subfigure}[c]{\textwidth}
      \includegraphics[width=\textwidth]{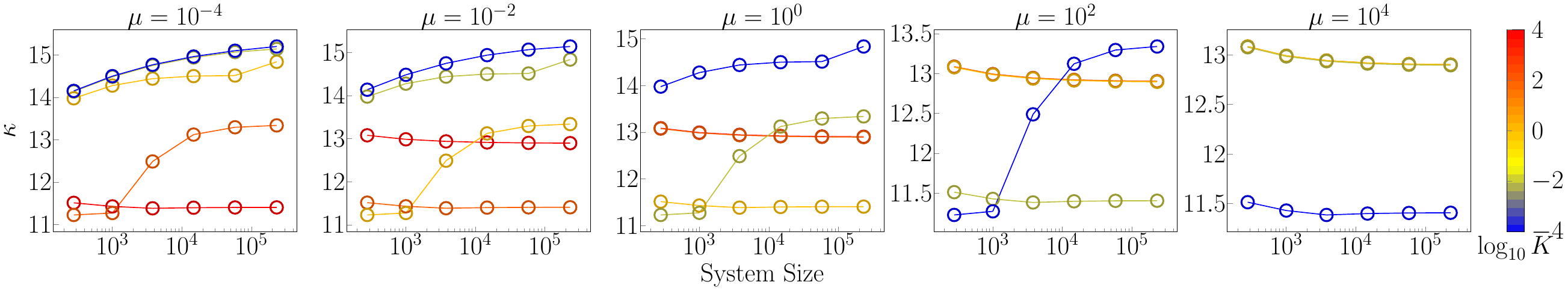}
    \caption{NE}\label{fig:NE_cond_ker_defl}
    \end{subfigure}
    \begin{subfigure}[c]{\textwidth}
      \includegraphics[width=\textwidth]{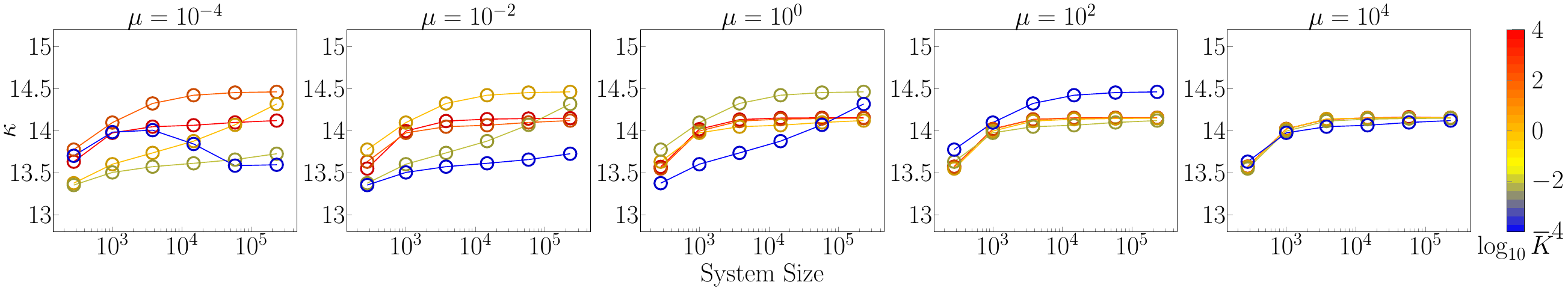}
    \caption{EN}\label{fig:EN_cond_ker_defl}
    \end{subfigure}
    \caption{Performance of preconditioner \eqref{eq:defl_preconditioner} for the
      Stokes-Darcy problem, applied on the geometry from \Cref{ex:mms}, with different parameter
      values for the EN and NE cases (see also \Cref{fig:bc_config}). Deflation
      effectively removes the parameter sensitivity observed in \Cref{fig:cond_ker}.
    }
  \label{fig:cond_ker_defl}
\end{figure}

In the final example we aim to demonstrate that the preconditioner \eqref{preconditioner} and 
the deflation technique \eqref{eq:defl_preconditioner} can be adapted in the 
settings beyond those analyzed theoretically in \Cref{sec3:well-posed}. In particular, 
we shall consider the model setup with floating Darcy domains, that is, where the boundary 
of the Darcy domains coincides with the interface, i.e. $\partial\Omega_D=\Gamma$. 

\begin{example}[Floating Darcy subdomains]\label{ex:floating}
  We study the Stokes-Darcy problem in geometries with multiple disconnected porous subdomains, This setup is inspired by \cite{bukavc2023analysis} in the context of tissue bioengineering,  particularly in the design of scaffolds modeled as porous structures. As a representative example, we consider a channel domain containing $12$ Darcy subdomains, denoted as $\Omega^i_D$ for $i=1, \dots, 12$. Each subdomain has a hexagonal shape with a radius of $1$ and is arranged such that the free-flow region forms channels of width $0.2$, as illustrated in \Cref{fig:floating_setup}. We impose a no-slip condition, $\mathbf{u}_S=0$, on the top and bottom walls of the channel, while the flow is driven by a traction boundary condition of the form $-\mathbf{\sigma}(\mathbf{u}_S, p_S)\cdot\mathbf{n}_S=p\mathbf{n}S$, where $p=1$ on the left edge and $p=0$ on the right edge. Although the coupled problem remains well-posed, the absence of a Neumann boundary on the Darcy subdomains ($\lvert\Gamma^N_D\rvert=0$) poses a preconditioning challenge similar to the NE case in \Cref{thm-depend-para}. To illustrate this issue, we fix $ \mu=3 $ and $\alpha_{\text{BJS}}=0.5$ while varying $K\in\left\{1, 100\right\}$. The resulting flow field for $K=100$ is depicted in \Cref{fig:floating_setup}.
  
\begin{figure}
  \centering
      \includegraphics[height=0.32\textwidth, align=c]{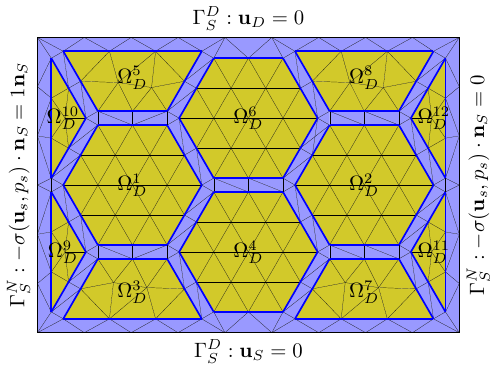}
  \includegraphics[height=0.285\textwidth, align=c]{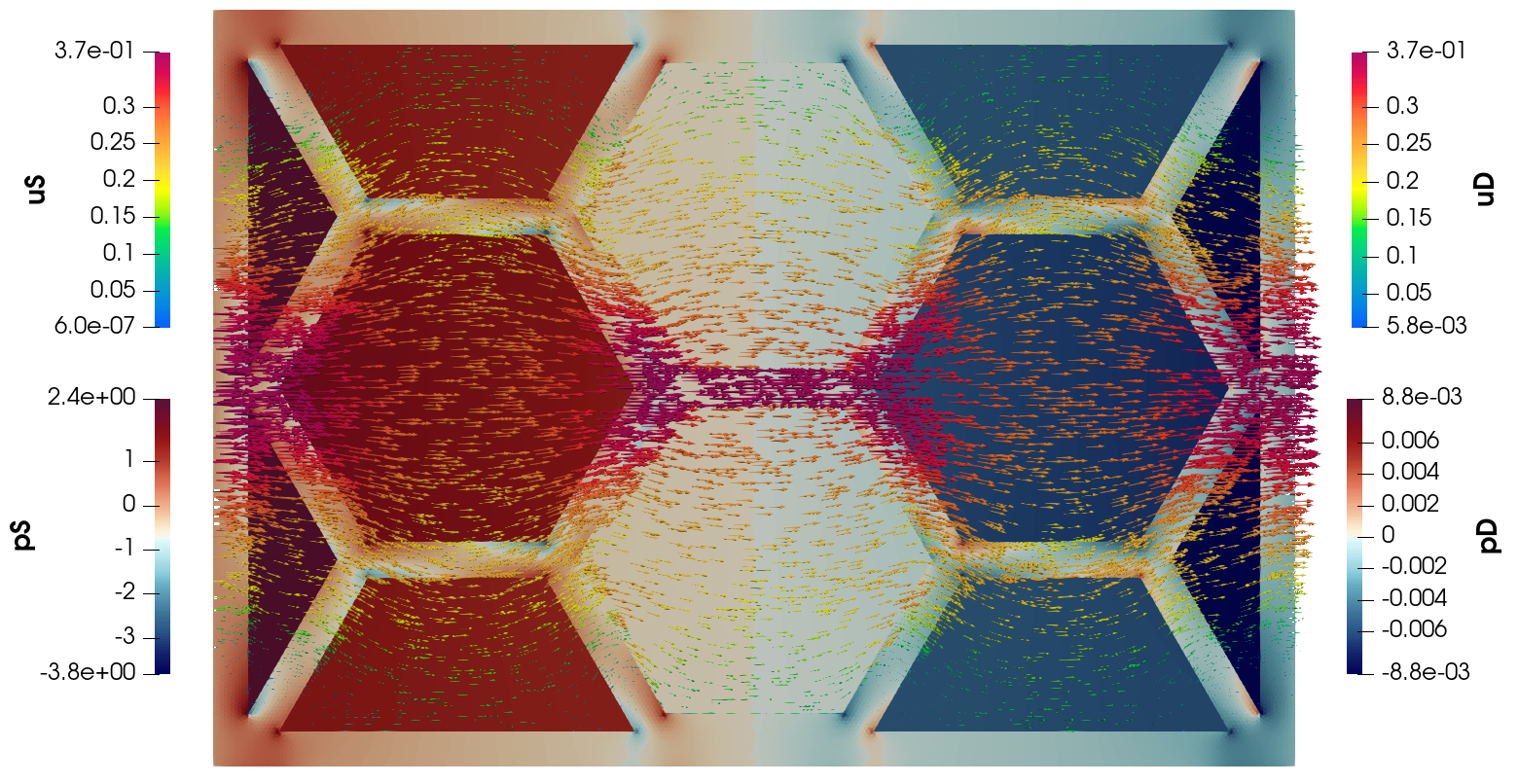}
  \caption{
    Stokes-Darcy model with floating Darcy subdomains. (Left) Problem setup together with initial mesh.
    (Right) Pressure and velocity fields for $\mu=3$, $\alpha_{\text{BJS}}=0.5$, $K=100$.
  }
  \label{fig:floating_setup}
\end{figure}

Applying the preconditioner \eqref{preconditioner}, \Cref{fig:floating_cvrg} shows the different MINRES convergence behavior for $K=1$ and $K=100$. Notably, the iterations are sensitive to $K$, with the $K=100$ case exhibiting multiple plateaus.  As in \Cref{thm-depend-para}, the parameter-robust inf-sup constant would require us consider the problem on the subspace $\left\{p\in L^2(\Omega_D): (p, \chi_{\Omega^i_D})_{\Omega_D}=0,\, i=1, \dots, 12\right\}$, where $\chi_{\Omega^i_D}$ is the indicator function of $\Omega^i_D$.  To address this, we apply the deflated preconditioner \eqref{eq:defl_preconditioner} with the deflation subspace spanned by vectors $w_i=(\mathbf{u}_S, \mathbf{u}_D, p_S, p^i_D, \lambda^i)$, where $\mathbf{u}_S=0$, $\mathbf{u}_D=0$, $p_S=0$, $p^i_D=\chi_{\Omega^i_D}$, $\lambda^i=p^i_D|_{\Gamma}$. As in the NE case, we set $\gamma \sim (\mu K)^{-1}$. \Cref{fig:floating_cvrg} confirms that deflation eliminates slow convergence regions, leading to iteration counts that remain stable. 

\begin{figure}
  \centering
  \includegraphics[width=\textwidth]{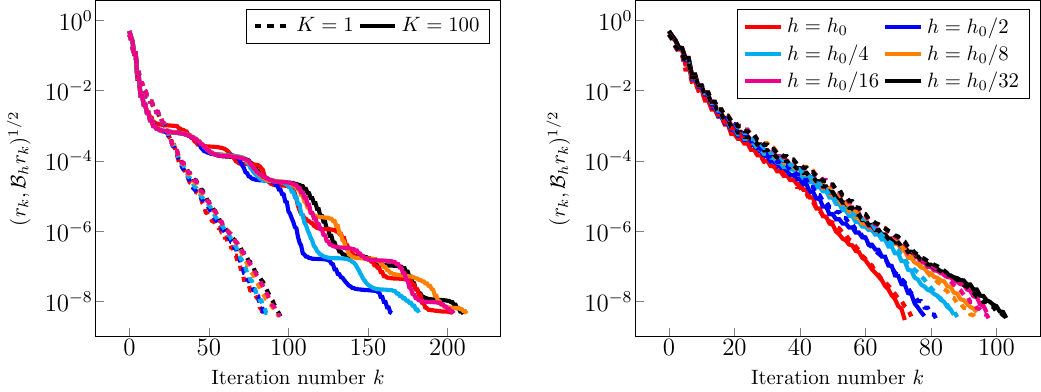}
  \caption{
    Convergence history for preconditioned MINRES solver on the Stokes-Darcy problem with $1\leq i \leq 12$ floating domains, cf. \Cref{fig:floating_setup}.
    (Left) Preconditioner \eqref{preconditioner} without deflation. (Right) Deflation preconditioner \eqref{eq:defl_preconditioner}
    is used with deflated subspace spanned by vectors $\mathbf{u}_S=0$, $\mathbf{u}_D=0$, $p_S=0$, $p^i_D=\chi_{\Omega^i_D}$, $\lambda^i=p^i_D|_{\Gamma}$.
    In both plots, colors encode refinement level with the mesh of $h=h_0$ shown
    in \Cref{fig:floating_setup} and the results for $K=1$ are shown in dashed line.
  }
  \label{fig:floating_cvrg}
\end{figure}
\end{example}

\section{Conclusions}\label{sec:conclusions}
We studied a coupled Stokes-Darcy problem formulation, using a Lagrange multiplier to enforce mass conservation at the interface, and established the well-posedness of the problem for various boundary condition configurations. We demonstrated that in certain configurations, uniform stability estimates with respect to the model parameters require a constrained solution space to avoid the near-kernel issue. Through the analysis of the MINRES method’s convergence, we showed that the presence of the near kernel can lead to stagnation. Based on these theoretical findings, we developed parameter-robust preconditioners, which, particularly in near-kernel cases, employ deflation techniques to accelerate convergence.

\bibliographystyle{siam}
\bibliography{reference}

@article{dobrowolski2003lbb,
  title={On the {LBB} constant on stretched domains},
  author={Dobrowolski, Manfred},
  journal={Mathematische Nachrichten},
  volume={254},
  number={1},
  pages={64--67},
  year={2003},
  publisher={Wiley Online Library}
}

@article{gutknecht2012spectral,
  title={Spectral deflation in {K}rylov solvers: A theory of coordinate space based methods},
  author={Gutknecht, Martin H},
  journal={Electronic Transactions on Numerical Analysis},
  volume={39},
  pages={156--185},
  year={2012}
}

@misc{soodhalter2020surveysubspacerecyclingiterative,
      title={A survey of subspace recycling iterative methods}, 
      author={Kirk M. Soodhalter and Eric de Sturler and Misha Kilmer},
      year={2020},
      eprint={2001.10347},
      archivePrefix={arXiv},
      url={https://arxiv.org/abs/2001.10347}, 
}

@article{cao2010coupled,
  title={{Coupled Stokes-Darcy model with Beavers-Joseph interface boundary condition}},
  author={Cao, Yanzhao and Gunzburger, Max and Hua, Fei and Wang, Xiaoming},
  journal={Communications in Mathematical Sciences},
  volume={8},
  number={1},
  pages={1--25},
  year={2010},
  publisher={International Press of Boston}
}

@article{ervin2009coupled,
  title={{Coupled generalized nonlinear Stokes flow with flow through a porous medium}},
  author={Ervin, VJ and Jenkins, Eleanor W and Sun, Shuyu},
  journal={SIAM Journal on Numerical Analysis},
  volume={47},
  number={2},
  pages={929--952},
  year={2009},
  publisher={SIAM}
}

@article{luo2017uzawa,
  title={Uzawa smoother in multigrid for the coupled porous medium and {S}tokes flow system},
  author={Luo, Peiyao and Rodrigo, Carmen and Gaspar, Francisco J and Oosterlee, Cornelis W},
  journal={SIAM journal on Scientific Computing},
  volume={39},
  number={5},
  pages={S633--S661},
  year={2017},
  publisher={SIAM}
}

@article{boon2020parameter,
  title={A parameter-robust iterative method for {S}tokes--{D}arcy problems retaining local mass conservation},
  author={Boon, Wietse M},
  journal={ESAIM: Mathematical Modelling and Numerical Analysis},
  volume={54},
  number={6},
  pages={2045--2067},
  year={2020},
  publisher={EDP Sciences}
}

@article{coulaud2013deflation,
  title={Deflation and augmentation techniques in {K}rylov subspace methods for the solution of linear systems},
  author={Coulaud, Olivier and Giraud, Luc and Ramet, Pierre and Vasseur, Xavier},
  journal={arXiv preprint arXiv:1303.5692},
  year={2013}
}

@article{boon2022robust,
  title={Robust Monolithic Solvers for the {S}tokes--{D}arcy Problem with the {D}arcy Equation in Primal Form},
  author={Boon, Wietse M and Koch, Timo and Kuchta, Miroslav and Mardal, Kent-Andre},
  journal={SIAM Journal on Scientific Computing},
  volume={44},
  number={4},
  pages={B1148--B1174},
  year={2022},
  publisher={SIAM}
}

@inproceedings{budivsa2022rational,
  title={Rational approximation preconditioners for multiphysics problems},
  author={Budi{\v{s}}a, Ana and Hu, Xiaozhe and Kuchta, Miroslav and Mardal, Kent-Andr{\'e} and Zikatanov, Ludmil},
  booktitle={International Conference on Numerical Methods and Applications},
  pages={100--113},
  year={2022},
  organization={Springer}
}

@article{adler2024improving,
  title={Improving Greedy Algorithms for Rational Approximation},
  author={Adler, James H and Hu, Xiaozhe and Wang, Xue and Xue, Zhongqin},
  journal={arXiv preprint arXiv:2407.15775},
  year={2024}
}

@Article{Hernandez:2005:SSF,
   author  = "Vicente Hernandez and Jose E. Roman and Vicente Vidal",
   title   = "{SLEPc}: A scalable and flexible toolkit for the solution of eigenvalue problems",
   journal = "ACM Trans. Math. Software",
   volume  = "31",
   number  = "3",
   pages   = "351--362",
   year    = "2005"
}

@inproceedings{kuchta2020assembly,
  title={Assembly of multiscale linear {PDE} operators},
  author={Kuchta, Miroslav},
  booktitle={Numerical Mathematics and Advanced Applications ENUMATH 2019: European Conference, Egmond aan Zee, The Netherlands, September 30-October 4},
  pages={641--650},
  year={2020},
  organization={Springer}
}

@book{logg2012automated,
  title={Automated solution of differential equations by the finite element method: The FEniCS book},
  author={Logg, Anders and Mardal, Kent-Andre and Wells, Garth},
  volume={84},
  year={2012},
  publisher={Springer Science \& Business Media}
}

@article{Beavers1967BoundaryCA,
  title={Boundary conditions at a naturally permeable wall},
  author={Gordon S. Beavers and DanielD. Joseph},
  journal={J. Fluid Mech.},
  year={1967},
  volume={30},
  pages={197 - 207}
}

@article{Saffman1971On,
  title={On the Boundary Condition at the Surface of a Porous Medium},
  author={P.G. Saffman},
  journal={Stud. Appl. Math.},
  year={1971},
  volume={50},
  pages={93-101}
}

@techreport{discacciati2004domain,
  title={Domain decomposition methods for the coupling of surface and groundwater flows},
  author={Discacciati, Marco},
  year={2004},
  institution={EPFL}
}

@article{gatica2011analysis,
  title={Analysis of fully-mixed finite element methods for the {S}tokes-{D}arcy coupled problem},
  author={Gatica, Gabriel and Oyarz{\'u}a, Ricardo and Sayas, Francisco-Javier},
  journal={Mathematics of Computation},
  volume={80},
  number={276},
  pages={1911--1948},
  year={2011}
}

@article{layton2002coupling,
  title={Coupling fluid flow with porous media flow},
  author={Layton, William J and Schieweck, Friedhelm and Yotov, Ivan},
  journal={SIAM Journal on Numerical Analysis},
  volume={40},
  number={6},
  pages={2195--2218},
  year={2002},
  publisher={SIAM}
}

@article{discacciati2007robin,
  title={Robin--{R}obin domain decomposition methods for the {S}tokes--{D}arcy coupling},
  author={Discacciati, Marco and Quarteroni, Alfio and Valli, Alberto},
  journal={SIAM Journal on Numerical Analysis},
  volume={45},
  number={3},
  pages={1246--1268},
  year={2007},
  publisher={SIAM}
}

@article{vassilev2014domain,
  title={Domain decomposition for coupled {S}tokes and {D}arcy flows},
  author={Vassilev, Danail and Wang, ChangQing and Yotov, Ivan},
  journal={Computer Methods in Applied Mechanics and Engineering},
  volume={268},
  pages={264--283},
  year={2014},
  publisher={Elsevier}
}

@inproceedings{fetzer2017conditions,
  title={On the conditions for coupling free flow and porous-medium flow in a finite volume framework},
  author={Fetzer, Thomas and Gr{\"u}ninger, Christoph and Flemisch, Bernd and Helmig, Rainer},
  booktitle={Finite Volumes for Complex Applications VIII-Hyperbolic, Elliptic and Parabolic Problems: FVCA 8, Lille, France, June 2017 8},
  pages={347--356},
  year={2017},
  organization={Springer}
}

@article{masson2016coupling,
  title={Coupling compositional liquid gas {D}arcy and free gas flows at porous and free-flow domains interface},
  author={Masson, Roland and Trenty, Laurent and Zhang, Yumeng},
  journal={Journal of Computational Physics},
  volume={321},
  pages={708--728},
  year={2016},
  publisher={Elsevier}
}

@article{schneider2020coupling,
  title={Coupling staggered-grid and {MPFA} finite volume methods for free flow/porous-medium flow problems},
  author={Schneider, Martin and Weishaupt, Kilian and Gl{\"a}ser, Dennis and Boon, Wietse M and Helmig, Rainer},
  journal={Journal of Computational Physics},
  volume={401},
  pages={109012},
  year={2020},
  publisher={Elsevier}
}

@article{rui2009unified,
  title={A unified stabilized mixed finite element method for coupling {S}tokes and {D}arcy flows},
  author={Rui, Hongxing and Zhang, Ran},
  journal={Computer Methods in Applied Mechanics and Engineering},
  volume={198},
  number={33-36},
  pages={2692--2699},
  year={2009},
  publisher={Elsevier}
}

@article{mu2007two,
  title={A two-grid method of a mixed {S}tokes--{D}arcy model for coupling fluid flow with porous media flow},
  author={Mu, Mo and Xu, Jinchao},
  journal={SIAM journal on numerical analysis},
  volume={45},
  number={5},
  pages={1801--1813},
  year={2007},
  publisher={SIAM}
}

@article{shiue2018convergence,
  title={Convergence of the {MAC} scheme for the {S}tokes/{D}arcy coupling problem},
  author={Shiue, Ming-Cheng and Ong, Kian Chuan and Lai, Ming-Chih},
  journal={Journal of Scientific Computing},
  volume={76},
  number={2},
  pages={1216--1251},
  year={2018},
  publisher={Springer}
}

@article{greif2023block,
  title={Block Preconditioners for the Marker-and-Cell Discretization of the {S}tokes--{D}arcy Equations},
  author={Greif, Chen and He, Yunhui},
  journal={SIAM Journal on Matrix Analysis and Applications},
  volume={44},
  number={4},
  pages={1540--1565},
  year={2023},
  publisher={SIAM}
}

@article{rui2020mac,
  title={A {MAC} scheme for coupled {S}tokes--{D}arcy equations on non-uniform grids},
  author={Rui, Hongxing and Sun, Yue},
  journal={Journal of Scientific Computing},
  volume={82},
  pages={1--29},
  year={2020},
  publisher={Springer}
}

@article{Kent2011Preconditioning,
   author = {K.-A. Mardal and R. Winther},
   title = {{Preconditioning discretizations of systems of partial differential equations}},
   journal = {Numerical Linear Algebra with Applications},
   volume = {18},
   number = {},
   pages = {},
   year = {2011},
   type = {Journal Article}
}

@article{Holter2020robust,
  title={Robust preconditioning of monolithically coupled multiphysics problems},
  author={Holter, Karl Erik and Kuchta, Miroslav and Mardal, Kent-Andre},
  journal={arXiv preprint arXiv:2001.05527},
  year={2020}
}

@article{brezzi1974existence,
  title={On the existence, uniqueness and approximation of saddle-point problems arising from {L}agrangian multipliers},
  author={Brezzi, Franco},
  journal={Publications des s{\'e}minaires de math{\'e}matiques et informatique de Rennes},
  number={S4},
  pages={1--26},
  year={1974}
}

@article{juan2007nonmathcing,
  author  = {Juan Galvis and Marcus Sarkis},
  title   = {Non-matching mortar discretization analysis for the coupling {S}tokes-{D}arcy equations},
  journal = {Electron. Trans. Numer. Anal.},
  volume  = {26},
  year    = {2007},
  pages   = {350--384},
}

@book{Greenbaum1997Iterative,
author = {Greenbaum, Anne},
title = {Iterative Methods for Solving Linear Systems},
publisher = {Society for Industrial and Applied Mathematics},
year = {1997},
}

@inproceedings{simoncini2013superlinear,
  title={On the superlinear convergence of {MINRES}},
  author={Simoncini, Valeria and Szyld, Daniel B},
  booktitle={Numerical Mathematics and Advanced Applications 2011: Proceedings of ENUMATH 2011, the 9th European Conference on Numerical Mathematics and Advanced Applications, Leicester, September 2011},
  pages={733--740},
  year={2013},
  organization={Springer}
}

@article{morgan1991computing,
  title={Computing interior eigenvalues of large matrices},
  author={Morgan, Ronald B},
  journal={Linear Algebra and its Applications},
  volume={154},
  pages={289--309},
  year={1991},
  publisher={Elsevier}
}

@article{paige1995approximate,
  title={Approximate solutions and eigenvalue bounds from {K}rylov subspaces},
  author={Paige, Chris C and Parlett, Beresford N and Van der Vorst, Henk A},
  journal={Numerical linear algebra with applications},
  volume={2},
  number={2},
  pages={115--133},
  year={1995},
  publisher={Wiley Online Library}
}

@article{Dumitrasc_2024,
   title={Deflation for the Off-Diagonal Block in Symmetric Saddle Point Systems},
   volume={45},
   number={1},
   journal={SIAM Journal on Matrix Analysis and Applications},
   publisher={Society for Industrial & Applied Mathematics (SIAM)},
   author={Dumitrasc, Andrei and Kruse, Carola and Rüde, Ulrich},
   year={2024},
    pages={203–231}
}

@book{barrett1994templates,
  title={Templates for the solution of linear systems: building blocks for iterative methods},
  author={Barrett, Richard and Berry, Michael and Chan, Tony F and Demmel, James and Donato, June and Dongarra, Jack and Eijkhout, Victor and Pozo, Roldan and Romine, Charles and Van der Vorst, Henk},
  year={1994},
  publisher={SIAM}
}

@book{saad2003iterative,
  title={Iterative methods for sparse linear systems},
  author={Saad, Yousef},
  year={2003},
  publisher={SIAM}
}

@article{nielsen2013analysis,
  title={Analysis of the minimal residual method applied to ill posed optimality systems},
  author={Nielsen, Bj{\o}rn Fredrik and Mardal, Kent-Andre},
  journal={SIAM Journal on Scientific Computing},
  volume={35},
  number={2},
  pages={A785--A814},
  year={2013},
  publisher={SIAM}
}

@article{bukavc2023analysis,
  title={Analysis of a diffuse interface method for the {S}tokes-{D}arcy coupled problem},
  author={Buka{\v{c}}, Martina and Muha, Boris and Salgado, Abner J},
  journal={ESAIM: Mathematical Modelling and Numerical Analysis},
  volume={57},
  number={5},
  pages={2623--2658},
  year={2023},
  publisher={EDP Sciences}
}

@misc{Strohbeck2024Efficient,
      title={{Efficient preconditioners for coupled Stokes-Darcy problems with MAC scheme: Spectral analysis and numerical study}}, 
      author={Paula Strohbeck and Iryna Rybak},
      year={2024},
      eprint={2404.18639},
      archivePrefix={arXiv},
      primaryClass={math.NA},
      url={https://arxiv.org/abs/2404.18639}, 
}

@InProceedings{Strohbeck2023Robust,
author="Strohbeck, Paula
and Riethm{\"u}ller, Cedric
and G{\"o}ddeke, Dominik
and Rybak, Iryna",
title="Robust and Efficient Preconditioners for {S}tokes--{D}arcy Problems",
booktitle="Finite Volumes for Complex Applications X---Volume 1, Elliptic and Parabolic Problems",
year="2023",
publisher="Springer Nature Switzerland",
address="Cham",
pages="375--383",
isbn="978-3-031-40864-9"
}

@article{CaucaoGaticaOyarzúaŠebestová,
url = {https://doi.org/10.1515/jnma-2015-0121},
title = {{A fully-mixed finite element method for the Navier–Stokes/Darcy coupled problem with nonlinear viscosity}},
title = {},
author = {Sergio Caucao and Gabriel N. Gatica and Ricardo Oyarzúa and Ivana Šebestová},
pages = {55--88},
volume = {25},
number = {2},
journal = {Journal of Numerical Mathematics},
doi = {doi:10.1515/jnma-2015-0121},
year = {2017},
}

@article{CAUCAO2017943,
title = {{A posteriori error analysis of a fully-mixed formulation for the Navier–Stokes/Darcy coupled problem with nonlinear viscosity}},
journal = {Computer Methods in Applied Mechanics and Engineering},
volume = {315},
pages = {943-971},
year = {2017},
issn = {0045-7825},
doi = {https://doi.org/10.1016/j.cma.2016.11.035},
url = {https://www.sciencedirect.com/science/article/pii/S0045782516307939},
author = {Sergio Caucao and Gabriel N. Gatica and Ricardo Oyarzúa},
}

@article{sylvester1990dirichlet,
  title={The Dirichlet to Neumann map and applications},
  author={Sylvester, John and Uhlmann, Gunther},
  journal={Inverse problems in partial differential equations},
  volume={42},
  pages={101},
  year={1990},
  publisher={Siam Philadelphia}
}

\end{document}